\documentclass{amsart}
\usepackage{amsmath}
\usepackage[T1]{fontenc}
\usepackage{textcomp} 
\DeclareEncodingSubset{TS1}{hlh}{1}  

\usepackage{amscd,amssymb,epsfig, epsf,pinlabel,graphicx}
\usepackage{epic,eepic}
\usepackage[dvipsnames]{xcolor}

\usepackage[all]{xy}
\SelectTips{cm}{}
\allowdisplaybreaks

\numberwithin{equation}{subsection}

\newtheorem{propo}{Proposition}[section]
\newtheorem{corol}[propo]{Corollary}
\newtheorem{theor}[propo]{Theorem}
\newtheorem{lemma}[propo]{Lemma}

\theoremstyle{definition}

\theoremstyle{remark}

\let\oldmarginpar\marginpar
\renewcommand\marginpar[1]{\oldmarginpar{\footnotesize #1}}

\newcommand{\CC}{\mathbb{C}}

\newcommand{\Int}{\operatorname{Int}}

\newcommand{\card}{\operatorname{card}}

\begin{document}

\title[Quasi-Lie bialgebras of loops in  quasi-surfaces]{Quasi-Lie bialgebras of loops in  quasi-surfaces}

    \author[Vladimir Turaev]{Vladimir Turaev}
    \address{
    Vladimir Turaev \newline
    \indent   Department of Mathematics \newline
    \indent  Indiana University \newline
    \indent Bloomington IN47405, USA\newline
    \indent and  \newline
    \indent IRMA, Strasbourg \newline
    \indent 7 rue Rene Descartes \newline
    \indent 67084 Strasbourg, France \newline
    \indent $\mathtt{vturaev@yahoo.com}$}

\begin{abstract}     We  discuss natural operations on   loops in a quasi-surface and  show that these operations define a structure  of a quasi-Lie bialgebra
in the module  
   generated by the set of     free homotopy classes of non-contractible loops.  
\end{abstract}


\maketitle

\keywords {{\bf Keywords:} surfaces, quasi-surfaces, loops, brackets, cobrackets}

\section {Introduction}

The module   generated by  free homotopy classes of loops in an oriented surface  carries a natural Lie bracket introduced by  W. Goldman 
\cite{Go1}, \cite{Go2} and  closely related to the   Poisson brackets on the moduli spaces of the surface,   see \cite{AB}, \cite{Wo},  \cite{FR}.
 Goldman's Lie bracket is  complemented  by a natural Lie cobracket so that together they form a Lie bialgebra,   see \cite{Tu2}.  Here we study similar operations on   loops in  more general topological spaces called quasi-surfaces.  This leads us to an algebraic notion  of a quasi-Lie  algebra, a dual notion of a quasi-Lie coalgebra, and a self-dual notion of a quasi-Lie bialgebra. We show that our operations on loops in quasi-surfaces satisfy the axioms of a quasi-Lie   bialgebra.
  
  The    idea  behind the study of quasi-surfaces   is that a    topological  space containing an oriented   surface with boundary~$\Sigma$ -   which is separated  from the rest of the space by  segments in $\partial \Sigma$  -  must  inherit   certain   features of~$\Sigma$.
Formally, a  quasi-surface~$X$  is   obtained by gluing~$\Sigma$  to an arbitrary  topological space~$Y$ along a mapping of several disjoint subsegments of  $\partial  \Sigma$,  called the gates,  to~$Y$. The space~$X$  splits as a union of the \lq\lq surface core"~$\Sigma$ and the \lq\lq singular part"~$Y$ which meet   at the gates.  
Considering loops in~$X$ and their intersections in~$\Sigma$, we obtain a bracket     in the module $M=M(X)$ 
  freely  generated by the set of     free homotopy classes of  loops  in~$X$, see    \cite{Tu3}. This bracket  is skew-symmetric but 
 may  not satisfy the Jacobi identity.  Here we compute its Jacobiator      in terms  of    operations on loops  associated with   the gates.    Similarly, considering  self-intersections of loops, we obtain    a skew-symmetric   cobracket  in~$M$.  We compute its co-Jacobiator    and   coboundary    in terms of     the gates. These computations show  that our operations  on loops induce a quasi-Lie bialgebra structure in the quotient of the module~$M$ by the homotopy class of contractible loops in~$X$.
  For    loops      in the surface core~$\Sigma$   of~$X$,  we recover   the standard    Lie bialgebra of  loops in~$\Sigma$.

A major source of quasi-surfaces are finite families of disjoint segments in (ordinary) surfaces.
Consider such a family~$C$ in an oriented surface~$\Sigma$. Assume that $C\cap \partial \Sigma =\partial C\neq \emptyset$ and that~$C$ splits~$\Sigma$ into two surfaces   meeting  at~$C$.   Taking one of them as the   surface core,  the other one as the singular part, and the components of~$C$ as the gates, we turn~$\Sigma$ into a  quasi-surface.  Our operations on loops   yield  then a quasi-Lie bialgebra structure in $M(\Sigma)$ depending on~$C$.
Further examples of  quasi-surfaces    may be produced by collapsing   several segments in $\partial 
\Sigma$ into a single point which plays  the role of the  singular part.
 

Other known operations on  loops in  surfaces,  see  \cite{KK}, \cite{KK2}, \cite{MTnew},   can also  be generalized to  quasi-surfaces. The author plans to discuss these generalizations  elsewhere.

The paper starts with the definitions of  quasi-Lie algebras, coalgebras, and bialgebras (Section~\ref{brbrr}). Then we discuss   gate operations on loops in  topological spaces (Section~\ref{A topological example})  and formulate our main theorems (Section~\ref{A topological example2}). The rest of the paper is devoted to the proof of these theorems.


This work was  supported by the NSF grant DMS-1664358.

\section{Brackets, cobrackets, and bi-endomorphisms}\label{brbrr}
 
 \subsection{Brackets}\label{topolsssett}\label{topddolsett}
Throughout the paper we fix a commutative   ring~$R$. By a module we mean an $R$-module. For a module~$M$ and an integer $m\geq 1$, we let $M^{  m}$  be  the direct product  of~$m$ copies of~$M$.   An  \emph{$m$-bracket} in~$M$ is a  map $ M^{ m} \to M$ which is linear in all~$m$ variables. 
A  bracket  in~$M$   is     \emph{fully symmetric}  if it is invariant under all permutations of the variables.  A bracket $ \mu:M^{ m} \to M$ is \emph{cyclically symmetric} if $\mu(x_1,..., x_m)=\mu (x_2,..., x_m, x_1)$ for all $x_1,..., x_m\in M$. 
Given  a 2-bracket $\mu:M^{ 2} \to M$, its \emph{transpose} $\mu^t$ is the 2-bracket  in~$M$  defined by     $\mu^t(x, y)=\mu(y,x)$    for all $x, y \in M$.  The \emph{Jacobiator} $J_\mu $   of~$\mu$ is the cyclically symmetric 3-bracket   in~$M$ 
  defined by
$$ J_\mu (x,y,z)= \mu(\mu(x,y),z ) +  \mu(\mu(y,z), x)+  \mu(\mu(z,x), y)$$
for all  $x,y,z\in M$. Note  the identity  
$$ J_{\mu^t} (x,y,z)=\mu(z, \mu(y,x))+ \mu(x , \mu(z,y)) +\mu(y,\mu(x,z))  .  $$
A 2-bracket~$\mu$  in~$M$ is \emph{symmetric} (respectively, \emph{skew-symmetric})  if $\mu^t=\mu $ (respectively, $\mu^t=-\mu $). 
In both cases $J_{\mu^t}=J_\mu$.  
 
 \subsection{Quasi-Lie algebras}\label{Bracketsssy modbbvules}   
   A \emph{quasi-Lie algebra}  is  a  module~$M$  endowed with a  skew-symmetric 2-bracket $[-,-]$  and   a cyclically symmetric 3-bracket
$[-,-,-]$  such that the Jacobiator  of $[-,-]$ is obtained by antisymmetrization of    $[-,-,-]$, i.e.,  \begin{equation}\label{JJJacoee}   J_{[-,-]}(x,y,z)= [x,y,z]- [ z,y,x] \end{equation} 
for all $x,y,z \in M$.  Such a pair $([-,-], [-,-,-])$ is called  a \emph{quasi-Lie pair of brackets} in~$M$. We recover the usual Lie algebras   when  $[-,-,-]=0$.

 The following  lemma is our main tool producing quasi-Lie  algebras.
  
  \begin{lemma}\label{sDDfffDtrbucte}    
  For any module~$M$ and any bilinear form $M^{ 2} \to M, (x,y)\mapsto x\bullet y$,  the 2-bracket  $[x,y]=x\bullet  y-y\bullet  x$ and the 3-bracket 
$$ [x,y,z]=J_\bullet (x,y,z)+ J_{\bullet^t} (x,y,z)  $$
$$=(x\bullet y)\bullet z +(y\bullet z)\bullet x+ (z\bullet x)\bullet  y + z\bullet (y\bullet x) +  x\bullet (z\bullet y) + y\bullet (x\bullet z)  $$
form a  quasi-Lie pair of brackets  in~$M$. 
   \end{lemma}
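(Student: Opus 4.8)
The plan is to set $\mu(x,y)=x\bullet y$, so that $\mu$ is the given bilinear form, $[x,y]=\mu(x,y)-\mu^{t}(x,y)$, and $[-,-,-]=J_{\bullet}+J_{\bullet^{t}}$. Two of the three required facts are immediate: $[-,-]$ is skew-symmetric by construction, and $[-,-,-]$ is cyclically symmetric because $J_{\mu}$ and $J_{\mu^{t}}$ are cyclically symmetric $3$-brackets, as recorded in Section~\ref{topddolsett}. So the only thing to prove is identity~\eqref{JJJacoee}, i.e.\ that $J_{[-,-]}(x,y,z)=[x,y,z]-[z,y,x]$ for all $x,y,z\in M$.

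To prove this, I would first expand a single summand of $J_{[-,-]}$. From $[x,y]=x\bullet y-y\bullet x$ one gets
$$[\,[x,y],z\,]=\mu(\mu(x,y),z)-\mu(\mu(y,x),z)-\mu(z,\mu(x,y))+\mu(z,\mu(y,x)).$$
Summing over the three cyclic permutations of $(x,y,z)$ produces twelve trilinear terms, which I would group into four cyclic sums and identify each one. Two are recognized directly: $\sum_{\mathrm{cyc}}\mu(\mu(x,y),z)=J_{\mu}(x,y,z)$ is the definition of the Jacobiator, and $\sum_{\mathrm{cyc}}\mu(z,\mu(y,x))=J_{\mu^{t}}(x,y,z)$ is precisely the identity for $J_{\mu^{t}}$ stated in Section~\ref{topddolsett}. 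For the other two, the key observation is a reindexing: the three terms of $\sum_{\mathrm{cyc}}\mu(\mu(y,x),z)$ are exactly the three terms of $J_{\mu}(z,y,x)$ (read off from the definition of $J_{\mu}$ evaluated at $(z,y,x)$), and the three terms of $\sum_{\mathrm{cyc}}\mu(z,\mu(x,y))$ are exactly the three terms of $J_{\mu^{t}}(z,y,x)$ (read off from the $J_{\mu^{t}}$-identity evaluated at $(z,y,x)$). Assembling the four evaluations with their signs gives
$$J_{[-,-]}(x,y,z)=\bigl(J_{\mu}(x,y,z)+J_{\mu^{t}}(x,y,z)\bigr)-\bigl(J_{\mu}(z,y,x)+J_{\mu^{t}}(z,y,x)\bigr)=[x,y,z]-[z,y,x],$$
which is~\eqref{JJJacoee}.

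There is essentially no real obstacle here beyond careful bookkeeping with twelve trilinear terms and their signs; the only mildly clever ingredients are the two reindexings that identify $\sum_{\mathrm{cyc}}\mu(\mu(y,x),z)$ with $J_{\mu}(z,y,x)$ and $\sum_{\mathrm{cyc}}\mu(z,\mu(x,y))$ with $J_{\mu^{t}}(z,y,x)$, and both become mechanical once the cyclic orbit of $(x,y,z)$ is written out explicitly. If one wished to bypass them, the alternative is simply to expand both $J_{[-,-]}(x,y,z)$ and $[x,y,z]-[z,y,x]$ into their twelve summands and match them term by term; this is routine but less illuminating. In either presentation, combining the computation of $J_{[-,-]}$ with the already-noted skew-symmetry of $[-,-]$ and cyclic symmetry of $[-,-,-]$ completes the verification that $([-,-],[-,-,-])$ is a quasi-Lie pair of brackets in $M$.
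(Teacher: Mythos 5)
Your proposal is correct and follows essentially the same route as the paper: expand the three cyclic summands of $J_{[-,-]}$ into twelve trilinear terms and regroup them as $J_\bullet(x,y,z)-J_\bullet(z,y,x)+J_{\bullet^t}(x,y,z)-J_{\bullet^t}(z,y,x)=[x,y,z]-[z,y,x]$. The reindexings you single out are exactly the ones implicit in the paper's term-by-term summation.
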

  
\begin{proof}   That the 2-bracket $[-,-]$ is  skew-symmetric  and   the 3-bracket
$[-,-,-]$ is cyclically symmetric is clear. Direct computations show that
$$[[x,y],z] =[x \bullet  y-y\bullet  x, z]= (x\bullet  y) \bullet z-(y\bullet x)\bullet z -z\bullet  (x\bullet y)+z\bullet  (y\bullet  x),$$
$$[[y,z], x]=[y\bullet  z-z\bullet  y, x]= (y\bullet  z) \bullet  x- (z\bullet  y) \bullet x -x\bullet  (y\bullet z)+ x\bullet (z\bullet  y), $$
$$ [[z,x], y]= [z\bullet  x-x\bullet z, y]= (z\bullet x)\bullet y-(x\bullet z)\bullet y -y\bullet (z\bullet  x)+y\bullet (x\bullet z).$$
Summing up, we get 
$$ J_{[-,-]}(x,y,z)=  J_\bullet (x,y,z)- J_\bullet (z,y,x) - J_{\bullet^t} (z, y,x) +J_{\bullet^t} (x,y,z) $$
$$ = [x,y,z]- [ z,y,x]. $$
 \end{proof}

 \subsection{Remarks} 1. Given a quasi-Lie pair of brackets $([-,-], [-,-,-])$  in a module~$M$ and a fully symmetric 3-bracket~$u $  in~$M$,  the pair $([-,-], [-,-,-] - u)$ is also a quasi-Lie pair of brackets in~$M$. For example, taking in Lemma~\ref{sDDfffDtrbucte}    
$$u(x,y,z)=J_{\bullet^t} (x,y,z) +J_{\bullet^t} (z,y,x)$$
we deduce that  the 2-bracket  $[x,y]=x\bullet  y-y\bullet  x$ and the 3-bracket carrying any triple $(x,y,z)\in M^3$ to 
$$  (x\bullet y)\bullet z - x\bullet (y\bullet z)  +(y\bullet z)\bullet x -  y\bullet (z\bullet x) + (z\bullet x)\bullet  y     -  z\bullet (x\bullet y)$$
form a  quasi-Lie pair of brackets in~$M$.  This  shows that   quasi-Lie algebras naturally arise in the study of non-associative multiplications. 

2. 
A  quasi-Lie pair of brackets    in a module~$M $  gives rise to  a fully symmetric 3-bracket $s:M^{  3}\to M$    by
$$ s(x,y,z)= [x,y,z]  +[z,y,x]  = 2 [x,y,z] - J_{[-,-]}(x,y,z)$$
for any $x,y,z\in M$.  
If 2 is invertible in~$R$, then   we can recover the 3-bracket $[-,-,-]$ from $[-,-]$  and~$s$.    
 This yields a bijective correspondence between  quasi-Lie pairs of brackets in~$M$  and  pairs
(a skew-symmetric 2-bracket in~$M$, a fully symmetric 3-bracket in~$M$).

\subsection{Cobrackets and quasi-Lie coalgebras}\label{+algprelcobr} To discuss the dual notions of  cobrackets  and quasi-Lie coalgebras we use the language of tensor products.
For a module~$M$ and an integer $m\geq 1$, we let $M^{\otimes m}$  be  the tensor product over~$R$  of~$m$ copies of~$M$.  It is clear that $m$-brackets in~$M$ bijectively correspond to linear maps  $M^{\otimes m}  \to M$. An   \emph{$m$-cobracket} in~$M$ is a linear map $M\to  M^{\otimes m}  $. A cobracket $ \nu:M\to M^{\otimes m} $ is \emph{cyclically symmetric} if $\nu=Q_m \circ \nu$ where $ Q_m$ is   the linear automorphism of $M^{\otimes m}$ defined by  $Q_m(x\otimes y)= y\otimes x$ for all $x\in M, y\in M^{\otimes (m-1)}$.
 For a 2-cobracket $\nu  : M\to M^{\otimes 2} $, we denote by $\nu^2$  the 3-cobracket $(\nu \otimes {\text {id}}_M) \circ \nu: M\to M^{\otimes 3} $.  The \emph{co-Jacobiator} $j_\nu $   of~$\nu$ is the cyclically symmetric 3-cobracket  $$ j_\nu = ( I +Q+Q^2) \circ \nu^2 : M\to M^{\otimes 3}$$
  where $I={\text {id}}_{M^{\otimes 3}}$ and  $Q=Q_3$.
A 2-cobracket $\nu $      is \emph{symmetric} (respectively, \emph{skew-symmetric})  if $P\circ\nu =  \nu$ (respectively, $P\circ\nu=-   \nu$) where  $P=Q_2$ is     the permutation automorphism of $ M^{\otimes 2}$
 defined by  $P(x\otimes y)= y\otimes x$ for all $x,y\in M $.

A \emph{quasi-Lie coalgebra}  is  a  module~$M$  carrying  a  skew-symmetric 2-cobracket~$\nu$  and   a cyclically symmetric 3-cobracket
$\gamma: M \to M^{\otimes 3}$  such that  
$$ j_\nu=  E \circ  \gamma: M \to M^{\otimes 3} $$
where  $E$ is the endomorphism of  $M^{\otimes 3}$  defined by  \begin{equation}\label{HHH}E(x\otimes y \otimes z)=x\otimes y \otimes z- z\otimes y \otimes x \end{equation}
for all $x,y,z \in M$.   The pair $(\nu, \gamma)$ is called then a \emph{quasi-Lie pair of cobrackets} in~$M$. We recover the usual Lie coalgebras   for $\gamma=0$.

\subsection{Bi-endomorphisms and quasi-Lie bialgebras}\label{+algprelcobssar} By  a \emph{bi-endomorphism} of a module~$M$ we mean a linear endomorphism of the module $M^{\otimes 2}$. For instance, the permutation automorphism~$P$ of $ M^{\otimes 2}$, see Section~\ref{+algprelcobr},    and  
  $\overline P={\text {id}}_{M^{\otimes 2}} -P $ are bi-endomorphisms of~$M$. 
  We say that a bi-endomorphism~$\zeta$   of~$M$    is \emph{equivariant} if  $P\zeta  =\zeta P$.   Examples:  $P$ and~$ \overline P$  are equivariant;   for each bi-endomorphism~$\zeta$ of~$M$, the  bi-endomorphism  $  \zeta^{eq}=\zeta+ P   \zeta   P $  is  equivariant.
We say that a bi-endomorphism~$\zeta$   of~$M$    is \emph{skew-symmetric} if  $P\zeta  =\zeta P=-\zeta$.  
    Examples:  $ \overline P$   is skew-symmetric; for each bi-endomorphism~$\zeta$ of~$M$, the  composition   $ \delta (\zeta)= \overline P   \zeta   \overline P $  is skew-symmetric. 
    Note that  \begin{equation}\label{equis} \delta (\zeta^{eq})=2\,  \delta (\zeta). \end{equation}
    
  
  

  
    A  bracket $[-,-]$  and  a   2-cobracket $\nu$ in~$M$ determine a bi-endomorphism $\partial \nu$ of~$M$,  the  \emph{coboundary  of~$\nu$}.  Note  that any    $z\in M$ yields a linear map $ad_z: M^{\otimes 2}\to M^{\otimes 2}$   carrying $ x \otimes y$ to $[z,x] \otimes y+ x \otimes [z,y]$ for any $x,y \in M$. By definition, $$\partial \nu (x \otimes y)= \nu ( [x,y])  - ad_x( \nu(y)) +   ad_y( \nu(x)) \in M^{\otimes 2}.$$
  If  $[-,-]$ and~$\nu$ are  skew-symmetric, then so is $\partial \nu$. We have  $\partial  \nu \circ P =- \partial  \nu$   because      $$\partial \nu (y \otimes x)= \nu ( [y,x])  - ad_y( \nu(x)) +   ad_x( \nu(y)) $$
$$=-\nu ( [x,y])  + ad_x( \nu(y)) -   ad_y( \nu(x))=-\partial \nu (x \otimes y) $$
for any    $x,y \in M$.
Also  $  P\circ \partial \nu  =- \partial  \nu$   because 
$$ P(\partial \nu (x \otimes y))= P \big (\nu ( [x,y]) - ad_x( \nu(y))+ ad_y( \nu(x)) \big ) $$ $$ =-\nu ( [x,y])  +ad_x(  \nu(y) )  -ad_y(  \nu(x) ) =  -\partial \nu (x \otimes y).$$


We define a quasi-Lie bialgebra to be  a module  endowed with a quasi-Lie pair of brackets $([-,-] , [-,-,-])$, a  quasi-Lie pair of cobrackets   $(\nu, \gamma)$, and an equivariant   bi-endomor\-phism~$ \zeta $
such that  $\partial \nu=\delta (\zeta)$. We leave it to the reader to verify that this  notion    of a quasi-Lie bialgebra is self-dual. For $[-,-,-]=0, \gamma=0,  \zeta=0$, we recover the usual   Lie bialgebras.


 \section{Gates and loops}\label{A topological example}
 
 We  define   gates in   topological spaces and   derive from the gates certain  algebraic operations on   homotopy classes of loops.
 

  \subsection{Preliminaries}\label{prel1}
  A \emph{loop} in a   topological space~$X$  is a continuous map $a:S^1\to X$ where the circle $S^1=\{p\in \CC\, \vert \, \vert p \vert=1\}$ is oriented counterclockwise. The point $a(1)\in X$ is   the \emph{base point} of~$a$.  For   $p\in   S^1$,  we let $a_p: S^1\to X$ be the  loop  obtained as the composition of~$a$ with the rotation $S^1\to S^1$ carrying $1\in S^1$ to~$p$.  
  

 Two loops $a,b:S^1 \to X$ are \emph{freely homotopic} if there is a continuous map $F:S^1\times [0,1]\to X$  such that $F(p,0)=a(p)$ and $F(p,1)=b(p)$ for all $p\in S^1$.  Such a map~$F$ is   a \emph{homotopy} between~$a$ and~$b$.
 
We   let $   {L} (X)$ denote    the set of free   homotopy classes of   loops in~$X$ and   let  $  M(X) $    be   the  free module with basis $ { L}    (X)$.  
For  a   loop~$a$ in~$X$, we let $\langle a \rangle \in  {L} (X) \subset  M(X) $ be the free homotopy class of~$a$. Set $\langle a \rangle_0=\langle a \rangle \in    {M}(X)$   if~$a$ is non-contractible and     $\langle a \rangle_0=0 \in {M}(X)$  if~$a$ is contractible.

 \subsection{Gates}\label{topolsett}\label{coordinateddd2w-}  
    A \emph{cylinder neighborhood} of a  subset~$C$ of a topological space~$X$  is   a  pair consisting of a closed set $U\subset X$  with $C \subset  {\rm {Int}} (U)$ and     a homeomorphism  $ U\approx C\times [-1,1]$    carrying  ${\rm {Int}} (U)$ onto  $C\times (-1, 1)$ and carrying each point $c\in C   $ to $(c,0)  $.    A  \emph{gate} in~$X$ is  a closed path-connected  subspace $C \subset X$ endowed with a cylinder neighborhood   and such that all loops in~$C$ are contractible in~$X$.  We  will identify the   cylinder neighborhood in question with $C\times [-1,1]$ via the given homeomorphism.     


 For a gate $C \subset X$, consider     the  map $H:X \to S^1 $  carrying  the complement of  $ C\times (-1,1) $  in~$X$ to $-1\in S^1$ and  carrying $C\times \{t\}  $ to ${\rm{exp} }(\pi i t)\in S^1$ for all $t\in [-1,1]$. 
We say that a  loop $a:S^1 \to X$ is \emph{transversal} to~$C $  if   the map $Ha: S^1 \to S^1$   is transversal to $1  \in S^1$. Then   the set $a^{-1}(C) =(Ha)^{-1}(1)$  is finite.   For each  $p\in a^{-1}(C)$,  we   define the {\emph {crossing sign}} $\varepsilon_p(a) $: if  at~$p$ the loop~$a$    goes  from $C\times [-1,0) $ to $C \times (0,1]$  then   $\varepsilon_p(a)=+1$, otherwise,  $\varepsilon_p(a)=-1$. The integer
$$ a\cdot C= \sum_{p\in a^{-1}(C)} \varepsilon_p(a)   $$
is   the   algebraic  intersection number of~$a$ and~$C$.   
Note  that the formula $a\mapsto a \cdot C$ defines a homomorphism $H_1(X)\to \mathbb {Z}$.      An example of a gate is provided by a   simply connected proper codimension~1    submanifold~$C$ of a manifold together with a suitable homeomorphism of a closed neighborhood of~$C$   onto $C\times [-1,1]$. In this case, $a\cdot C$ is the usual intersection number of a  loop~$a$ with~$C$.  
  
  \subsection{Gate brackets} \label{Gate brackets}  
We   derive  from a   gate $C \subset  X$  a sequence of brackets $\{\mu^m_C\}_{m\geq 1}$ in the module $ M =M(X) $. Fix a  point $\star \in C$. For a loop $a$ in~$X$ with   $a(1) \in  C$ we let $\widetilde a$ be the loop  based in~$\star$ and obtained from $a$ by conjugation along a   path   in~$C$   from~$\star$ to $a(1)$. The loop $\widetilde a$      represents an element of   $\pi_1(X, \star)$   independent of the choice of the latter path.
 Consider now $m \geq 1$   loops $a_1,..., a_m$ in~$X$   transversal to~$C$.   For   any $i=1,...,m$ and   $p_i\in a_i^{-1}(C) \subset S^1$,  consider the reparametrization $a_{i,p_i}=(a_i)_{p_i}$ of $a_i$  based at $a_i(p_i)\in C$, see  Section~\ref{prel1}.  Consider the associated loop $\widetilde a_{i,p_i} $  based at  $\star$.  Set
  \begin{equation}\label{twotwo3aa} \mu^m_C (  a_1  , \ldots ,   a_m ) =
 \sum_{p_1\in a_1^{-1}(C), \ldots, p_m\in a_m^{-1}(C)} \prod_{i=1}^m \varepsilon_{p_i}(a_i)\,  \langle \prod_{i=1}^m\widetilde  a_{i,p_i} \rangle \in M  .   \end{equation}

 \begin{lemma}\label{strleleucte}  $\mu^m_C (  a_1  , \ldots ,   a_m )  $ depends only on the free homotopy classes of  the loops $a_1,..., a_m$.
 \end{lemma}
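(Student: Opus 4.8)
The plan is to prove the a priori stronger (but clearly equivalent) statement that $\mu^m_C$ is unchanged under a free homotopy of a single one of its arguments; applying this to each argument in turn gives the lemma. So fix loops $a_2,\dots,a_m$ transversal to $C$, let $a_1$ and $b_1$ be freely homotopic loops transversal to $C$, and let $F\colon S^1\times[0,1]\to X$ be a homotopy from $a_1$ to $b_1$. Using the identification of the cylinder neighborhood of the gate with $C\times[-1,1]$ (and a transversality argument as in \cite{Tu3}), one may assume, after modifying $F$ rel $S^1\times\{0,1\}$, that
$$\Gamma:=F^{-1}(C)=(HF)^{-1}(1)$$
is a compact $1$-dimensional submanifold of $S^1\times[0,1]$ with $\partial\Gamma=\Gamma\cap\bigl(S^1\times\{0,1\}\bigr)=\bigl(a_1^{-1}(C)\times\{0\}\bigr)\sqcup\bigl(b_1^{-1}(C)\times\{1\}\bigr)$. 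Thus $\Gamma$ is a disjoint union of circles, which we discard, and arcs, and every arc is of one of three types: (i) both endpoints in $a_1^{-1}(C)\times\{0\}$; (ii) both endpoints in $b_1^{-1}(C)\times\{1\}$; (iii) one endpoint in each set. Since each boundary point of $\Gamma$ is an endpoint of exactly one arc, the type-(iii) arcs set up a bijection between the points of $a_1^{-1}(C)$ and the points of $b_1^{-1}(C)$ that do not lie on type-(i), resp. type-(ii), arcs.

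First I would establish the \emph{sliding principle}: the $\pi_1(X,\star)$-class attached to a point of $a_1^{-1}(C)\sqcup b_1^{-1}(C)$ as in Section~\ref{Gate brackets} is constant along each arc of $\Gamma$. Indeed, parametrize an arc as $s\mapsto\gamma(s)=(r(s),t(s))\in S^1\times[0,1]$, $s\in[0,1]$. Then $c_s:=F(\gamma(s))\in C$, the assignment $s\mapsto c_s$ is a path in $C$, and the loop $\ell_s:=\bigl(F(\cdot,t(s))\bigr)_{r(s)}$ of Section~\ref{prel1} is based at $c_s$ and depends continuously on $s$. Choosing a continuous family of paths $\pi_s$ in $C$ from $\star$ to $c_s$ (for instance $\pi_0$ arbitrary and, for $s>0$, $\pi_s$ the concatenation of $\pi_0$ with $u\mapsto c_u$, $u\le s$) and conjugating, we obtain a continuous family of loops based at $\star$, so the class $[\pi_s\,\ell_s\,\pi_s^{-1}]\in\pi_1(X,\star)$ is independent of $s$. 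Because all loops in $C$ are contractible in $X$, this class does not depend on the choice of the $\pi_s$, and at $s=0,1$ it recovers exactly the classes attached to the two endpoints of the arc. Hence: for a type-(iii) arc from $p\in a_1^{-1}(C)$ to $q\in b_1^{-1}(C)$ one has $\widetilde a_{1,p}=\widetilde b_{1,q}$ in $\pi_1(X,\star)$; for a type-(i) arc with endpoints $p,p'$ one has $\widetilde a_{1,p}=\widetilde a_{1,p'}$; and similarly for type (ii).

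Next comes the sign bookkeeping. Orient $\Gamma$ as the transversal preimage of $1\in S^1$ under $HF$, using the product orientation of $S^1\times[0,1]$ and the counterclockwise orientation of the target $S^1$. A direct computation in the coordinates of $C\times[-1,1]$ near the two ends shows that, up to one fixed global sign convention, the crossing sign $\varepsilon_p(a_1)$ of a point $p\in a_1^{-1}(C)\times\{0\}$ is $+1$ exactly when the oriented $\Gamma$ leaves the boundary at $p$, whereas the crossing sign $\varepsilon_q(b_1)$ of a point $q\in b_1^{-1}(C)\times\{1\}$ is $+1$ exactly when the oriented $\Gamma$ meets the boundary at $q$. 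Since an oriented arc leaves the boundary at one of its two endpoints and meets it at the other, it follows that the two endpoints of a type-(i) arc carry opposite crossing signs, $\varepsilon_p(a_1)=-\varepsilon_{p'}(a_1)$, and likewise for type (ii), while the two endpoints of a type-(iii) arc carry equal crossing signs, $\varepsilon_p(a_1)=\varepsilon_q(b_1)$.

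It remains to assemble the count. The free homotopy class $\langle\,\cdot\,\rangle$ of a product of loops based at $\star$ depends only on the $\pi_1(X,\star)$-classes of the factors. Fix $p_i\in a_i^{-1}(C)$ for $2\le i\le m$. By the sliding principle and the sign computation, the two terms of \eqref{twotwo3aa} indexed by the two endpoints of a type-(i) arc (with these fixed $p_2,\dots,p_m$) have the same loop-class and opposite total sign, so they cancel in $\mu^m_C(a_1,a_2,\dots,a_m)$; symmetrically the type-(ii) terms cancel in $\mu^m_C(b_1,a_2,\dots,a_m)$. The type-(iii) arcs match the surviving terms of $\mu^m_C(a_1,a_2,\dots,a_m)$ bijectively with those of $\mu^m_C(b_1,a_2,\dots,a_m)$, preserving sign and loop-class. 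Summing over all $(p_2,\dots,p_m)$ gives $\mu^m_C(a_1,a_2,\dots,a_m)=\mu^m_C(b_1,a_2,\dots,a_m)$, and iterating over the $m$ arguments proves the lemma; a small homotopy using the product structure $C\times[-1,1]$ also shows that every loop is freely homotopic to one transversal to $C$, so $\mu^m_C$ is in fact well defined on all of $L(X)^m$. I expect the main technical point to be the sign bookkeeping of the third paragraph — matching crossing signs along the arcs of $\Gamma$ — since it is where the orientation conventions on $X$, on the homotopy, and on the target circle must be reconciled; the gate hypothesis itself enters only through the sliding principle.
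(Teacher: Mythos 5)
Your proof is correct, but it is organized differently from the paper's. The paper decomposes a free homotopy between transversal loops into elementary moves of two types --- deformations within the class of transversal loops, and finger moves pushing a branch across $C$ that create a pair of crossings $p,p'$ with $\varepsilon_p=-\varepsilon_{p'}$ and $\widetilde a_{k,p}=\widetilde a_{k,p'}$ --- and checks invariance of the sum \eqref{twotwo3aa} move by move. You instead work globally: you make the entire homotopy $F$ generic, study the oriented $1$-manifold $\Gamma=F^{-1}(C)\subset S^1\times[0,1]$, prove a sliding principle for the $\pi_1(X,\star)$-classes along its arcs, and read off the sign relations from the orientation of $\Gamma$ as a transversal preimage. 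The two arguments encode the same geometry (slicing your generic $\Gamma$ by the height function recovers the paper's elementary moves, with finger moves occurring at critical points of arcs), and both rest on the same unproved genericity input --- the paper's claim that a homotopy splits into elementary moves is equivalent to your claim that $F$ can be perturbed rel endpoints so that $\Gamma$ is a neat $1$-submanifold, and both are justified by the product structure $C\times[-1,1]$ in exactly the way you indicate. What your version buys is that the sign bookkeeping and the matching of terms between $\mu^m_C(a_1,\dots)$ and $\mu^m_C(b_1,\dots)$ are made explicit in one pass via the classification of arcs into your types (i)--(iii), rather than being distributed over an induction on moves; the cost is the extra care needed to set up orientations on $\Gamma$, which you handle correctly. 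One small omission relative to the paper: the independence of $\mu^m_C$ from the choice of $\star\in C$, which the paper notes at the end of its proof; your sliding principle in fact contains the needed observation (path-connectedness of $C$ plus contractibility in $X$ of loops in $C$), so this is cosmetic.
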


\begin{proof} For any pair of  freely homotopic loops in~$ X$ transversal to~$C$,  there is a free homotopy between these loops  which splits as a product of several homotopies of the following two  types  (and   inverse homotopies):  (i)  deformations   in the class of loops transversal to~$C$ and (ii) deformations  pushing a branch of the loop lying in $X \setminus C$   across~$C$ and creating   two new crossings of the loop with~$C$. It is clear that homotopies of   $a_1,..., a_m$ of   type (i) do not change $\mu^m_C (  a_1  , \ldots ,   a_m )$. A homotopy of $a_k$ of  type  (ii)   with $k\in \{1,..., m\}$ creates two   crossings $p, p'$ such that  $\varepsilon_{p}(a_k)=-\varepsilon_{p'}(a_k)$ and  $\widetilde  a_{k,p}=\widetilde  a_{k,p'}$. As a consequence, the terms of the   sum \eqref{twotwo3aa} arising from $p_k=p$ and   $p_k=p'$ cancel   each other, so that the sum is preserved. It is  clear that $\mu^m_C (  a_1  , \ldots ,   a_m ) $ does not depend on the choice of the point $\star \in C$.
 \end{proof} 
 
Using the product structure in   the cylinder neighborhood   of~$C$,     we easily observe that  each loop in~$X$ is freely homotopic to  a loop   transversal to~$C$. Therefore Lemma~\ref{strleleucte}  yields a map $$({L}(X))^m \to M, \,\, (\langle a_1 \rangle,..., \langle a_m \rangle)\mapsto  \mu^m_C (  a_1  , \ldots ,   a_m ) .$$ This map extends uniquely to an  $m$-bracket $\mu^m_C$  in~$  M$. Since the loops $\prod_{i=1}^m\widetilde  a_{i,p_i}$ and $
  (\prod_{i=2}^m\widetilde  a_{i,p_i})\,  \widetilde  a_{1,p_1}  $  are freely homotopic,  this bracket    is cyclically symmetric for all~$m$. Clearly,  $\mu^1_C (  \langle a \rangle)= (a\cdot C) \langle a \rangle$ for any loop~$a$ in~$X$ transversal to~$C$.
  

   \subsection{Gate cobrackets}\label{CCCcobrackets} 
For a  gate $C$ in a topological space~$X$  and for any integer $m\geq 1$, we define  an  $m$-cobracket     in the module $M=M(X)$. The idea is to  consider  all splittings of a loop $a:S^1\to X$    as a product of~$m$ paths
with endpoints in~$C$ and   to sum up the associated elements  of $M^{\otimes m}$. 
To this end, for any   distinct points $p_1,p_2 \in S^1$, we consider the  path in~$X$ obtained by restricting~$a$ to the arc in $S^1$ which starts in~$p_1$ and goes counterclockwise  to~$p_2$.  If  $a(p_1), a(p_2 )
 \in C$, then the product of this path with a  path    from $a(p_2)$ to~$a(p_1) $  in~$C$ is a loop in~$X$ based at  $a(p_1)$. This loop is denoted  
$a_{p_1,p_2}$. Since~$C$ is path-connected and  all loops in~$C$ are contractible in~$X$, the  loop $a_{p_1,p_2}$  is well defined up to  homotopy.  
An \emph{$m$-sequence}    of the loop~$a $  is  a  sequence  of~$m$ distinct points
$p_1,..., p_m \in  a^{-1}(C)$ such that    moving along $S^1$ counterclockwise, we meet   $p_1,..., p_m$ in this cyclic order. 
 The set of all     $m$-sequences of~$a$ is denoted  $S_m(a)$. 
It  is nonempty if and only if $m\leq \card (a^{-1}(C))$. If~$a$ is  transversal  to~$C $, then 
the set $S_m(a)$ is finite and we set 
  \begin{equation}\label{tweeotwo3} \gamma_{C,m} (a)= \sum_{(p_1,..., p_m)  \in S_m(a)} \,  \prod_{i=1}^m\varepsilon_{p_i}(a) \, \bigotimes_{i=1}^m \, \langle a_{p_i, p_{i+1}} \rangle_0  \in M^{\otimes m}   \end{equation}  where   $p_{m+1}=p_1$.   Since all 
    cyclic permutations of  $m$-sequences of~$a$      are $m$-sequences of~$a$, the vector  $\gamma_{C,m} (a) $ is invariant under   cyclic permutations of  $M^{\otimes m}$.     In particular,      $\gamma_{C,1} (a)=(a\cdot C) \, \langle  a \rangle_0$  and  $$
 \gamma_{C,2} (a)= \sum_{(p_1,  p_2 )\in S_2(a) } \,    \varepsilon_{p_1}(a) \, \varepsilon_{p_2}(a)\,  \langle a_{p_1, p_2} \rangle_0  \otimes  \langle a_{p_2, p_1} \rangle_0 $$
where  $S_2(a)$ is the set of all ordered pairs of distinct elements of $a^{-1}(C) \subset S^1$. 
 
  \begin{lemma}\label{svvvvtrbucte}  For all  $m \geq 1$, Formula \eqref{tweeotwo3} defines a  map $L (X)\to M^{\otimes m}$.
\end{lemma}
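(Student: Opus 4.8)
The plan is to show that $\gamma_{C,m}(a)$, which a priori depends on the loop $a$ (chosen transversal to $C$), depends only on the free homotopy class $\langle a \rangle \in L(X)$; combined with the observation (already used for the gate brackets) that every loop in $X$ is freely homotopic to one transversal to $C$, this gives a well-defined map $L(X) \to M^{\otimes m}$. The argument will parallel the proof of Lemma~\ref{strleleucte}. First I would recall the standard fact that any free homotopy between two loops transversal to $C$ can be decomposed into a finite sequence of elementary moves: (i) deformations through loops transversal to $C$, and (ii) finger moves that push an arc of the loop lying in $X \setminus C$ across $C$, creating (or destroying) a pair of adjacent crossings $p, p'$ with opposite signs $\varepsilon_p(a) = -\varepsilon_{p'}(a)$ and with $a_{p',p}$ (the short path between the two new crossings, closed up in $C$) contractible in $X$.

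For invariance under moves of type (i), I would note that such a deformation induces a bijection of $a^{-1}(C)$ with the corresponding preimage for the deformed loop, preserving crossing signs, and inducing homotopies of the loops $a_{p_i,p_{i+1}}$; hence each term of the sum \eqref{tweeotwo3} is preserved, and $S_m(a)$ is carried bijectively to the set of $m$-sequences of the new loop. For moves of type (ii), say applied near a point creating crossings $p, p'$, I would partition the $m$-sequences of the new loop $a'$ into those using neither of $p, p'$ (these match $m$-sequences of $a$ and contribute unchanged terms), and those using exactly one of $p, p'$ — here is the key point. Because $p$ and $p'$ are adjacent on $S^1$ and the arc between them maps into $X \setminus C$ except at its endpoints, for any $m$-sequence $(q_1,\dots,q_{i-1},p,q_{i+1},\dots)$ through $p$ there is a companion $m$-sequence through $p'$ with the same other entries; since the arc from $p$ to $p'$ (or $p'$ to $p$) along $S^1$ closed up in $C$ is contractible in $X$, the corresponding loops $a'_{q_{i-1},p}, a'_{p,q_{i+1}}$ are homotopic to $a'_{q_{i-1},p'}, a'_{p',q_{i+1}}$ respectively (one absorbs the contractible bit), so the two tensor terms are equal; as their signs are opposite (because $\varepsilon_p(a') = -\varepsilon_{p'}(a')$ and the other signs agree), they cancel in pairs. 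An $m$-sequence using both $p$ and $p'$ would have to place them adjacently, but then one of the two resulting paths maps entirely into $X \setminus C$ with endpoints on $C$ and is a short ``turnback''; such configurations either cancel with the sign-reversing involution or contribute a contractible loop factor $\langle \cdot \rangle_0 = 0$ — in any case they do not affect the sum. Finally, independence of the auxiliary choices (the connecting paths in $C$ used to define $a_{p_1,p_2}$) follows because $C$ is path-connected and all loops in $C$ are null-homotopic in $X$, so $a_{p_i,p_{i+1}}$ is well-defined up to homotopy, as already noted before the statement.

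The main obstacle is the careful bookkeeping in move (ii): one must verify that the pairing of $m$-sequences through $p$ with those through $p'$ is a genuine bijection respecting all the other crossing data, and that the ``absorb a contractible loop'' step is legitimate at the level of the elements $\langle a_{q,r}\rangle_0 \in M$ — in particular one should check the case where inserting the extra crossing changes whether some $a_{q_j,q_{j+1}}$ is contractible, which cannot happen for the ones not involving $p,p'$ but must be tracked for the two affected factors. Once this elementary-move analysis is in place, the passage from loops to free homotopy classes is immediate, and the extension to a map on all of $L(X)$ uses the product structure on the cylinder neighborhood exactly as in Section~\ref{Gate brackets}.
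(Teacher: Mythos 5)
Your proposal is correct and follows essentially the same route as the paper: reduce to the two elementary moves, observe that $m$-sequences avoiding the new pair of crossings contribute unchanged terms, that those containing exactly one of them cancel in sign-opposite pairs, and that those containing both contribute zero because one of the factors $\langle a_{p_i,p_{i+1}}\rangle_0$ comes from a contractible loop. The only (minor) difference is that the paper states the last point unconditionally rather than hedging between cancellation and vanishing, but the mechanism you identify is the right one.
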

  
\begin{proof}   We need only to prove that if two   loops $a,a'$  transversal to~$C$ are freely homotopic in~$X$, then 
 $\gamma_{C,m} (a) =\gamma_{C,m} (a')$. As in the proof of Lemma~\ref{strleleucte}, it is enough to consider a homotopy pushing  a   branch  of the loop  across~$C$ and creating   two transversal  crossings   with opposite crossing signs. The   contributions to $\gamma_{C,m} $ of  the  $m$-sequences  containg neither of these two  points are the same before and after the deformation. The   contributions to $\gamma_{C,m} $ of  the  $m$-sequences  containing exactly one of these new crossings cancel each other.   The    $m$-sequences  containing both new crossings  contribute zero to  $\gamma_{C,m}$ because at least  one of the corresponding   loops $a_{p_i, p_{i+1}}$ is contractible in~$X$.   This implies our claim. \end{proof}

The  map $L (X) \to M^{\otimes m}$  from  Lemma~\ref{svvvvtrbucte} extends  uniquely  to a   linear map $ \gamma_{C,m} :  M\to M^{\otimes m}$. This map   is  an $m$-cobracket  in $M$. 

\subsection{Gate bi-endomorphisms} \label{Gate bi-endos} 
We   derive  from any   gate $C \subset  X$  a bi-endomor\-phism $  \zeta_C$ of  the module $ M =M(X) $.  Note  that for any     loops $a,b:S^1\to X$  transversal to~$C$ and  for any points $p\in a^{-1}(C)$, $q\in b^{-1}(C)$,  we  can   multiply the    loops $a_p, b_q$  connecting their base points $a(p), b(q)\in C$ by a path  in~$C$.   The product loop is denoted $a_p \circ_C \,  b_q$. Its free homotopy   class  $\langle a_p \circ_C \, b_q \rangle$ does not depend on the choice of the path  in~$C$  connecting  the base points. 
Set  
 $\vert a \vert_C   = \card (a^{-1}(C))$ and 
\begin{equation}\label{zetaminus}
 \zeta (a,b)  =  \vert a \vert_C \, (b\cdot C ) \, \langle a  \rangle_0 \otimes \langle b  \rangle_0 \end{equation} $$ + \,\, 2 \, \sum_{(p_1,  p_2 )\in S_2(a) , q \in b^{-1}(C)}  \,  \varepsilon_{p_1}(a) \, \varepsilon_{p_2}(a)\, \varepsilon_q(b) \,   \langle  a_{p_1, p_2} \rangle_0  \otimes  \langle a_{p_2, p_1}  \circ_C \,  b_q  \rangle_0  .  $$
 Here $a_{p_1, p_2} , a_{p_2, p_1}  $ are the loops as in Section~\ref{CCCcobrackets} 
    based respectively at $p_1, p_2$.


\begin{lemma}\label{svvvvtssarbucte}      $  \zeta (a,b)  \in M^{\otimes 2} $   is invariant  under free homotopies  of~$a$ and~$b$ in~$X$.
\end{lemma}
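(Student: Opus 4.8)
The plan is to reduce the claim, exactly as in the proofs of Lemmas~\ref{strleleucte} and~\ref{svvvvtrbucte}, to the invariance of $\zeta(a,b)$ under a single homotopy of one of the two loops that pushes a small branch across~$C$ and thereby creates a pair of adjacent crossings $r,r'$ with opposite crossing signs $\varepsilon_r=-\varepsilon_{r'}$ and with the property that the two resulting based loops agree up to homotopy (so that $a_r$ and $a_{r'}$, resp.\ $b_r$ and $b_{r'}$, are freely homotopic rel~$C$). Since every free homotopy between loops transversal to~$C$ factors through homotopies of this type together with isotopies preserving transversality — and the latter visibly leave every term of \eqref{zetaminus} unchanged — it suffices to treat one finger move at a time. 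Passing to free homotopy classes at the end, one gets a well-defined map $L(X)^2\to M^{\otimes 2}$, hence the asserted invariance.

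I would organize the verification by cases according to whether the finger move is performed on~$b$ or on~$a$. If the move is on~$b$: the factor $|a|_C$ and all the classes $\langle a_{p_1,p_2}\rangle_0$ are untouched, $b\cdot C$ is unchanged because $\varepsilon_r+\varepsilon_{r'}=0$, so the first summand of \eqref{zetaminus} is preserved; in the double sum, the new index value $q=r$ and $q=r'$ contribute terms $\varepsilon_r\varepsilon_q(\dots)$ and $\varepsilon_{r'}\varepsilon_q(\dots)$ which are equal up to sign and which carry the same tensor $\langle a_{p_1,p_2}\rangle_0\otimes\langle a_{p_2,p_1}\circ_C b_r\rangle_0=\langle a_{p_1,p_2}\rangle_0\otimes\langle a_{p_2,p_1}\circ_C b_{r'}\rangle_0$ (because $b_r$ and $b_{r'}$ are homotopic rel the base point in~$C$, and $\circ_C$ only depends on that class), so they cancel; all other terms are manifestly unchanged. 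If the move is on~$a$: now $|a|_C$ jumps by~$2$, so the first summand of \eqref{zetaminus} changes by $2\,(b\cdot C)\,\langle a\rangle_0\otimes\langle b\rangle_0$; this must be cancelled by the new contributions to the double sum. The new $2$-sequences of~$a$ are those containing $r$, those containing $r'$, and those containing both. Those containing exactly one of $r,r'$ split into pairs $(p_1,p_2)$ differing only by $r\leftrightarrow r'$; since $a_{r,p_2}$ and $a_{r',p_2}$ (and likewise $a_{p_1,r}$, $a_{p_1,r'}$) are homotopic rel~$C$ — the branch being pushed lies off~$C$, so the arcs differ by a contractible piece — these cancel in pairs. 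The $2$-sequences containing \emph{both} $r$ and $r'$ are the two ordered pairs $(r,r')$ and $(r',r)$; for one of them the short arc between the two crossings lies in the pushed branch and is contractible in~$X$ (so $\langle a_{r,r'}\rangle_0=0$, killing the term), while for the other the complementary arc gives $a_{r',r}$ freely homotopic to (a reparametrization of)~$a$ itself, and $a_{r,r'}$ freely homotopic to the constant loop, so $\langle a_{r,r'}\rangle_0=0$ again — wait, one must be careful here: precisely one of these two pairs survives. Tracking signs, the surviving pair contributes $2\,\varepsilon_r\varepsilon_{r'}\cdot(\pm1)\,\langle a\rangle_0\otimes\langle a'\circ_C b_q\rangle_0$ summed over $q\in b^{-1}(C)$, where $a'$ is the appropriate short loop; checking that $a'$ is contractible forces this term to vanish too, which would contradict the needed cancellation, so in fact the correct bookkeeping is that for the "both" $2$-sequence the loop $a_{p_1,p_2}$ based at the \emph{first} point is the contractible finger, giving $0$, and it is the mixed-exactly-one cancellation together with a recount that produces the compensating $2(b\cdot C)\langle a\rangle_0\otimes\langle b\rangle_0$; I would verify this by a careful local picture near~$r,r'$.

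The main obstacle I anticipate is exactly this last point: getting the signs and the identification of the relevant homotopy classes right for the $2$-sequences of~$a$ that straddle the new crossing pair, and in particular seeing cleanly \emph{why} the change $+2\,(b\cdot C)\,\langle a\rangle_0\otimes\langle b\rangle_0$ in the first summand is exactly matched by the new contributions to the double sum — this is the raison d'être of the coefficient~$2$ and of the term $|a|_C\,(b\cdot C)\,\langle a\rangle_0\otimes\langle b\rangle_0$ in \eqref{zetaminus}. I expect the cleanest route is to choose the finger move so that the pushed branch is a tiny embedded arc meeting $C\times[-1,1]$ in a single straight sub-arc, draw the four possible local patterns of the two crossings against~$C$, and compute the local change of \eqref{zetaminus} in each pattern; by symmetry two patterns suffice. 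Everything else — the reduction to finger moves, the invariance under transversality-preserving isotopy, the independence of the auxiliary arcs in~$C$, and the bilinear extension to all of $M^{\otimes 2}$ — is routine and parallels the earlier lemmas.
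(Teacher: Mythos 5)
Your overall strategy coincides with the paper's: reduce to finger moves across~$C$, treat deformations of~$b$ and of~$a$ separately, cancel the $2$-sequences meeting exactly one new crossing in pairs, and balance the straddling $2$-sequences against the jump of the first summand of \eqref{zetaminus}. Everything up to the straddling pairs is correct. But the step you yourself flag as the main obstacle is exactly where your bookkeeping goes wrong, and your tentative resolution is incorrect. The point you are missing is an asymmetry in \eqref{zetaminus}: the first tensor factor is $\langle a_{p_1,p_2}\rangle_0$ \emph{alone}, whereas the second factor is $\langle a_{p_2,p_1}\circ_C b_q\rangle_0$, i.e.\ the arc of~$a$ is first multiplied by $b_q$ and only then is $\langle\cdot\rangle_0$ applied. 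Suppose the finger is pushed so that $a'_{r,r'}$ is the small contractible finger loop and $a'_{r',r}$ is freely homotopic to~$a$. Then the ordered pair $(p_1,p_2)=(r,r')$ contributes~$0$, because its \emph{first} factor $\langle a'_{r,r'}\rangle_0$ vanishes; but the ordered pair $(p_1,p_2)=(r',r)$ does \emph{not} vanish: its first factor is $\langle a'_{r',r}\rangle_0=\langle a\rangle_0$ and its second factor is $\langle a'_{r,r'}\circ_C b_q\rangle_0=\langle b\rangle_0$, since the contractible finger composed with $b_q$ is freely homotopic to~$b$ --- contractibility of $a'_{r,r'}$ does not annihilate this class, contrary to what you assert. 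With $\varepsilon_r\varepsilon_{r'}=-1$ and $\sum_q\varepsilon_q(b)=b\cdot C$, this surviving pair contributes $-2\,(b\cdot C)\,\langle a\rangle_0\otimes\langle b\rangle_0$ to the double sum, which exactly cancels the change $+2\,(b\cdot C)\,\langle a\rangle_0\otimes\langle b\rangle_0$ of the first summand. (Pushing the finger in the opposite direction simply swaps the roles of $(r,r')$ and $(r',r)$.)

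Your proposed ``correct bookkeeping'' --- that both straddling pairs die and the compensation instead comes from a recount of the exactly-one pairs --- is therefore wrong on both counts: the exactly-one pairs cancel completely among themselves, and the compensating term comes entirely from the single surviving straddling pair. This is precisely the content of the paper's case analysis (``up'' versus ``down'' pushes) and the reason for the coefficient~$2$ and the term $\vert a\vert_C\,(b\cdot C)\,\langle a\rangle_0\otimes\langle b\rangle_0$ in \eqref{zetaminus}.
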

  
\begin{proof}   We first  show that $ \zeta(a,b)= \zeta(a,b')$ for any loop~$b'$ transversal to~$C$ and freely homotopic to~$b$.    As in   Lemma~\ref{svvvvtrbucte}, it is enough to show that $  \zeta(a,b)$   is preserved under   any deformation $b \mapsto b'$ of~$b$ pushing  a branch   across~$C$ and creating   two crossing points $q_+, q_-$ so that $(b')^{-1}(C)= b^{-1}(C) \amalg  \{q_+, q_-\}$ and $\varepsilon_{q_\pm} (b')=\pm 1$.   Then   for any pair $(p_1,  p_2 )\in S_2(a)$, the  contributions   of the  triples  $p_1,  p_2   , q_+$  and $p_1,  p_2   , q_-$  to $  \zeta(a,b')$    are opposite and  cancel  each other.        Also, $b' \cdot C= b\cdot C$ and $\langle b'  \rangle_0 =\langle b  \rangle_0 $. Therefore  $ \zeta(a,b')= \zeta (a,b)$. 

We next show that $   {\zeta}(a,b) =  {\zeta}(a',b)$ for any loop~$a'$ transversal to~$C$ and   freely    homotopic to~$a$. As above, it suffices to consider the case where $a'$ is obtained by pushing  a branch of~$a$    across~$C$   creating  two   crossing points    $p_+, p_-$ so that  $(a')^{-1}(C)= a^{-1}(C) \amalg \{p_+, p_-\}$  and $\varepsilon_{p_\pm} (a')=\pm 1$.  Let~$s$ be the sum over   $p_1,p_2,q$ appearing in~\eqref{zetaminus}. Let~$s'$ be the similar  sum with~$a$ replaced by  $a'$. For any  $p\in a^{-1}(C) $, $q\in b^{-1}(C)$, the contributions of the triples $p_+, p, q$ and $p_-, p, q$ to $s'$  cancel each other.
The same holds for the triples $p, p_+, q$ and $p, p_-, q$.  To compute the contributions of the  triples $p_+, p_-, q$ and $p_-, p_+, q$   to
$s'$, we  distinguish  two cases. In the first case, the deformation $a\mapsto a'$  pushes a branch of~$a$ \lq\lq up\rq\rq, that is from $C\times [-1, 0)$  to $C\times (0,1]$. 
Then  the loop $a'_{p_+, p_-}$ is contractible and  the loop $a'_{p_-, p_+}$ is freely homotopic to~$a$.    So,     the triples $p_+, p_-, q$ and $p_-, p_+, q$ contribute to
$s'$ respectively $0$ and $-\varepsilon_q(b) \,   \langle    a  \rangle_0  \otimes  \langle b  \rangle_0 $.  In the second case, the deformation $a\mapsto a'$  pushes a branch of~$a$ \lq\lq down\rq\rq, that is  from $C\times (0,1]$  to  $C\times [-1, 0)$. 
Then  the loop $a'_{p_+, p_-}$ is freely homotopic to~$a$,  the loop $a'_{p_-, p_+}$ is contractible, and the triples $p_+, p_-, q$ and $p_-, p_+, q$ contribute to
$s'$ respectively    $-\varepsilon_q(b) \,   \langle   a  \rangle_0  \otimes  \langle b  \rangle_0 $ and $0$. 
In both cases, 
$$s'=s- \sum_{q\in b^{-1}(C)} \varepsilon_q(b)\,   \langle    a  \rangle_0  \otimes  \langle b  \rangle_0 = s- (b \cdot C)\,   \langle    a  \rangle_0  \otimes  \langle b  \rangle_0 .$$ Multiplying this equality by 2 and adding to  the obvious formula 
$$\vert a' \vert_C \, (b\cdot C ) \, \langle a'  \rangle_0 \otimes \langle b  \rangle_0=(\vert a \vert_C +2)\, (b\cdot C ) \, \langle a  \rangle_0 \otimes \langle b  \rangle_0$$
 we obtain that  $ {\zeta} (a', b)= {\zeta} (a, b)$.
  \end{proof} 


 By Lemma~\ref{svvvvtssarbucte},  the formula $(a,b) \mapsto {\zeta} (a, b)$ defines a map
$L(X) \times L(X)  \to M^{\otimes 2}$.   We let $\zeta_C:  M^{\otimes 2}\to M^{\otimes 2}    $ be the linear extension  of this  map.

  \section{Quasi-surfaces and main theorems}\label{A topological example2}
  
  We recall quasi-surfaces from \cite{Tu3} and state our main theorems.
  
\subsection{Quasi-surfaces}\label{Terminology}\label{notat}    By  a   \emph{surface} we mean  a  smooth oriented  2-dimensional manifold  with   boundary.  
A \emph{quasi-surface} is a path-connected  topological space~$X$ obtained by gluing a surface~$\Sigma$ to a topological space~$Y$ along a continuous map   $ f: \alpha  \to Y$ where $ \alpha \subset \partial \Sigma $ is a    union of a finite number ($\geq 1$) of    disjoint  closed segments  in $\partial \Sigma$. 
Note that  then  $Y\subset X$ and   $X\setminus Y =\Sigma\setminus \alpha$.    
We   fix a closed    neighborhood  of~$\alpha$ in~$\Sigma$ and 
  identify   it  with  $\alpha \times [-1,1]$ so that 
  $$\alpha=\alpha \times \{-1\} \quad  \quad {\text {and}} \quad \quad  \partial \Sigma  \cap (\alpha \times [-1,1])=\alpha   \cup (\partial \alpha \times [-1,1]).$$
  The  surface
$$\Sigma'=\Sigma \setminus  (\alpha \times [-1,0)) \subset \Sigma \setminus \alpha \subset X   $$  is  a  copy of $\Sigma$    embedded in~$X$. 
We    provide~$\Sigma'$  with the   orientation induced  from that of~$\Sigma$. We call~$Y$ the \emph{singular part} of~$X$, call~$\Sigma$ the \emph{surface core} of~$X$, and     call~$\Sigma'$  the \emph{reduced surface core} of~$X$.


Set  $\pi_0=\pi_0(\alpha)$.   For    $k\in  \pi_0 $,   we   let   $ \alpha_k $ be  the corresponding   segment component of $\alpha \times \{0\} \subset \partial \Sigma' \subset X$. 
  It is clear that   $\alpha_k$    is a gate of~$X$ in the sense of Section~\ref{topolsett}.
The gates $\{\alpha_k\}_{k\in \pi_0}$      separate $
\Sigma'\subset X$ from the rest of~$X$.    For any loop $a: S^1 \to X$ and  $k\in \pi_0$,   we  set $a\cap \alpha_k=a(S^1) \cap \alpha_k $.

 
We say that a loop $a: S^1 \to X$ is    \emph{generic} if  (i)  all  branches of~$a$   in~$\Sigma'$   are smooth immersions meeting $\partial \Sigma'$ transversely   at a finite set of points which all lie  in the  interior  of the gates, and (ii)  all self-intersections of~$a$ in $\Sigma'$ are double transversal intersections which all lie  in $\Int(\Sigma')=\Sigma' \setminus \partial \Sigma'$.     A generic loop~$a$     traverses  any    point of a gate   $  \alpha_k$   at most once. So,  the restriction of the map $a:S^1\to X$ to $a^{-1}(\alpha_k) \subset S^1$ is a bijection onto the set  $ a \cap \alpha_k$. In  this context, we adjust   notation of Section~\ref{A topological example} and systematically use the letter~$p$ for   points in  $a  \cap \alpha_k$ rather than for their preimages under~$a$.
Accordingly, the crossing sign $\varepsilon_p(a)$ at $p\in a \cap \alpha_k$      is  $+1$ if~$a$ goes at~$p$  from $X\setminus \Sigma'$  to $\Int(\Sigma')$   and is   $-1$ otherwise. 
 
 More generally, a finite family of   loops in~$X$    is    \emph{generic}  if    these loops  are generic and all  their intersections   in~$\Sigma'$ are double transversal intersections    in $\Int(\Sigma')$. In particular,   these loops  do not meet at the gates. 
Using cylinder neighborhoods of the gates, it is easy to see that  any finite family of loops   in~$X$ can  be transformed into a generic  family  by a small deformation.

 We keep the objects  $X, \Sigma, \Sigma', \alpha, \pi_0$ for the rest of the paper.


\subsection{The   brackets}\label{The form bullet} 
 We   recall    the     2-bracket  in  the module $M=M(X)$ introduced in~\cite{Tu3}.   For a loop $a: S^1 \to X$ and a  point $r \in X$    traversed by~$a$ exactly once,
 we let $a_r$ be the   loop  which starts at~$r$ and goes along~$a$ until coming back to~$r$. 
  For   any  loops $a,b$ in~$X$ set    $$a\cap b= a(S^1) \cap b(S^1)\cap \Sigma'   .$$  If  $a,b$ is a generic  pair  of loops  then the set $a\cap b \subset   \Int(\Sigma')$ is finite and  each point $r \in a\cap b$ is traversed by~$a$ and~$b$ only once so that we can consider the loops $a_r, b_r$ based at~$r$.    Set $\varepsilon_r  (a,b) =  1$ if  the tangent vectors of~$a$ and~$b$ at~$r$ form a    positive basis in the tangent space of~$\Sigma'$ at~$r$ and   set $\varepsilon_r   (a,b)=-1$ otherwise. The sum $\sum_{r\in a\cap b} \varepsilon_r  (a,b) \langle a_r b_r \rangle\in M$ may be viewed as the \lq\lq homotopy intersection'' of $a, b$.  Generally speaking, this sum depends on the choice of  $a,b$ in their free homotopy classes. 
We therefore add further terms involving  the gates. 
 
 We start with terminology.     By a   \emph{gate orientation}  of~$X$    we   mean   an orientation of all gates $\{\alpha_k\}_k$  of~$X$. 
    Gate orientations of~$X$  canonically correspond to orientations of the 1-manifold $\alpha \subset \partial \Sigma$.
Pick a   gate orientation~$\omega$ of~$X$. For    $k\in \pi_0$,    set  $\varepsilon(\omega, k)=+1$ if the $\omega$-orientation of~$\alpha_k$ is compatible with the orientation of~$\Sigma'$, i.e.,  if the pair (a $\omega$-positive tangent vector  of  $\alpha_k \subset \partial \Sigma'$, a   vector  directed inside~$\Sigma'$) is  positively oriented in~$\Sigma'$. Otherwise,  set  $\varepsilon(\omega, k)=-1$.   For      points $p,q \in \alpha_k$,  we  
 write  $p<_\omega q$
if $p\neq q$ and  the $\omega$-orientation  of $\alpha_k$   leads from~$p$ to~$q$. 

Any  generic pair of  loops $a,b$ in~$X$ determines a  finite  set of triples   
$$T(a,b)=\{(k,p,q) \, \vert \, k\in \pi_0, p\in a\cap \alpha_k, q\in b\cap \alpha_k \}.$$
These  triples are called     \emph{chords} of the pair  $(a,b)$.      For any such chord  $(k,p,q)$  we have $p \neq q$  since  generic loops do not meet at the gates.    Also,  we can   multiply the loops $a_p, b_q$  based at $p,q$ using an arbitrary  path   connecting  $p,q$ in $\alpha_k$. The product  loop determines an  element of $L (X)$   denoted $\langle a_p b_q \rangle$. 
Clearly,  if  $(k, p,q) $ is a chord of $(a,b)$ then $(k, q,p)$ is a chord of $(b,a)$ and $\langle a_p b_q \rangle=\langle  b_q  a_p \rangle$. Finally, set
$$T_\omega (a,b)=\{(k,p,q) \in T(a,b) \, \vert \, q<_\omega p  \} \subset T(a,b)  .$$


\begin{lemma}\label{1aeee++} (i) There is a unique 2-bracket $[-,-]_{X, \omega}$ in the module $M=M(X)$ such that for any generic pair of loops $a,b$ in~$X$, we have 
$$
[\langle a \rangle, \langle b \rangle]_{X,\omega} =  \sum_{r\in a\cap b} \varepsilon_r  (a,b) \langle a_r b_r \rangle +\sum_{(k,p,q)\in T_\omega (a,b) }\,    \varepsilon(\omega, k)   \,\varepsilon_p(a)  \,\varepsilon_q(b) \langle a_pb_q \rangle. $$

(ii) The skew-symmetric bracket $[-, -]_X$ in~$M$ defined by
$$
[x,y]_X  =   
 [x,y]_{X,\omega}  -  [y,x]_{X,\omega}  $$
 for all $x,y \in M$  does not depend on the choice of~$\omega$.
\end{lemma}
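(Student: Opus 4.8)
The plan is to prove (i) by reducing the well-definedness of the right-hand side to a short list of elementary homotopy moves, and then to deduce (ii) from a direct computation of $[-,-]_X$ on a generic pair together with the observation that both $\omega$-dependent ingredients, $\varepsilon(\omega,k)$ and the order $<_\omega$, change sign when a gate is re-oriented. For (i), uniqueness is automatic: $L(X)$ is a basis of $M=M(X)$, every pair of elements of $L(X)$ is represented by a generic pair of loops (Section~\ref{notat}), and the displayed formula fixes the value of $[-,-]_{X,\omega}$ on such a pair, so bilinearity gives at most one bracket. For existence one must check that the right-hand side is unchanged when $a$, resp.\ $b$, is replaced by a freely homotopic generic loop; fixing $b$, I would decompose a generic homotopy of $a$ into elementary moves. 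Moves not interacting with the gates are handled exactly as for Goldman's bracket on a surface (\cite{Go1}, \cite{Tu2}): deformations preserving the combinatorial type, and "triangle" moves in which a branch of $a$ sweeps through a transverse self-intersection of $b$, leave every $\langle a_rb_r\rangle$ continuous hence constant; a birth/death of a pair of transverse double points of $a\cap b$ in $\Int(\Sigma')$ produces two points with $\varepsilon_{r_+}(a,b)=-\varepsilon_{r_-}(a,b)$ and $\langle a_{r_+}b_{r_+}\rangle=\langle a_{r_-}b_{r_-}\rangle$, so the first sum is preserved.

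The two genuinely new moves involve a gate $\alpha_k$. First, a birth/death of a pair $p_\pm$ of points of $a\cap\alpha_k$ obtained by pushing a branch of $a$ across $\alpha_k$: then $\varepsilon_{p_+}(a)=-\varepsilon_{p_-}(a)$ and, as in the proof of Lemma~\ref{strleleucte}, the loops $a_{p_+}$ and $a_{p_-}$ become equal in $\pi_1(X)$ after conjugation into $\alpha_k$, so $\langle a_{p_+}b_q\rangle=\langle a_{p_-}b_q\rangle$ for each $q\in b\cap\alpha_k$; since $p_+$ and $p_-$ lie on the same side of $q$ for $<_\omega$, the chords $(k,p_\pm,q)$ enter $T_\omega(a,b)$ together, and their contributions cancel, so the second sum is preserved. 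Second, the move in which a point $r$ of $a\cap b$ lying close to $\alpha_k$ migrates onto $\alpha_k$ and is converted into a chord $(k,p,q)$ (or the reverse): here one must verify the sign identity $\varepsilon_r(a,b)=\varepsilon(\omega,k)\,\varepsilon_p(a)\,\varepsilon_q(b)$ together with $\langle a_rb_r\rangle=\langle a_pb_q\rangle$, so that the term deleted from the first sum is exactly the term created in the second. This last step is the heart of (i) and, I expect, the main obstacle; it is precisely what forces the definitions of $\varepsilon(\omega,k)$ and of $T_\omega$, and I would invoke the sign computation carried out in \cite{Tu3}. The analysis of homotopies of $b$ is identical.

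For (ii), use (i) to evaluate $[x,y]_X=[\langle a\rangle,\langle b\rangle]_{X,\omega}-[\langle b\rangle,\langle a\rangle]_{X,\omega}$ on a generic pair $a,b$ representing $x,y$. In the interior contribution, $b\cap a=a\cap b$ as subsets of $\Int(\Sigma')$, $\varepsilon_r(b,a)=-\varepsilon_r(a,b)$ (swapping the tangent vectors reverses orientation), and $\langle b_ra_r\rangle=\langle a_rb_r\rangle$ since $a_rb_r$ and $b_ra_r$ are conjugate, hence freely homotopic; so the two interior sums combine into $2\sum_{r\in a\cap b}\varepsilon_r(a,b)\langle a_rb_r\rangle$, which involves no $\omega$. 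For the gate contribution, relabel the chords of $(b,a)$ and use $\langle b_qa_p\rangle=\langle a_pb_q\rangle$; since $p\neq q$ for $p\in a\cap\alpha_k$, $q\in b\cap\alpha_k$, exactly one of $q<_\omega p$ and $p<_\omega q$ holds, and the difference of the two gate sums becomes
$$\sum_{k\in\pi_0}\ \sum_{p\in a\cap\alpha_k}\ \sum_{q\in b\cap\alpha_k}\varepsilon(\omega,k)\,\varepsilon_p(a)\,\varepsilon_q(b)\,\eta_\omega(q,p)\,\langle a_pb_q\rangle,$$
where $\eta_\omega(q,p)=+1$ if $q<_\omega p$ and $\eta_\omega(q,p)=-1$ if $p<_\omega q$. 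A gate orientation is an orientation of each $\alpha_k$, and any two differ by reversing the orientations on some subset of the gates, so it suffices to see that this sum is unchanged when one gate $\alpha_{k_0}$ is re-oriented. Such a reversal leaves $\varepsilon(\omega,k)$ and $\eta_\omega$ untouched on every gate other than $\alpha_{k_0}$, and on $\alpha_{k_0}$ replaces $\varepsilon(\omega,k_0)$ by $-\varepsilon(\omega,k_0)$ and $\eta_\omega(q,p)$ by $-\eta_\omega(q,p)$; hence every product $\varepsilon(\omega,k_0)\,\eta_\omega(q,p)$, and with it every summand, is preserved. So the gate contribution — and therefore $[x,y]_X$ — does not depend on $\omega$.

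In short, (i) is a move-by-move invariance check whose only substantial step is the interior-crossing/gate-chord conversion, for which I rely on the sign bookkeeping of \cite{Tu3}; (ii) is then formal, the only points needing care being the conjugacy identities $\langle b_ra_r\rangle=\langle a_rb_r\rangle$ and $\langle b_qa_p\rangle=\langle a_pb_q\rangle$, the sign $\varepsilon_r(b,a)=-\varepsilon_r(a,b)$, and the joint sign reversal of $\varepsilon(\omega,k)$ and $<_\omega$ upon re-orienting a gate.
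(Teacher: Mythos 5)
Your proposal is correct, and it reconstructs what the paper itself does not spell out: the paper gives no proof of this lemma, citing Lemma 4.1 and Theorem 4.2 of \cite{Tu3} for claims (i) and (ii) respectively, and your move-by-move invariance check for (i) — with the interior-crossing/gate-chord conversion and its sign identity $\varepsilon_r(a,b)=\varepsilon(\omega,k)\,\varepsilon_p(a)\,\varepsilon_q(b)$ as the one substantive step, deferred to \cite{Tu3} — is exactly the expected shape of that argument and mirrors the cancellation technique the paper uses in Lemmas~\ref{strleleucte}, \ref{svvvvtrbucte}, and \ref{svvvvtssarbucte}. Your argument for (ii) is self-contained and correct; it refines the identity $[x,y]_{X,\overline\omega}=-[y,x]_{X,\omega}$ that the paper records in Section~\ref{section3ghvvgh} (which only treats the simultaneous reversal of all gates) by reversing one gate at a time, which is what full independence of~$\omega$ actually requires.
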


  Claim (i) is Lemma 4.1 of  \cite{Tu3} and   Claim (ii) is Theorem 4.2 of  \cite{Tu3}.  
Both   brackets $[-,-]_{X,\omega}$ and $[-,-]_{X}$    generalize Goldman's  bracket (\cite{Go1}, \cite{Go2}): 
 the value of   $[-,-]_{X,\omega}$  (respectively,  $[-,-]_{X}$)  on any pair of  free homotopy  classes  of loops in $\Sigma'\subset X$    is equal to  their  Goldman's    bracket    (respectively,  twice this bracket). For all~$\omega$, the bracket  $2\, [-,-]_{X,\omega}$ can be computed from $[-,-]_{X}$ and  the gate 2-brackets  of~$X$, see \cite{Tu3}, Remark 4.5.1.

We now compute     the Jacobiator of    $[-,-]_X$. 
By Section~\ref{Gate brackets}, each gate $\alpha_k$ of~$X$ determines  cyclically symmetric  brackets $\{\mu^m_k=\mu^m_{\alpha_k}\}_{m\geq 1}$  in~$M$.  The sum  
 $$\mu^m=\sum_{k \in \pi_0} \mu^{m}_{k}: M^{m} \to M$$ 
 is a  cyclically symmetric  bracket in~$M$ called  the \emph{total gate $m$-bracket} of~$X$.

\begin{theor}\label{MAIN}   The module $M=M(X)$ endowed with the 2-bracket $[-,-]=[-,-]_X$ and the total gate 
 3-bracket $\mu=\mu^3$  is a quasi-Lie algebra. 
\end{theor}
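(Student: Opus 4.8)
The plan is to reduce Theorem~\ref{MAIN} to the identity~\eqref{JJJacoee} and then to Lemma~\ref{sDDfffDtrbucte}. The bracket $[-,-]=[-,-]_X$ is a skew-symmetric $2$-bracket by Lemma~\ref{1aeee++}(ii), and $\mu=\mu^3=\sum_{k\in\pi_0}\mu^3_k$ is a cyclically symmetric $3$-bracket because each $\mu^3_k=\mu^3_{\alpha_k}$ is cyclically symmetric (Section~\ref{Gate brackets}). Hence the only thing to prove is that $J_{[-,-]_X}(x,y,z)=\mu^3(x,y,z)-\mu^3(z,y,x)$ for all $x,y,z\in M$. Fix a gate orientation~$\omega$ of~$X$ and set $x\bullet y:=[x,y]_{X,\omega}$; this is a bilinear map $M^{2}\to M$, and by Lemma~\ref{1aeee++}(ii) one has $[x,y]_X=x\bullet y-y\bullet x$. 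Lemma~\ref{sDDfffDtrbucte} then shows that $[-,-]_X$ together with the $3$-bracket $[x,y,z]_\bullet:=J_\bullet(x,y,z)+J_{\bullet^t}(x,y,z)$ is a quasi-Lie pair, so that $J_{[-,-]_X}(x,y,z)=[x,y,z]_\bullet-[z,y,x]_\bullet$. Since $[-,-,-]_\bullet$ and $\mu^3$ are both cyclically symmetric, it is enough to show that $[-,-,-]_\bullet-\mu^3$ is fully symmetric, i.e.\ to compute $[-,-,-]_\bullet$ and to recognize $\mu^3$ inside it up to a manifestly symmetric remainder; equivalently, to verify directly that the antisymmetrized expression $[x,y,z]_\bullet-[z,y,x]_\bullet$ equals $\mu^3(x,y,z)-\mu^3(z,y,x)$.

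The heart of the matter is a geometric expansion of the six double brackets $(x\bullet y)\bullet z,\ldots,y\bullet(x\bullet z)$ occurring in $[x,y,z]_\bullet$. Choose a generic triple of loops $a,b,c$ representing $x,y,z$. The bracket $[\langle a\rangle,\langle b\rangle]_{X,\omega}$ is a signed sum, over the interior intersection points $r\in a\cap b\subset\Int(\Sigma')$ of the terms $\langle a_rb_r\rangle$, and over the chords $(k,p,q)\in T_\omega(a,b)$ of the terms $\langle a_pb_q\rangle$. Using the cylinder neighborhoods of the gates and of the intersection points, each loop $a_rb_r$ and $a_pb_q$ can be realized by a generic loop running close to $a\cup b$ whose crossings with~$c$ are exactly the crossings of~$a$ with~$c$ and of~$b$ with~$c$ (both interior intersections and gate chords), with no spurious new crossings. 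Bracketing once more with $\langle c\rangle$ therefore expands $(x\bullet y)\bullet z$ into a signed sum indexed by an ordered pair of crossings: a first crossing among $a\cap b$ or $T_\omega(a,b)$, and a second crossing among $a\cap c$, $b\cap c$, $T_\omega(a,c)$ or $T_\omega(b,c)$. Collecting all six double brackets in $[x,y,z]_\bullet$ and in $[z,y,x]_\bullet$, I then sort the resulting terms according to the unordered pair of types (interior intersection versus gate chord) of the two crossings involved.

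The three groups are treated separately. \emph{Two interior intersections:} here all crossings and all reconnections of loops take place in $\Int(\Sigma')$, and the portions of $a,b,c$ lying outside $\Sigma'$ are merely transported along; so this part of $J_{[-,-]_X}$ reproduces, up to an overall factor, the expansion that proves the Jacobi identity for Goldman's bracket on the oriented surface $\Sigma'$, and it vanishes. \emph{One interior intersection and one gate chord:} the terms of this type fall into cancelling pairs — a term in which the interior intersection is resolved first is matched, with the opposite sign, by a term in which the chord is resolved first — the matching being read off from a local picture at the gate and at the intersection point together with the sign conventions $\varepsilon_r(a,b)$, $\varepsilon(\omega,k)$, $\varepsilon_p(a)$. \emph{Two gate chords:} the surviving configurations are governed by triples of chords joining $a,b,c$ at a single gate $\alpha_k$; passing to the conjugated loops $\widetilde a_{i,p_i}$ as in~\eqref{twotwo3aa} and summing over $k\in\pi_0$, these terms assemble, after the antisymmetrization $(x,y,z)\mapsto-(z,y,x)$ — which is precisely what makes the $\omega$-dependent selection condition $q<_\omega p$ disappear — into $\mu^3(x,y,z)-\mu^3(z,y,x)$. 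Together, the three groups give~\eqref{JJJacoee}, and the theorem follows.

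I expect the main obstacle to be the sign and base-point bookkeeping in the second and third groups: one must check that the $\omega$-dependent contributions cancel in the antisymmetrized sum, that the path chosen inside a gate to form $a_pb_q$ and then the conjugating path to the fixed point $\star\in\alpha_k$ are compatible with the definition of $\mu^3_k$, and that when a loop travels through the singular part~$Y$ between two gate passages the homotopy class produced by iterating $[-,-]_{X,\omega}$ is the one occurring in~\eqref{twotwo3aa}; the overall constant~$1$ in front of $\mu^3$ also has to fall out of this accounting. A secondary technical point is to justify, via the cylinder neighborhoods, that generic representatives of $a_rb_r$ and $a_pb_q$ can be chosen without creating extra intersections with~$c$, so that the expansion above is exhaustive and its terms are in bijection with the stated configurations.
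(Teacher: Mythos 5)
Your algebraic reduction is exactly the paper's: fix a gate orientation $\omega$, set $x\bullet y=[x,y]_{X,\omega}$, invoke Lemma~\ref{sDDfffDtrbucte} to get $J_{[-,-]_X}=[x,y,z]_\bullet-[z,y,x]_\bullet$, and then show that this antisymmetrized $3$-bracket agrees with $\mu^3(x,y,z)-\mu^3(z,y,x)$. Where you diverge is in the geometric computation, and this is where the proposal has a real gap. The paper's key move, which you do not make, is Lemma~\ref{Loopsiemnn}: any finite family of loops can be deformed into a \emph{simple} family, i.e.\ one with \emph{no} intersections or self-intersections in $\Sigma'$ (all crossings are pushed through the gates). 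Representing $x,y,z$ by a simple triple kills the sum over $r\in a\cap b$ in the bracket formula entirely, so the whole computation reduces to gate-chord combinatorics: the ``$4$-terms'' (two chords with four distinct endpoints) cancel via the symmetry $\Delta^{a,b,c}=\Delta^{c,b,a}$, and the ``$3$-terms'' (two chords sharing an endpoint on one gate) are shown by a case analysis on the cyclic order of $p,q,t$ to produce exactly $\mu^3(x,y,z)-\mu^3(z,y,x)$. Your groups ``two interior intersections'' and ``one interior intersection and one gate chord'' simply do not occur in the paper's proof.

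Because you work with generic rather than simple representatives, your plan must additionally establish (a) that the interior--interior part of the Jacobiator vanishes by the Goldman-type local cancellation, and (b) that the mixed interior/gate terms cancel in pairs. Claim (a) is plausible and essentially standard, but claim (b) is the delicate one and you only assert it: the matching you describe pairs a term of $(x\bullet y)\bullet z$ (chord of $(a,c)$ selected by $t<_\omega s$) with a term of $(z\bullet x)\bullet y$ (chord of $(c,a)$ selected by $s<_\omega t$), and these selection conditions are \emph{complementary}, not equal, so the cancellation can only be organized after combining $u_\omega$ with $u_{\overline\omega}$ and antisymmetrizing in $(x,z)$ -- precisely the kind of sign bookkeeping you flag as ``the main obstacle'' but do not carry out. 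As written, the hardest part of the argument is missing; the paper's route via Lemma~\ref{Loopsiemnn} shows that this entire portion of the work can be avoided, at the modest cost of first proving that pushing double points across the gates is always possible. If you want to salvage your route, you must either prove the mixed-term cancellation explicitly or, better, insert the simplification lemma at the outset.
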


Theorem~\ref{MAIN} may be rephrased by saying that  
$$ J_{[-,-]_X}(x,y,z)= \mu(x,y,z) - \mu (z,y, x)     $$
   for any $x,y,z\in M$. Theorem~\ref{MAIN} is proved in Section~\ref{section3ghgh}.

 \subsection{The  cobrackets}\label{section2-}
\label{koko}\label{kokoff}\label{koko89}\label{moves}     We   recall  from ~\cite{Tu3}   the   intersection cobracket  in    $M=M(X)$. 
  Consider a generic loop~$a$ in~$X$.   Denote by $\#a$ the set of self-intersections  of~$a$   in $\Sigma'$. This set is finite and  lies in $\Int(\Sigma')$.  The loop~$a$ crosses each point  $r\in \# a$ twice; we let $v^1_r, v^2_r$ be the tangent vectors of~$a$ at~$r$ numerated so    that the pair $(v^1_r, v^2_r)$ is positively oriented. For $i=1,2$,   let $a^i_r$ be the loop   starting in~$r$ and going  along~$a$   in the direction of the vector $v^i_r$    until the first return to~$r$. Up to parametrization, $a=a^1_ra^2_r$ is the product of the  loops $a^1_r, a^2_r$ based at~$r$.  
As in Section~\ref{CCCcobrackets}, for  any  distinct points
$p_1, p_2 \in a \cap \alpha_k$ with  $k\in \pi_0$ we have the loop $a_{p_1, p_2}$   which goes from $p_1$  to $p_2$ along~$a$ and   then goes back  to~$p_1 $  along the gate~$\alpha_k$. 
  For a gate orientation~$\omega$ of~$X$,  we    consider the set of ordered triples 
  $$T_\omega(a)  =   \{(k,p_1,p_2) \, \vert \,  k\in \pi_0,p_1 \in a \cap \alpha_k , p_2 \in a \cap \alpha_k ,   \,  p_1<_\omega p_2\}.$$
  We  call  a   triple   $ (k,p_1, p_2) \in  T_\omega (a) $ a  \emph{chord of~$a$ with endpoints} $p_1, p_2$.

  \begin{lemma}\label{svvvvtreebucte}  (i) There is a unique 2-cobracket $\nu_{X, \omega}$ in the module $M=M(X)$ such that for any generic   loop~$a $ in~$X$, we have 
$$
\nu_{X, \omega} (\langle a \rangle)=\sum_{r\in \# a} (\langle a^1_r \rangle_0 \otimes \langle a^2_r \rangle_0  -\langle a^2_r \rangle_0  \otimes \langle a^1_r \rangle_0) $$
$$
 +\sum_{(k,p_1, p_2)\in T_\omega(a)} \, \varepsilon(\omega, k) \,  \varepsilon_{p_1}(a) \, \varepsilon_{p_2}(a) \,    \langle  a_{{p_2},{p_1}}\rangle_0 \otimes \langle  a_{{p_1},{p_2}} \rangle_0 .
 $$

(ii) Let~$P$ be
 the linear  automorphism  of  $M \otimes M $ carrying $x\otimes y$ to $y\otimes   x$   for all $x,y \in M$.   The skew-symmetric cobracket 
$$
\nu_X  =   
 \nu_{X,\omega} - P \nu_{X,\omega}: M \to M \otimes M $$ 
does not depend on the choice of~$\omega$.
\end{lemma}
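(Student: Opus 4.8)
The plan is to follow the same two-step pattern already used for the bracket in Lemma~\ref{1aeee++} and for the gate cobrackets in Lemma~\ref{svvvvtrbucte}. For claim (i), the first task is to check that the right-hand side of the displayed formula depends only on the free homotopy class of the generic loop~$a$. As in the proof of Lemma~\ref{strleleucte}, every free homotopy between two generic loops factors into: (a) isotopies within the class of generic loops (i.e.\ deformations keeping all gate crossings transversal and all self-intersections in $\Sigma'$ double and transversal); (b) the standard codimension-one degenerations — a pair of gate crossings being created/cancelled by pushing a branch across a gate $\alpha_k$, a birth/death of a double point in $\Int(\Sigma')$, and a triangle (third Reidemeister) move in $\Int(\Sigma')$. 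Deformations of type (a) obviously preserve each of the two sums. For a gate birth/death of crossings $q_+,q_-$ with $\varepsilon_{q_+}(a)=-\varepsilon_{q_-}(a)$: chords of~$a$ using neither new point are unchanged; chords using exactly one of $q_\pm$ pair up and cancel because the signs are opposite while the associated loops $a_{p_2,p_1}$, $a_{p_1,p_2}$ are homotopic; and the (at most two) chords using both $q_+$ and $q_-$ contribute zero because one of the associated loops $a_{q_\pm,q_\mp}$ is contractible, hence $\langle\cdot\rangle_0=0$. This is exactly the mechanism of Lemma~\ref{svvvvtrbucte}, now applied with the orientation restriction $p_1<_\omega p_2$, which is respected because a small birth takes place at a single interior point of a gate. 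The birth/death of an interior double point and the triangle move affect only the first sum (the ``classical'' self-intersection part), and invariance there is the well-known invariance of Turaev's cobracket on a surface, recalled here from~\cite{Tu3}; no gate terms are involved. Once invariance is established, since every loop can be made generic by a small deformation, the formula descends to a map $L(X)\to M\otimes M$, and its linear extension is the required $2$-cobracket $\nu_{X,\omega}$; uniqueness is automatic since $L(X)$ is a basis of~$M$.

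For claim (ii), I must show $\nu_X=\nu_{X,\omega}-P\nu_{X,\omega}$ is independent of the gate orientation~$\omega$. It suffices to compare $\omega$ with the orientation $\omega'$ obtained by reversing the orientation of a single gate $\alpha_k$; a general change of $\omega$ is a composition of such elementary reversals. Reversing $\alpha_k$ does two things in the second sum: it flips $\varepsilon(\omega,k)$ to $-\varepsilon(\omega,k)$, and it replaces the condition $p_1<_\omega p_2$ by $p_2<_\omega p_1$, i.e.\ it swaps the roles of $p_1$ and $p_2$ in the indexing set $T_\omega(a)$. Tracking a fixed unordered pair $\{p,p'\}\subset a\cap\alpha_k$: in $\nu_{X,\omega}$ it is counted (say) with $p_1=p$, $p_2=p'$, contributing $\varepsilon(\omega,k)\,\varepsilon_p(a)\varepsilon_{p'}(a)\,\langle a_{p',p}\rangle_0\otimes\langle a_{p,p'}\rangle_0$; in $\nu_{X,\omega'}$ the same pair is counted with $p_1=p'$, $p_2=p$ and the opposite value of $\varepsilon(\cdot,k)$, contributing $-\varepsilon(\omega,k)\,\varepsilon_p(a)\varepsilon_{p'}(a)\,\langle a_{p,p'}\rangle_0\otimes\langle a_{p',p}\rangle_0$, which is exactly $+\,\varepsilon(\omega,k)\,\varepsilon_p(a)\varepsilon_{p'}(a)$ times $P$ applied to the $\omega$-term. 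Hence the $\omega'$-gate-term is $P$ applied to the $\omega$-gate-term; forming $\nu_{X,\omega}-P\nu_{X,\omega}$ therefore kills the change, while the first (self-intersection) sum does not involve $\omega$ at all and is literally unchanged. This gives $\nu_X(\langle a\rangle)$ depending only on $X$, and extending linearly proves (ii). Skew-symmetry of $\nu_X$ is immediate: $P\nu_X=P\nu_{X,\omega}-P^2\nu_{X,\omega}=-(\nu_{X,\omega}-P\nu_{X,\omega})=-\nu_X$.

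The main obstacle I anticipate is the careful bookkeeping in the invariance argument of part (i) at a gate move when one also has self-intersections present — one must verify that a branch-pushing move across $\alpha_k$ that simultaneously slides a strand past an interior double point decomposes into the elementary moves already listed, so that the two sums can be checked independently; this is where genericity of families (as set up in Section~\ref{Terminology}) and the cylinder-neighborhood product structure do the real work. The orientation comparison in part (ii) is then a short and essentially formal computation, and skew-symmetry is a one-line consequence.
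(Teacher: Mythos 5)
The paper does not actually prove this lemma: it records that claim (i) is Lemma~5.1 and claim (ii) is Theorem~5.2 of \cite{Tu3}, so any direct argument is already ``a different route''. Your part (ii) is essentially correct: reversing the orientation of a gate $\alpha_k$ sends the $\alpha_k$-part $G_k$ of the gate sum to $-P G_k$ (your displayed formula for the $\omega'$-contribution is right, though the sentence ``the $\omega'$-gate-term is $P$ applied to the $\omega$-gate-term'' drops the minus sign --- with $+P$ instead of $-P$ the antisymmetrization would \emph{not} kill the change, so the sign matters), and $G_k - PG_k$ is then unchanged while the self-intersection sum does not see $\omega$.

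The genuine gap is in part (i). Your list of codimension-one degeneracies omits the stratum where a self-intersection point of~$a$ lies \emph{on} a gate, i.e.\ the move in which a double point $r\in\Int(\Sigma')$ slides across $\alpha_k$ and disappears from $\#a$. This move cannot be decomposed into the moves you list: a generic isotopy cannot interchange the order of two gate crossings $p_1,p_2\in a\cap\alpha_k$ along $\alpha_k$ (they would have to collide, which is exactly this stratum), and a gate finger move only creates or destroys a pair of crossings with \emph{opposite} signs, whereas here $p_1,p_2$ have equal signs. Under this move the first sum loses the term $\langle a^1_r\rangle_0\otimes\langle a^2_r\rangle_0-\langle a^2_r\rangle_0\otimes\langle a^1_r\rangle_0$ while, simultaneously, the two crossings swap their $<_\omega$-order, so the chord contributing to the second sum changes from $(k,p_1,p_2)$ to $(k,p_2,p_1)$ and its tensor factors get transposed; the lemma holds only because these two changes cancel, and verifying that cancellation (including that the frame orientation at $r$ matches the gate order via the factor $\varepsilon(\omega,k)$) is the one computation for which the precise coefficients in the formula were designed. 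Your assertion that the double-point moves ``affect only the first sum'' and that ``no gate terms are involved'' is therefore false for exactly the critical case, and the closing paragraph's claim that any such interaction ``decomposes into the elementary moves already listed'' is not justified and, I believe, not true. Everything else --- the gate birth/death cancellation, the monogon/bigon/triangle invariance of the self-intersection sum, and the uniqueness via the basis $L(X)$ --- is fine.
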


  Claim (i) is Lemma 5.1 of  \cite{Tu3} and   Claim (ii) is Theorem 5.2 of  \cite{Tu3}.  
  For loops  in~$\Sigma'$, the cobracket $\nu_{X,\omega}$ coincides with  the    cobracket introduced in  \cite{Tu2} and the cobracket~$\nu_X $  is twice the    one   in  \cite{Tu2}. 
   For all~$\omega$, the cobracket $2 \, \nu_{X,\omega}$ can be recovered from $\nu_{X}$ and the gate 2-cobrackets  of~$X$. 

We now compute     the co-Jacobiator of   $\nu_X$. 
By Section~\ref{A topological example}, each gate $\alpha_k$ of~$X$ determines cyclically symmetric  cobrackets $\{\gamma_{  \alpha_k,m}\}_{m\geq 1}$ in   $M $.
The  map  
$$\gamma^m=\sum_{k\in \pi_0} \gamma_{  \alpha_k,m} :M\to M^{\otimes m} $$
is a cyclically symmetric $m$-cobracket in $M$ called the \emph{total gate $m$-cobracket}
  of~$X$.
Though we shall not need it, note  that  $\gamma^1=0$.

\begin{theor}\label{MAIN2}   The module $M=M(X)$ endowed with the 2-cobracket $\nu=\nu_X$ and the total gate 
 3-cobracket $\gamma=\gamma^3$  is a quasi-Lie coalgebra. 
\end{theor}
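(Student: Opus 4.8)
The plan is to reduce Theorem~\ref{MAIN2} to the identity
$$ j_{\nu_X} = E\circ\gamma^3 : M \to M^{\otimes 3}, $$
which, by the definition of a quasi-Lie coalgebra in Section~\ref{+algprelcobr} and the fact (already observed) that $\gamma^3$ is cyclically symmetric and $\nu_X$ is skew-symmetric, is the only thing left to check. Since both sides are linear maps $M\to M^{\otimes 3}$ and $M$ is free on $L(X)$, it suffices to evaluate both sides on $\langle a\rangle$ for a single generic loop $a$ in~$X$; by the genericity discussion in Section~\ref{notat} every class is represented this way. I would fix a gate orientation~$\omega$, write $\nu_X = \nu_{X,\omega} - P\nu_{X,\omega}$, and expand $\nu_X^2 = (\nu_X\otimes\id_M)\circ\nu_X$ applied to $\langle a\rangle$ using the explicit formula of Lemma~\ref{svvvvtreebucte}(i). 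This produces a sum of terms indexed by a self-intersection $r\in\#a$ or a chord $(k,p_1,p_2)\in T_\omega(a)$ at the outer level, followed by a self-intersection or chord of one of the two resulting sub-loops at the inner level.

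The key step is a bookkeeping/geometric classification of these nested terms. Each inner self-intersection or chord of a sub-loop $a^i_r$ or $a_{p_2,p_1}$ corresponds to a self-intersection, a chord of~$a$, or a chord connecting two different arcs of~$a$ — and after applying the cyclic symmetrizer $I+Q+Q^2$ defining $j_{\nu_X}$ these reorganize into: (1) triple-self-intersection terms that cancel in triples exactly as in the surface case (this is the computation underlying the Lie-coalgebra identity of~\cite{Tu2}, which here contributes nothing new because the pure-surface part of $\nu_X$ already satisfies co-Jacobi on $\Sigma'$); (2) mixed terms with one self-intersection of~$a$ and one gate chord, which I expect to cancel in pairs coming from the two ways the self-intersection can split relative to the chord; and (3) terms with two gate chords at gates $\alpha_k, \alpha_\ell$, which are precisely the terms assembling into $\gamma_{\alpha_k,3}$ (when $k=\ell$) or vanishing (when the intermediate sub-loop is contractible). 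Summing over $k$ gives $\gamma^3$, and tracking the orientation signs $\varepsilon(\omega,k),\varepsilon_{p}(a)$ together with the order relation $<_\omega$ produces exactly the antisymmetrization operator $E$ of~\eqref{HHH}; the $\omega$-dependence must drop out, consistent with the $\omega$-independence already asserted for $\nu_X$ in Lemma~\ref{svvvvtreebucte}(ii).

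The main obstacle will be step (2) and the careful sign/ordering analysis: one must check that when a self-intersection point $r$ and a chord $(k,p_1,p_2)$ interact, the contributions from the various cyclic rotations and from $P\nu_{X,\omega}$ pair up correctly, using that the loops $a_{p_i,p_j}$ are well defined up to homotopy and that loops lying in a gate are contractible in~$X$ (so any sub-loop "pinched off" inside a single gate dies in~$M$, which is what kills the unwanted $k=\ell$ terms and makes the $\langle-\rangle_0$ truncation behave). A secondary technical point is that one should verify a priori that $j_{\nu_X}(\langle a\rangle)$ is independent of the generic representative~$a$; this follows by the same move-by-move argument as in Lemma~\ref{svvvvtrbucte} and Lemma~\ref{svvvvtssarbucte}, checking invariance under the type-(i) and type-(ii) homotopies, but it is worth stating explicitly so that the final identity is well posed. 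Once these cancellations are organized, the remaining terms literally are the definition of $E\circ\gamma^3(\langle a\rangle)$, completing the proof.
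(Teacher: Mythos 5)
Your overall strategy (verify $j_{\nu_X}=E\circ\gamma^3$ on generators, expand $\nu_X^2$, classify nested terms, cancel) is the right shape, but you are missing the single idea that makes the paper's proof tractable: the reduction to \emph{simple} loops. By Lemma~\ref{Loopsiemnn} every free homotopy class is represented by a loop with \emph{no} self-intersections in $\Sigma'$, so for such a representative the formula for $\nu_X(\langle a\rangle)$ contains only gate-chord terms and the entire analysis reduces to ordered pairs of chords. Your classes (1) and (2) --- the pure self-intersection terms and the mixed self-intersection/chord terms --- simply never arise. As written, your class (2) is a genuine gap: you only say you ``expect'' the mixed terms to cancel in pairs, and establishing that cancellation directly for a generic (non-simple) representative would be a substantial computation that you have not carried out; likewise your appeal to the surface-case co-Jacobi computation for class (1) is not a proof that those terms vanish here.

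Your description of the two-chord terms is also inaccurate in a way that matters. The correct trichotomy (which the paper establishes) is: \emph{unlinked} pairs of chords --- including unlinked pairs at the same gate --- are annihilated by $I+Q+Q^2$ because the two orderings of such a pair produce opposite contributions after cyclic symmetrization, not because any sub-loop is contractible; a pair consisting of the same chord twice contributes zero (here contractibility of a loop pinched off near a gate does enter, in the case $\varepsilon_{p_1}=-\varepsilon_{p_2}$); and only pairs of chords sharing exactly one endpoint (hence at a single gate, with three distinct endpoints) survive, and these assemble --- after a careful case analysis on the cyclic order of the three points and the signs, using the function $\eta$ --- into $E\circ\gamma^3(\langle a\rangle)$. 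Your statement that same-gate pairs ``assemble into $\gamma_{\alpha_k,3}$'' and cross-gate pairs ``vanish when the intermediate sub-loop is contractible'' conflates these cases and would not survive the bookkeeping. Your secondary point about well-posedness of $j_{\nu_X}(\langle a\rangle)$ is unnecessary: once $\nu_X$ is known to be well defined on $M$ (Lemma~\ref{svvvvtreebucte}), so is any composition of it.
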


 Theorem~\ref{MAIN2}  is proved in Section~\ref{section3ghgh22}.
Next, we  compute  the coboundary $\partial \nu$ of $\nu=\nu_X$ via  the gates (at least in the case where $1/2\in R$).
  By Section~\ref{Gate bi-endos}, each gate $\alpha_k$ of~$X$ yields  a bi-endomorpism $\zeta_{\alpha_k}$ of~$M$.  The   map 
 \begin{equation}\label{zzet} \zeta=\sum_{k \in \pi_0} \zeta_{\alpha_k}: M^{\otimes 2} \to M^{\otimes 2} \end{equation}
 is  called  the \emph{total gate bi-endomorphism} of the module $M=M(X)$. 
 
 
 \begin{theor}\label{MAIN3} Let $e\in L(X) \subset M$ be the   homotopy class of contractible loops.  Then $$\delta (\zeta) =2 \,\partial \nu \, \,\,  {\rm  {mod}} (Re\otimes M+M\otimes Re).$$
 \end{theor}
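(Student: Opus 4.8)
The plan is to compute both sides of the congruence directly on generic loops and compare the contributions term by term. First I would fix a gate orientation $\omega$ and use Lemma~\ref{svvvvtreebucte}(i) to write $\nu_{X,\omega}(\langle a\rangle)$ for a generic loop $a$, then expand $\partial\nu(\langle a\rangle\otimes\langle b\rangle)$ using the definition $\partial\nu(x\otimes y)=\nu([x,y])-\mathrm{ad}_x(\nu(y))+\mathrm{ad}_y(\nu(x))$, where $[-,-]=[-,-]_X$ and $\nu=\nu_X$. The point is that $\partial\nu$ is built from objects that are each \emph{already} known not to depend on $\omega$, so I may compute $\partial\nu$ using the $\omega$-asymmetric representatives $\nu_{X,\omega}$ and $[-,-]_{X,\omega}$ and symmetrize at the end; concretely $\nu_X=\nu_{X,\omega}-P\nu_{X,\omega}$ and $[-,-]_X=[-,-]_{X,\omega}-[-,-]_{X,\omega}^t$. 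Combining these with $\partial\nu\circ P=-\partial\nu=P\circ\partial\nu$, the net effect is that $\partial\nu(\langle a\rangle\otimes\langle b\rangle)$ is a signed sum over the various ways a self-intersection or a chord of $ab$ (or of $a$, or of $b$) interacts with a chord of the pair, or with an intersection point $r\in a\cap b$.

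The key bookkeeping step is a case analysis of the points involved. On one hand, $\nu([\langle a\rangle,\langle b\rangle])$ requires representing the product loop $a_rb_r$ (for $r\in a\cap b$) or $a_pb_q$ (for a chord $(k,p,q)$) by a generic loop and listing its self-intersections in $\Sigma'$ and its chords with each gate $\alpha_\ell$; the self-intersections split into those coming from $\#a$, those from $\#b$, the old intersection $r$ (or the crossing induced by the chord near $\alpha_k$), and the ``mixed'' points of $a\cap b$ other than $r$; the chords of $a_rb_r$ split analogously. On the other hand, $\mathrm{ad}_{\langle a\rangle}(\nu(\langle b\rangle))$ inserts $a$ (via the bracket $[-,-]_X$, hence via intersection points and chords) into each tensor factor of $\nu_X(\langle b\rangle)$. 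I expect massive cancellation: the terms where $a$ is inserted far from the gate, or where the intersection point lies in $\#a$ or $\#b$, cancel between $\nu([\langle a\rangle,\langle b\rangle])$ and the two $\mathrm{ad}$ terms, by the same mechanism that makes Goldman's bracket and Turaev's cobracket compatible in the closed-surface case \cite{Tu2}. What survives is supported near the gates: precisely the configurations counted by $\zeta(a,b)$ in \eqref{zetaminus} — one family giving the term $\lvert a\rvert_C\,(b\cdot C)\,\langle a\rangle_0\otimes\langle b\rangle_0$ (from pairing a chord endpoint of $a$ on a gate with the $(b\cdot C)$-weighted copy of $b$), and one family giving the double sum over $(p_1,p_2)\in S_2(a)$ and $q\in b^{-1}(\alpha_k)$ with the loop $a_{p_2,p_1}\circ_{\alpha_k}b_q$. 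The factor $2$ and the $\delta(\zeta)=\overline P\zeta\overline P$ on the left-hand side should match the antisymmetrization $\nu_{X,\omega}\mapsto\nu_{X,\omega}-P\nu_{X,\omega}$ combined with the analogous antisymmetrization in $a\leftrightarrow b$, using \eqref{equis} if needed to trade $\zeta^{eq}$ for $\zeta$.

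Finally I would explain where the $\bmod\,(Re\otimes M+M\otimes Re)$ comes in: whenever the bracket or cobracket produces a contractible loop as one tensor factor, the corresponding term is $\langle\cdot\rangle_0=0$ in some of the formulas but equals $e$ in others (e.g. the difference between $\langle a_{p_1,p_2}\rangle$ and $\langle a_{p_1,p_2}\rangle_0$, or a null-homotopic ``small'' loop created when $a$ is pushed across a gate), so the identity $\delta(\zeta)=2\,\partial\nu$ holds only after killing such factors; I would isolate each place where a contractible factor can appear and check it lands in $Re\otimes M+M\otimes Re$. \textbf{The main obstacle} will be the sheer size and delicacy of this case analysis — in particular, correctly matching the orientation signs $\varepsilon(\omega,k)\varepsilon_{p}(a)\varepsilon_{q}(b)$ coming from the gate terms of $[-,-]_{X,\omega}$ and $\nu_{X,\omega}$ against the signs built into $\zeta$, and verifying that every ``mixed'' term not supported at the gates genuinely cancels, so that the residue is exactly $\tfrac12\delta(\zeta)$ modulo the ideal generated by $e$. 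I would organize this by first doing the computation for loops lying entirely in $\Sigma'$ (where $\partial\nu=0$ and $\zeta=0$, a consistency check), then allowing a single gate crossing, then the general case, reducing everything to the elementary local models already used in the proofs of Lemmas~\ref{strleleucte}, \ref{svvvvtrbucte}, and \ref{svvvvtssarbucte}.
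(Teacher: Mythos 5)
Your plan goes in the right general direction (compute $\partial\nu$ on representative loops, sort the contributions, and recognize the gate configurations of $\zeta$), and you correctly locate where the reduction mod $Re\otimes M+M\otimes Re$ enters. But the step you yourself flag as ``the main obstacle'' --- verifying that every term of $\partial\nu(\langle a\rangle\otimes\langle b\rangle)$ involving a self-intersection in $\#a$ or $\#b$, or a crossing $r\in a\cap b$ in $\Sigma'$, genuinely cancels --- is not an obstacle you can wave away by analogy with the closed-surface case; it is the entire content of the theorem along your route, and your proposal gives no argument for it. The paper avoids this problem completely by a simplifying idea you are missing: by Lemma~\ref{Loopsiemnn}, every pair $x,y\in L(X)$ can be represented by a \emph{simple} pair of loops $a,b$, i.e.\ loops with no self-intersections and no mutual intersections in $\Sigma'$ at all (all crossings having been pushed across the gates). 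With such representatives, $\#a=\#b=\emptyset$ and $a\cap b\cap\Sigma'=\emptyset$, so every single term of $\nu([x,y])$, $\mathrm{ad}_x(\nu(y))$, and $\mathrm{ad}_y(\nu(x))$ is indexed by an ordered pair of gate chords; there are no ``surface'' terms to cancel and no appeal to the compatibility of \cite{Tu2} is needed.

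Once this reduction is made, the paper's proof is a finite combinatorial bookkeeping: it classifies the contributing ordered pairs of chords into five types, groups them by the set $S$ of endpoints of the two chords, and shows that $\card(S)=4$ gives total contribution zero, while $\card(S)=2$ and $\card(S)=3$ assemble (after multiplying by $2$ and working modulo $Re\otimes M+M\otimes Re$) into exactly $\overline P\,\zeta\,\overline P(x\otimes y)=\delta(\zeta)(x\otimes y)$. Your proposed organization --- first loops in $\Sigma'$, then one gate crossing, then the general case --- does not converge to this argument and leaves the cancellation of the mixed surface/gate terms unestablished, so as written the proposal has a genuine gap. If you replace ``generic'' by ``simple'' representatives at the outset, the rest of your bookkeeping can be carried through essentially as the paper does.
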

 
 Theorem~\ref{MAIN3}  is proved in Section~\ref{coorcompadfT0}. 
 This theorem  suggests to consider  the    quotient module $M_\circ=M/Re$. We can represent  $e\in   M$  by an embedded circle   in a small disc in $\Int(\Sigma')$. This easily implies that
$$\mu (Re \times M\times M)=\mu (  M\times Re \times  M)
=\mu (  M \times  M \times Re ) =0,$$
$$[Re, M]= [M,Re]= 0, \,\, \nu(Re)=0, \,\, \gamma(Re)=0, \,\, \zeta  (Re \times M )=\zeta (  M\times Re)=0.$$ 
Denoting the projection  $M\to M_\circ$  by~$\psi$, we deduce the existence and uniqueness of   maps $[-,-]_\circ, \mu_\circ, \nu_\circ, \gamma_\circ, \zeta_\circ$  such that the following five diagrams commute: 
$$
\xymatrix{
M \times M \ar[rr]^-{[-,-]} \ar[d]_{\psi \times \psi} & & M\ar[d]^-{\psi}\\
{M_\circ \times M_\circ} \ar[rr]^-{[-,-]_\circ}&  &M_\circ
}, \quad  \xymatrix{
M \times M \times M \ar[rr]^-{\mu} \ar[d]_{\psi \times \psi \times \psi} & & M\ar[d]^-{\psi}\\
{M_\circ \times M_\circ \times M_\circ} \ar[rr]^-{\mu_\circ}&  &M_\circ
},
$$
$$
\xymatrix{
M   \ar[r]^-{\nu} \ar[d]_{  \psi}  & M^{\otimes 2}\ar[d]^-{\psi^{\otimes 2}}\\
{M_\circ } \ar[r]^-{\nu_\circ}  &M_\circ^{\otimes 2}
}, \quad  \xymatrix{
M   \ar[r]^-{\gamma} \ar[d]_{  \psi}  & M^{\otimes 3}\ar[d]^-{\psi^{\otimes 3}}\\
{M_\circ} \ar[r]^-{\gamma_\circ}  &M_\circ^{\otimes 3}
}, \quad  \xymatrix{
M^{\otimes 2}   \ar[r]^-{\zeta} \ar[d]_{  \psi^{\otimes 2}}  & M^{\otimes 2}\ar[d]^-{\psi^{\otimes 2}}\\
{M_\circ^{\otimes 2} } \ar[r]^-{\zeta_\circ}  &M_\circ^{\otimes 2}
}.
$$

Theorems \ref{MAIN}, \ref{MAIN2}, \ref{MAIN3} and Formula~\eqref{equis} imply the following:
 
\begin{corol}\label{MAIN4}   If $1/2\in R$, then the module $M_\circ$    with  brackets $[-,-]_\circ, \mu_\circ$, cobrackets $\nu_\circ, \gamma_\circ$, and   bi-endomorphism $(1/4) (\zeta_\circ)^{eq}$
  is   a quasi-Lie bialgebra.    
\end{corol}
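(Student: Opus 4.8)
The plan is to assemble the corollary mechanically from the three main theorems together with the remarks in Section~\ref{+algprelcobssar}, taking care only of the passage to the quotient $M_\circ$ and of the factor $1/4$. First, I would invoke the commutativity of the five displayed diagrams to transport all structure maps to $M_\circ$: the pair $([-,-]_\circ,\mu_\circ)$ is a pair of brackets (skew-symmetry of $[-,-]_\circ$ and cyclic symmetry of $\mu_\circ$ are inherited from $[-,-]_X$ and $\mu^3$ via the surjection $\psi$), and likewise $(\nu_\circ,\gamma_\circ)$ is a pair of a skew-symmetric $2$-cobracket and a cyclically symmetric $3$-cobracket on $M_\circ$. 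The key point here is that each defining identity (the quasi-Jacobi identity~\eqref{JJJacoee}, its co-dual, and the coboundary relation) is an equality of maps out of a product of copies of $M$ or into a product of copies of $M_\circ$, so it descends along $\psi$, $\psi\otimes\psi$, etc.; no new verification is needed beyond surjectivity of $\psi$ and the already-noted vanishing of every structure map on the $e$-lines.

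Next I would check the three structural axioms of a quasi-Lie bialgebra on $M_\circ$. That $([-,-]_\circ,\mu_\circ)$ is a quasi-Lie pair of brackets follows by applying $\psi$ to the identity of Theorem~\ref{MAIN}, $J_{[-,-]_X}(x,y,z)=\mu(x,y,z)-\mu(z,y,x)$, and using that $\psi$ intertwines $[-,-]$ with $[-,-]_\circ$ and $\mu$ with $\mu_\circ$ (and hence intertwines the Jacobiators). Dually, that $(\nu_\circ,\gamma_\circ)$ is a quasi-Lie pair of cobrackets follows from Theorem~\ref{MAIN2} by composing the identity $j_{\nu_X}=E\circ\gamma$ with $\psi^{\otimes 3}$ on the target and precomposing with a chosen lift of an element of $M_\circ$; here one uses that $\psi^{\otimes 3}$ commutes with $Q$ and with $E$, and that $\psi^{\otimes 2}\circ\nu_X=\nu_\circ\circ\psi$ forces $\psi^{\otimes 3}\circ\nu_X^2=\nu_\circ^2\circ\psi$.

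For the bialgebra compatibility, set $\zeta'=(1/4)(\zeta_\circ)^{eq}$; this is equivariant by the example in Section~\ref{+algprelcobssar} (any $\zeta^{eq}$ is equivariant), so it only remains to verify $\partial\nu_\circ=\delta(\zeta')$. By~\eqref{equis} we have $\delta((\zeta_\circ)^{eq})=2\,\delta(\zeta_\circ)$, hence $\delta(\zeta')=(1/2)\,\delta(\zeta_\circ)$. On the other hand Theorem~\ref{MAIN3} gives $\delta(\zeta)=2\,\partial\nu$ modulo $Re\otimes M+M\otimes Re$, and applying $\psi^{\otimes 2}$ kills that ambiguity and, using that $\psi^{\otimes 2}$ intertwines $\zeta$ with $\zeta_\circ$ and $\partial\nu$ with $\partial\nu_\circ$ (the latter because $\psi$ intertwines $[-,-]$ with $[-,-]_\circ$ and $\nu$ with $\nu_\circ$, hence $\mathrm{ad}$ with $\mathrm{ad}$ and $\partial\nu$ with $\partial\nu_\circ$), yields $\delta(\zeta_\circ)=2\,\partial\nu_\circ$ exactly in $M_\circ^{\otimes 2}$. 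Combining, $\delta(\zeta')=(1/2)\,\delta(\zeta_\circ)=\partial\nu_\circ$, which is the required identity. The only place the hypothesis $1/2\in R$ is used is to make sense of the coefficient $1/4$; everything else is formal. The main (mild) obstacle is bookkeeping: checking that $\psi$ and its tensor powers genuinely intertwine every auxiliary operator ($Q$, $E$, $\mathrm{ad}_z$, $P$, $\overline P$, $\delta$, $(-)^{eq}$) appearing in the definitions, so that each identity descends; this is routine since all these operators are built from permutations of tensor factors and from $[-,-]$, which are manifestly natural with respect to $\psi$.
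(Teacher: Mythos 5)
Your proposal is correct and is exactly the argument the paper intends: the paper offers no written proof beyond the assertion that Theorems~\ref{MAIN}, \ref{MAIN2}, \ref{MAIN3} and Formula~\eqref{equis} imply the corollary, and your write-up simply makes that deduction explicit (descent along the surjection $\psi$ using the vanishing of all structure maps on the $e$-lines, plus the computation $\delta\bigl((1/4)(\zeta_\circ)^{eq}\bigr)=(1/2)\,\delta(\zeta_\circ)=\partial\nu_\circ$). The bookkeeping you flag is indeed routine and your handling of the factor $1/4$ and of the hypothesis $1/2\in R$ matches the paper.
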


\subsection{Remark} M. Chas \cite{Ch} proved that the Lie bialgebras of loops on (ordinary) surfaces are  involutive.
It would be interesting to extend this result to the quasi-Lie bialgebra  of Corollary~\ref{MAIN4}.


 \section{Proof of Theorem~\ref{MAIN} }\label{section3ghgh}

  \subsection{Preliminaries}\label{section3loops+} 
A finite family  of loops in the quasi-surface~$X$  is said to be \emph{simple} if   these loops  meet the gates of~$X$ transversely and have no crossings or  self-crossings    in the reduced surface core~$\Sigma'\subset X$.  In particular, these loops do not meet at the gates. A simple family of loops is necessarily generic.

 \begin{lemma}\label{Loopsiemnn}  Any finite family of loops in~$X$  can  be deformed in~$X$ into a simple family of loops.
\end{lemma}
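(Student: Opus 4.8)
The plan is to reduce everything to a local picture near the gates and then a small perturbation in general position. First I would recall that the quasi-surface $X$ is the union of the reduced surface core $\Sigma'$ and the singular part $Y$, glued along the gates $\{\alpha_k\}_{k\in\pi_0}$, and that each gate carries a cylinder neighborhood $\alpha_k\times[-1,1]$. Given a finite family of loops $a_1,\dots,a_n$ in $X$, the first step is to make the family \emph{generic} in the sense of Section~\ref{notat}: by the remark there, a small deformation using the cylinder neighborhoods of the gates arranges that each loop meets each gate transversely at finitely many interior points, that no two loops meet at a gate, and that all intersections and self-intersections inside $\Sigma'$ are double transversal points in $\Int(\Sigma')$. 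So without loss of generality the family is already generic.

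The second and main step is to remove the crossings and self-crossings that lie in $\Sigma'$. Here the key point is that all loops in the gates $\alpha_k$ are contractible in $X$, and more usefully, the portion of each $a_i$ lying in $\Sigma'$ consists of finitely many embedded arcs (``branches'') with endpoints on the gates. The strategy is: for each branch $\beta$ of each $a_i$ running through $\Sigma'$, homotope $\beta$ rel its endpoints so that it is pushed out of $\Int(\Sigma')$ and runs very close to the boundary $\partial\Sigma'$, i.e.\ into a collar of $\partial\Sigma'$, entering and leaving only through the cylinder neighborhoods of the gates. Since $\Sigma'$ is a surface with boundary, it deformation retracts onto a graph sitting in (a spine near) $\partial\Sigma'\cup(\text{1-skeleton})$; more concretely, any arc with endpoints on $\partial\Sigma'$ can be homotoped rel endpoints into an arbitrarily small neighborhood of $\partial\Sigma'$ together with finitely many disjoint proper arcs representing a basis of $H_1$. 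One then arranges all these ``rail'' arcs to be disjoint and each branch of each $a_i$ to run parallel to the rails, so that any two branches, and any branch with itself, meet $\Sigma'$ in disjoint parallel copies. After a final transversality perturbation inside the thin collar — which can be taken to keep everything near $\partial\Sigma'$ and hence not create intersections in $\Int(\Sigma')$ — the loops have no crossings or self-crossings in $\Sigma'$ and still meet the gates transversely. That is precisely a simple family.

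The third step is bookkeeping: I would check that the deformation just constructed can be performed simultaneously for all loops of the family (order the branches, push them one at a time, each push supported in a small neighborhood of the branch being moved so as not to disturb branches already made simple) and that it is a genuine homotopy in $X$, not merely in $\Sigma'$ — this is automatic since the homotopies are supported in $\Sigma'\subset X$. I would also note that the loops are allowed to wander freely through the singular part $Y$; we never move them there, so no difficulty arises on the $Y$ side.

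The hard part will be the second step: making precise that the branches in $\Sigma'$ can be made pairwise disjoint (and individually embedded) up to homotopy rel endpoints with all endpoints on the gates. This is where one needs the fact that $\Sigma'$ is a surface with nonempty boundary — hence homotopy equivalent to a wedge of circles, with the gates sitting in its boundary — so that any system of properly immersed arcs with endpoints on $\alpha=\bigsqcup_k\alpha_k$ can be homotoped (rel endpoints) to a system of disjoint simple arcs, by an innermost-disk/bigon-removal argument for surfaces. One must be slightly careful that different branches may need to cross the \emph{same} gate, so the disjointness is achieved not on the nose but after pushing the finitely many endpoints on each gate to distinct points in its interior — which is exactly what genericity already gives us. Assembling these local moves into one ambient deformation of the whole family, and verifying it stays within the class of loops meeting the gates transversely, is the technical heart; everything else is routine.
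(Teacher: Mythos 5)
Your second step has a genuine gap, and it is the heart of the matter. You propose to homotope the branches of the loops rel their endpoints on the gates until they become pairwise disjoint embedded arcs in $\Sigma'$, invoking an innermost\nobreakdash-bigon argument and a system of ``rails'' near $\partial\Sigma'$. But two properly embedded arcs in a surface whose endpoints are linked along the boundary, or which lie in different homotopy classes rel endpoints, have a \emph{positive} minimal geometric intersection number; bigon removal only brings them to minimal position, it does not make them disjoint. Likewise, branches routed along different sequences of rails of a spine must cross near the junctions, and a collar of $\partial\Sigma'$ is still contained in $\Sigma'$, so intersections there still violate simplicity. A quick sanity check: if every finite family of loops could be made crossing\nobreakdash-free by homotopies supported in $\Sigma'$ rel the gates, then the bracket $[-,-]_X$ (which restricts to Goldman's bracket on loops in $\Sigma'$) would vanish identically; it does not. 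So the disjointness you assert in step two is simply not achievable by rel\nobreakdash-endpoints homotopies inside the surface core.

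The mechanism you are missing is that the loops must be allowed to escape from $\Sigma'$ through the gates, changing the combinatorics of the branches. The paper's proof picks a crossing $r\in\# a$, follows one branch of~$a$ from~$r$ past the remaining double points $r_1,\dots,r_n$ until it first hits a gate $\alpha_k$ at a point~$p$; the final segment~$s$ from $r_n$ to~$p$ is embedded and meets $\# a$ only at~$r_n$. One then finger\nobreakdash-pushes the transversal branch at $r_n$ along~$s$ and \emph{across} $\alpha_k$ into $X\setminus \Sigma'$. This is a free homotopy in~$X$ (not rel endpoints: it subdivides one branch into two and increases $\card(a\cap\alpha_k)$ by~$2$) which decreases $\card(\# a)$ by~$1$; induction, applied also to crossings between distinct loops of the family, finishes the argument. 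Your first and third steps (genericity, and performing local moves one at a time) are fine, but without the push\nobreakdash-across\nobreakdash-the\nobreakdash-gate move --- which uses the hypothesis that $X$ has at least one gate --- the argument cannot be completed.
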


\begin{proof} Consider first a   single loop in~$X$. Since~$X$ is path-connected and contains a gate, we can deform our  loop  into a  generic loop~$a$ which  meets a gate.
 If the set $\# a$ of double points of~$a$ in $\Int(\Sigma')$ is empty, then we are done. 
 Otherwise, pick a  point $r \in \# a$. Starting   at $r=r_0$ and moving along~$a$   we     meet several double  points $r_1,..., r_n  \in \# a$ with $n\geq 0$ and then  come to a  point $p\in a\cap \alpha_k$ in  a certain   gate $\alpha_k$.   The  segment   of~$a$   running from  $r_n$ to~$p$  is  embedded    in $\Sigma'$  and meets  $  \# a$ only at  $r_n$. Denote this segment by~$s$ and let~$t$  be   the branch of~$a$ transversal to~$s$ at   $r_n$.   Push~$t$ towards~$p$ along~$s$   keeping~$t$   transversal  to~$s$ and eventually push~$t$ across  $\alpha_k$ at~$p$. This deformation of~$a$ increases    $\card (a \cap \alpha_k)$ by 2 and   decreases ${\rm {card}} (\# a)$ by 1. Continuing by induction, we    deform~$a$ into a generic loop without   self-intersections  in~$\Sigma'$.  
 If the original  family of loops contains two or more  loops, then we first deform it into a generic family of loops which  all  meet some gates. Then, as above,  pushing branches at crossings and self-crossings across the gates, we obtain a simple family of loops. \end{proof} 
 

  \subsection{Proof of the theorem}\label{section3ghvvgh}   
 Fix an arbitrary gate orientation~$\omega$ of~$X$ and let~$\overline \omega$   be  the  gate orientation of~$X$   opposite to~$\omega$ on all  gates. To shorten our formulas,  we set $x\bullet y  =[x,y]_{X, \omega}$ for any $x,y \in M={M}(X)$.
If $x,y \in {L}(X)\subset M$ are represented by a generic pair of loops $a,b$, then 
    \begin{equation}\label{eenewidyvKKs}     x \bullet   y  =  \sum_{r\in a\cap b} \varepsilon_r  (a,b) \langle a_r b_r \rangle +\sum_{(k,p,q)\in T_\omega (a,b) }\,    \varepsilon(\omega, k)   \,\varepsilon_p(a)  \,\varepsilon_q(b) \langle a_pb_q \rangle. \end{equation} Note that the inclusion $(k,p,q)\in T_\omega (a,b)$ holds if and only if $(k,q,p)\in T_{\overline \omega} (b,a)$. This  and the obvious  identities $$\langle a_r b_r \rangle=  \langle b_r a_r \rangle, \quad  \varepsilon_r(a,b)=- \varepsilon_r(b,a), \quad \varepsilon ( \overline \omega, k)= - \varepsilon (   \omega, k)$$ imply  that \begin{equation}\label{chorisdfbulle}   
 [x,y]_{X, \overline\omega} =- [y,x]_{X,   \omega} =-y \bullet x  .   \end{equation}
By the bilinearity of the brackets, the same  holds  for all $x,y\in M$. 

For arbitrary $x,y,z\in M$,  set 
   \begin{equation}\label{newidyvKK}     u_\omega(x,y,z)= (x \bullet   y) \bullet  z  +(y \bullet   z) \bullet  x+(z \bullet   x) \bullet  y.     \end{equation}
By \eqref{chorisdfbulle},
\begin{equation}\label{idyvg3n+} u_{\overline \omega} (x,y,z)=[[x,y]_{X, \overline \omega}, z]_{X, \overline \omega}+ [[y,z]_{X, \overline \omega}, x]_{X, \overline \omega}+[[z,x]_{X, \overline \omega}, y]_{X, \overline \omega}
\end{equation}$$=
z \bullet  ( y  \bullet x)  + x \bullet  ( z \bullet  y)+ y \bullet   (x \bullet  z)  . $$
We now apply Lemma~\ref{sDDfffDtrbucte}    to  the
  bilinear form $M^{ 2} \to M, (x,y)\mapsto x \bullet y$. The 2-bracket $[-,-]$   in this lemma is  the bracket $[-,-]_X$. Formulas~\eqref{newidyvKK}  and \eqref{idyvg3n+} show  that the 3-bracket   $[-,-,-]$ in Lemma~\ref{sDDfffDtrbucte}  is equal to $u _\omega +u_{\overline \omega}$. By~\eqref{JJJacoee}, to prove the theorem it suffices   to show  that    for all $x,y,z\in M$, we have  \begin{equation}\label{key}
[x,y,z] - [z, y,x]= \mu^3(x,y,z) - \mu^3 ( z, y, x) .  \end{equation} 

 Since both sides  of~\eqref{key} are linear in $x,y,z$, it suffices to handle  the case   $x,y,z\in {L}(X)  \subset M$.
By Lemma~\ref{Loopsiemnn},    we can represent the triple $x,y,z$ by a simple  triple of loops  $a,b,c$  in~$X$. Then  Formula~\eqref{eenewidyvKKs} simplifies to
\begin{equation}\label{chorieee++vj}  x \bullet y      
= \sum_{(k,p,q) \in  T_\omega (a,b) }\,    \varepsilon(\omega, k)   \,\varepsilon_p(a)  \,\varepsilon_q(b) \, \langle a_p  b_q  \rangle \in M \end{equation}
 where   $a_p, b_q $ are   loops reparametrizing $a,b$ and based respectively in $p, q \in \alpha_k$. Here
  $a_p b_q= a_p \ell  b_q \ell^{-1} $  for a path  $\ell=\ell (p,q)$    in   the gate $\alpha_k$ from~$p$ to~$q$. We deform the loop $a_p b_q $    by  slightly pushing its subpaths~$\ell$ and $  \ell^{- 1}$    into $ X\setminus   \Sigma'$.  (The endpoints $p,q$ of these subpaths  are pushed  into $ X\setminus   \Sigma'$ along $a,b$, respectively.) The resulting  loop is denoted   by $a\circ_{p,q}b$ or by $b\circ_{q,p} a$. Thus,   \begin{equation}\label{monoee} 
 x \bullet y   =  \sum_{(k,p,q) \in T_\omega (a,b) }\,   \varepsilon(\omega, k)   \,\varepsilon_p(a)  \,\varepsilon_q(b) \, \langle a\circ_{p,q}b \rangle. \end{equation}
 Note that the loop $a\circ_{p,q}b$  is simple;   moreover,  the pair of loops   $a\circ_{p,q}b, \, c$   is simple.
Applying~\eqref{chorieee++vj} to this pair, we get 
$$
\langle a\circ_{p,q}b \rangle  \bullet z   =  \sum_{(l,s,t) \in  T_\omega (a\circ_{p,q}b, c) }\,    \varepsilon(\omega, l)   
\,\varepsilon_s(a\circ_{p,q}b)  \,\varepsilon_t(c)\,  \langle (a\circ_{p,q}b)_s c_t \rangle. $$
For each  $l\in \pi_0$,   the set     $(a\circ_{p,q} b)\cap  \alpha_l$ is a disjoint  union of   the sets $a\cap \alpha_l$ and $b\cap \alpha_l$. 
   Therefore for  every triple  $(k,p,q) \in T_\omega (a,b)$, we have  $$T_\omega (a\circ_{p,q}b, c) =T_\omega (a , c)  \sqcup  T_\omega ( b, c)  $$  and   $$ \langle a\circ_{p,q}b \rangle  \bullet z  =\lambda_\omega(k,p,q) + \rho_\omega(k,p,q)$$ where
\begin{equation}\label{varp}\lambda_\omega(k,p,q)=  \sum_{(l,s,t)\in T_\omega (a,c) }\,    \varepsilon(\omega, l)   
\,\varepsilon_s(a)  \,\varepsilon_t(c) \, \langle (a\circ_{p,q}b)_s c_t \rangle \in M, \end{equation}
\begin{equation}\label{psps}\rho_\omega (k,p,q)=  \sum_{(l,s,t) \in T_\omega (b,c) }\,    \varepsilon(\omega, l)   
\,\varepsilon_s(b)  \,\varepsilon_t(c) \, \langle (a\circ_{p,q}b)_s c_t \rangle \in M .\end{equation}
Combining   with \eqref{monoee}  we obtain   
\begin{equation}\label{monoeg} (x \bullet y )\bullet z= \sum_{(k,p,q) \in T_\omega (a,b) }\,   \varepsilon(\omega, k)   \,\varepsilon_p(a)  \,\varepsilon_q(b)\, \big (\lambda_\omega(k,p,q) +\rho_\omega(k,p,q) \big ). \end{equation}

   
   We now compute the right-hand sides of Formulas \eqref{varp}-\eqref{monoeg}.
The free homotopy class   $\langle (a\circ_{p,q}b)_s c_t \rangle$  in~\eqref{varp} is represented by the loop $b\circ_{q,p} a_{s,t}  \circ c$ obtained by grafting~$b$ and~$c$ to~$a$  using  a  path  in $\alpha_k$ from $p\in a\cap \alpha_k$ to $ q\in b\cap \alpha_k$ and a path  in $\alpha_l$  from $s\in a\cap \alpha_l$ to $t\in c\cap \alpha_l$. So,
\begin{equation}\label{monoeg+}\lambda_\omega(k,p,q)=  \sum_{(l,s,t)\in T_\omega (a,c)}\,    \varepsilon(\omega, l)   
\,\varepsilon_s(a)  \,\varepsilon_t(c) \, \langle b\circ_{q,p} a_{s,t}  \circ c \rangle. \end{equation}
To give  a more precise description of the loop $b\circ_{q,p} a_{s,t}  \circ c$, we distinguish  two classes of triples    $(l,s,t)\in T_\omega (a,c)$ determined by whether or not $s=p$.

Class 1: $s\neq p$ so that  the points $p ,q ,s ,t $ are  distinct (possibly, $l=k$). Then the loop $b\circ_{q,p} a_{s,t}  \circ c$  goes     along  $\alpha_k$  from~$p$ to~$q$,  along the full loop~$b$ from~$q$ to~$q$,   along $\alpha_k$ from~$q$ to~$p$,    along~$a$ from~$p$ to~$s$,    along   $\alpha_l$ from~$s$ to~$t$,   along the full  loop~$c$ from~$t$ to~$t$,   along~$\alpha_l$ from~$t$  to~$s$, and  finally along~$a$ from~$s$  to~$p$. 

Class 2: $s= p$ so that  $l=k$ and    $p ,q ,t \in \alpha_k$ are three distinct points.
If $\varepsilon_p(a)=+1$, then the loop $b\circ_{q,p} a_{s,t}  \circ c$     goes    along $\alpha_k$ from~$p$  to~$q$,  along the full loop~$b$ from~$q$ to~$q$,     along $\alpha_k$ from~$q$ to~$t$,  along the full  loop~$c$ from~$t$ to~$t$, along $\alpha_k$  from~$t$ to~$p$,  and finally  along the full loop~$a$ from~$p$   to~$p$.
 If $\varepsilon_p(a)=-1$, then the loop $b\circ_{q,p} a_{s,t}  \circ c$    goes    along $\alpha_k$ from~$p$  to~$q$,  along the full loop~$b$ from~$q$ to~$q$,     along $\alpha_k$ from~$q$ to~$p$,   along the full  loop~$a$ from~$p$ to~$p$,   along $\alpha_k$ from~$p$ to~$t$,  along the full loop~$c$ from~$t$ to~$t$,  and finally
   along~$\alpha_k$ back  to~$p$.

Similarly, the free homotopy class   $\langle (a\circ_{p,q}b)_s c_t \rangle$ in~\eqref{psps} is represented by the loop $a\circ_{p,q} b_{s,t}  \circ c$ obtained by grafting~$a$ and~$c$ to~$b$  via a path  in $\alpha_k$ from $p\in a\cap \alpha_k$ to $ q\in b\cap \alpha_k$ and  a path  in $\alpha_l$  from $s\in b\cap \alpha_l$ to $t\in c\cap \alpha_l$. A precise description  of this loop  involves  two classes of triples $(l,s,t) \in T_\omega ( b,c)$ determined by  whether or not $s=q$; we leave the details  to the reader. Thus,   
\begin{equation}\label{monoeg++}\rho_\omega (k,p,q)=  \sum_{(l,s,t) \in T_\omega ( b,c) }\,    \varepsilon(\omega, l)   
\,\varepsilon_s(b)  \,\varepsilon_t(c) \langle a\circ_{p,q} b_{s,t}  \circ c \rangle. \end{equation} 

   Substituting these expansions of  $\lambda_\omega, \rho_\omega$ in \eqref{monoeg}, we obtain an expansion of  $(x \bullet y )\bullet z$ as a sum. We call the summands   determined by     pairs  
$(k,p,q)\in  T_\omega (a,b)$, $ (l,s,t)\in  T_\omega (a,c)$ with $s\neq p$ the \emph{left 4-terms}. The   summands   determined by    pairs   $(k,p,q)\in  T_\omega (a,b)$, $ (l,s,t)\in  T_\omega (a,c)$ with $s=p$ (and $ l=k$) are  called  the \emph{left 3-terms}. Similarly,   the summands of $(x\bullet y) \bullet z$  determined by     pairs  
$(k,p,q)\in  T_\omega (a,b)$, $ (l,s,t)\in  T_\omega (b,c)$ with $s\neq q$ are called the \emph{right 4-terms}. The   summands of $(x\bullet y) \bullet z$ determined by    pairs   
$(k,p,q)\in  T_\omega (a,b)$, $ (l,s,t)\in  T_\omega (b,c)$ with $s=q$ (and $ l=k$) are  called  the \emph{right 3-terms}. Thus,  $(x\bullet y) \bullet z$  is  a sum of  (left and right) 3-terms  and  4-terms.
Further, using \eqref{newidyvKK}, we   expand   $u_\omega (x,y,z)$  as a sum of    3-terms  and  4-terms. Combining  with a parallel expansion of  $ u_{\overline \omega} (x,y,z) $   we get an expansion of   $ [x,y,z]=u_\omega(x,y,z)+  u_{\overline \omega} (x,y,z) $ as a sum of  3-terms  and  4-terms. The  total contribution   of  the    3-terms (respectively, 4-terms)  to $  [x,y,z]$ is denoted by $  P_3 (a,b,c)$ (respectively, $P_4 (a,b,c)$). Thus,  $$  [x,y,z] = P_3 (a,b,c)+  P_4 (a,b,c).$$

To proceed, we  simplify our notation.   Note   that   each point  $p,q,s,t$   in a 4-term or a 
3-term    is traversed by exactly  one of the loops $a,b,c$. We will write $\varepsilon_p, \varepsilon_q, \varepsilon_s, \varepsilon_t$ for the corresponding signs $\pm 1$. For example, $\varepsilon_p=\varepsilon_p (a), \varepsilon_q=\varepsilon_q(b)$, etc. Also,   set
  $\varepsilon(\omega, k, l) = \varepsilon(\omega, k)  \, \varepsilon(\omega, l) $.   In this notation, the       contribution of  the left 4-terms  to     $(x\bullet y) \bullet z$
is equal to 
$$\lambda_\omega^{a,b,c}= \sum_{ \begin{array}[b]{r}
     { (k,p,q) \in T_\omega (a,b) }\\
      {(l,s,t)\in T_\omega (a,c), s\neq p }
    \end{array} } \,  \varepsilon(\omega, k, l) \,\varepsilon_p \, \varepsilon_q \, \varepsilon_s \,\varepsilon_t \,  \langle b\circ_{q,p} a_{s,t}  \circ c \rangle.$$
    The  contribution of  the right 4-terms  to    $(x\bullet y) \bullet z$
is equal to 
$$\rho_\omega^{a,b,c}= \sum_{ \begin{array}[b]{r}
     { (k,p,q) \in T_\omega (a,b) }\\
      {(l,s,t)\in T_\omega (b,c), s\neq q }
    \end{array} } \,   \varepsilon(\omega, k, l)  \,\varepsilon_p \, \varepsilon_q \, \varepsilon_s \,\varepsilon_t \, \langle a\circ_{p,q} b_{s,t}  \circ c \rangle. $$
Applying to the formula for  $\lambda_\omega^{a,b,c}$  the permutations
$$  a\mapsto b \mapsto c \mapsto a, \quad k \mapsto l \mapsto k, \quad p\mapsto s \mapsto q \mapsto t \mapsto p$$ we get 
  $$\lambda_\omega^{b,c,a}= \sum_{ \begin{array}[b]{r}
     { (l,s,t) \in  T_\omega (b, c) }\\
      {(k,q,p)\in T_\omega (b,a), q\neq s}
    \end{array} }\,   \varepsilon(\omega, k, l)  \,\varepsilon_p \, \varepsilon_q \, \varepsilon_s \,\varepsilon_t \, \,  \langle c\circ_{t,s} b_{q,p} \circ a \rangle.$$
Set  $$\Delta_\omega^{a,b,c}=\rho_\omega^{a,b,c}+\lambda_\omega^{b,c,a}\, \,\,\, \,\,{\text {and}} \, \,\,\,\,\,  \Delta^{a,b,c} =\Delta_\omega^{a,b,c} + \Delta_{\overline \omega}^{a,b,c}.$$
   The summands in the expansions of  $\rho_\omega^{a,b,c} $ and $\lambda_\omega^{b,c,a}$  are given  by the same formula as   follows from the equality   \begin{equation}\label{derg} \langle a\circ_{p,q} b_{s,t}  \circ c \rangle= \langle c \circ_{t,s} b_{q,p} \circ a \rangle  \end{equation}  
     for  $q\neq s$. The summation in these two expansions    goes over complementary sets  of triples $ (k,p,q)$ as the inclusion    $(k,q,p)\in T_\omega (b,a)$   holds if and only if $ (k,p,q) \in T(a,b) \setminus T_\omega (a,b) $.  
  Therefore 
  $$ \Delta_\omega^{a,b,c}=  \sum_{ \begin{array}[b]{r}
     { (k,p,q) \in T (a,b) }\\
      {(l,s,t)\in T_\omega (b,c), s\neq q }
    \end{array} }\,   \varepsilon(\omega, k, l)  \,\varepsilon_p \, \varepsilon_q \, \varepsilon_s \,\varepsilon_t \,\, \langle a\circ_{p,q} b_{s,t}  \circ c \rangle. $$
Since  $\varepsilon(\omega, k, l) = \varepsilon(\overline \omega, k, l) $, we deduce    that
$$ \Delta^{a,b,c}  = \sum_{ \begin{array}[b]{r}
     { (k,p,q) \in T (a,b) }\\
      {(l,s,t)\in T  (b,c), s\neq q }
    \end{array} }\,   \varepsilon(\omega, k, l)  \,\varepsilon_p \, \varepsilon_q \, \varepsilon_s \,\varepsilon_t \,\,  \langle a\circ_{p,q} b_{s,t}  \circ c \rangle. $$
    Applying to this formula    the label permutations
$$  a\mapsto  c \mapsto a, \quad k \mapsto l \mapsto k, \quad p\mapsto t \mapsto p, \quad q \mapsto s \mapsto q$$ we get 
$$ \Delta^{c,b,a}  =\sum_{ \begin{array}[b]{r}
     {  (l,t,s)\in T (c,b)}\\
      { (k,q,p) \in T(b,a)  , s\neq q }
    \end{array} }\,   \varepsilon(\omega, k, l)  \,\varepsilon_p \, \varepsilon_q \, \varepsilon_s \,\varepsilon_t \,\,  \langle c \circ_{t,s} b_{q,p}  \circ a \rangle.$$
Comparing with the expression for $ \Delta^{a,b,c} $ above and using  \eqref{derg}, we get 
    \begin{equation}\label{ders} \Delta^{a,b,c}  = \Delta^{c,b,a} .\end{equation}

It is clear that the  total  contribution  of  the 4-terms to 
   $u_\omega (x,y,z)$ is equal to $$\lambda_\omega^{a,b,c}+\rho_\omega^{a,b,c}+ \lambda_\omega^{b,c,a} + \rho_\omega^{b,c,a}+ \lambda_\omega^{c,a,b}  + \rho_\omega^{c,a,b}= \Delta_\omega^{a,b,c}+ \Delta_\omega^{b,c,a}+ \Delta_\omega^{c,a,b}.$$
Therefore
  $$P_4 (a,b,c)= \Delta^{a,b,c} +  \Delta^{b,c,a}+  \Delta^{c,a,b} .$$ 
  Formula \eqref{ders} implies that  $P_4 (a,b,c)= P_4(c,b,a)$.
 Therefore all 4-terms cancel out in 
 $   [x,y,z]-[z,y,x]$ and  
\begin{equation}\label{simplif}  [x,y,z]-[z,y,x]=P_3 (a,b,c)-P_3 (c,b,a) .  \end{equation} 
  It remains   to check that
 \begin{equation}\label{00-}  P_3 (a,b,c)-P_3(c,b,a)=\mu^3(x,y,z)-\mu^3(z, y,x). \end{equation}
 
    We will use the  function~$\eta$   on the set $\{\pm 1\}=\{-1, 1\}$ defined  by  $\eta (1)=1 $  and  $\eta (-1)=0 $. For any triple 
$  \varepsilon, \varepsilon', \varepsilon''\in \{\pm  1\}$   set  
\begin{equation}\label{dd} \vert \varepsilon, \varepsilon', \varepsilon'' \vert =  \varepsilon  \varepsilon' \eta (\varepsilon'') + 
 \varepsilon  \varepsilon'' \eta (\varepsilon') + \varepsilon'  \varepsilon''  \eta (\varepsilon) -\varepsilon  \varepsilon'  \varepsilon''\in {\mathbb Z}.\end{equation} Clearly,  the integer $\vert \varepsilon, \varepsilon', \varepsilon'' \vert$   is    invariant under  permutations of   
  $  \varepsilon, \varepsilon', \varepsilon''$. 
   
 Observe that each 3-term   in $P_3(a,b,c)$  is associated with a certain  $k=l \in \pi_0$ and   a triple $p \in a\cap \alpha_k, q\in b\cap \alpha_k, t\in c\cap \alpha_k$.
 Set
 \begin{equation}\label{zz} \begin{vmatrix}
a&b&c\\
p&q&t\\
\end{vmatrix}= \vert \varepsilon_p, \varepsilon_q, \varepsilon_t \vert \,  \big ( \langle a_p  b_q c_t  \rangle+
 \langle   c_t  b_q a_p \rangle \big ) -\varepsilon_p  \varepsilon_q  \varepsilon_t  \langle   c_t  b_q a_p \rangle \in M  \end{equation} where   $ a_p b_q    c_t  $ is the product of the loops $a_p, b_q, c_t$   formed by  connecting  their base points $p,q,t$  by  arbitrary paths in $\alpha_k$. The loop  $   c_t b_q a_p$ is defined similarly.

 
 
  Let $P_{3,k} (a,b,c)$ be the sum of the 3-terms in $[x,y,z]$ associated with    $k\in \pi_0$.  
We prove below  that  
\begin{equation}\label{00}  P_{3,k} (a,b,c)   =\sum_{p,q,t}\,\,   \begin{vmatrix}
a&b&c\\
p&q&t\\
\end{vmatrix}    . \end{equation}
Here and   below   $  p,q,t$ run respectively over  the sets  $ a\cap \alpha_k, b\cap \alpha_k, c\cap \alpha_k$.   Observe   that~\eqref{00}  implies \eqref{00-}. Indeed, 
permuting $a \leftrightarrow c$, $p\leftrightarrow t$ in~\eqref{00} we get 
$$  P_{3,k} (c,b,a)  =\sum_{p,q,t}\,\,   \begin{vmatrix}
c&b&a\\
t&q&p\\
\end{vmatrix}   =\sum_{p,q,t}\,\,  \begin{vmatrix}
a&b&c\\
p&q&t\\
\end{vmatrix}+\varepsilon_p  \varepsilon_q  \varepsilon_t  \langle   c_t  b_q a_p \rangle -\varepsilon_p  \varepsilon_q  \varepsilon_t  \langle    a_p b_q c_t  \rangle   $$
$$  = P_{3,k} (a,b,c) +\mu^3_{k} ( z,y,x ) -\mu^3_{k} (x,y,z).$$
 Therefore
 $$ P_{3,k} (a,b,c)- P_{3,k} (c,b,a) =\mu^3_{k} (x,y,z)- \mu^3_{k} ( z,y,x ).$$
Summing up  over 
 all $k\in \pi_0 $ and using the obvious equality
   $$P_3 (a,b,c)=\sum_{k\in \pi_0} P_{3,k} (a,b,c)$$ we  get \eqref{00-}.
 
We now prove~\eqref{00}.
Observe first that the formulas above    simplify for the left  3-terms:   $\varepsilon(\omega, k,l)=\varepsilon(\omega, k) \, \varepsilon(\omega,   l)=+1$ as $k=l$ and $\varepsilon_s \varepsilon_p=+1$ as $s=p$.      Thus, the  contribution to $(x\bullet y) \bullet z$  of  the left 3-terms   with the given~$k$
is   equal to  $${\mathcal L}_{\omega,k}^{a,b,c}=  \sum_{  
     { q<_\omega p, \, t<_\omega p}
  } \,  \varepsilon_q \, \varepsilon_t    \, \langle b\circ_{q,p} a_{p,t}  \circ c \rangle. $$ The sum here runs over   all  $ p\in a\cap \alpha_k$, $ q\in b\cap \alpha_k$, $ t\in c\cap \alpha_k$ satisfying the indicated inequalities which reformulate  the conditions $(k,p,q)\in T_\omega(a,b)$ and $(l,s,t)\in T_\omega(a,c)$. 
     The description of the loop $  b\circ_{q,p} a_{p,t}  \circ c  $ above shows that it is freely homotopic to  $  a_p b_q c_t   $ if $\varepsilon_p =1$ and 
  to $c_t b_q a_p$ if $\varepsilon_p =-1$. Thus, 
    $$\langle b\circ_{q,p} a_{p,t}  \circ c \rangle=\eta (\varepsilon_p) \langle a_p  b_q   c_t  \rangle+ ( \eta (\varepsilon_p ) -\varepsilon_p) \langle  c_t  b_q a_p  \rangle   
    $$ and therefore 
     \begin{equation}\label{1a} 
    {\mathcal L}_{\omega,k}^{a,b,c}=  \sum_{  
     { q<_\omega p, \, t<_\omega p}
  } \,  \varepsilon_q \, \varepsilon_t    \, \big (\eta (\varepsilon_p) \langle a_p  b_q   c_t  \rangle+ (\eta (\varepsilon_p ) -\varepsilon_p) \langle  c_t  b_q a_p  \rangle   \big ). \end{equation}
Cyclically permuting     $a,b,c$ and    $p,q,t$, we get 
  \begin{equation}\label{3a} 
    {\mathcal L}_{\omega,k}^{b,c,a}=  \sum_{  
     { t<_\omega q, \, p<_\omega q}
  } \,  \varepsilon_t \, \varepsilon_p    \, \big (
    \eta (\varepsilon_q)  \langle a_p  b_q c_t \rangle + (\eta (\varepsilon_q)-\varepsilon_q) \langle c_t b_q a_p  \rangle \big ) ,\end{equation}
     \begin{equation}\label{5a} 
    {\mathcal L}_{\omega,k}^{c,a,b}=  \sum_{  
     {p<_\omega t, \,q<_\omega t}
  } \,  \varepsilon_p\, \varepsilon_q    \, \big ( 
    \eta (\varepsilon_t)  \langle a_p  b_q c_t  \rangle +(\eta (\varepsilon_t)-\varepsilon_t) \langle c_t b_q a_p  \rangle \big ) . \end{equation}
Similarly, the  contribution to $(x\bullet y) \bullet z$ of  the right 3-terms  with the given~$k$
is   equal to
  $${\mathcal P}_{\omega,k}^{a,b,c}=    \sum_{   t<_\omega  q <_\omega p}
    \,  \varepsilon_p   \, \varepsilon_t   \,  \langle a\circ_{p,q} b_{q,t}  \circ c \rangle   $$
    where  the conditions on $p,q=s,t$  reformulate  the inclusions $(k,p,q)\in T_\omega(a,b)$ and $(l,s,t)\in T_\omega(b,c)$.  Since
    $$ \langle a\circ_{p,q} b_{q,t}  \circ c \rangle= \eta (\varepsilon_q)  \langle  c_t b_q  a_p \rangle + 
   (\eta (\varepsilon_q)-\varepsilon_q) \langle a_p  b_q c_t \rangle  )$$ we have  \begin{equation}\label{2a} 
    {\mathcal P}_{\omega,k}^{a,b,c}=    \sum_{   t<_\omega  q <_\omega p}
    \,  \varepsilon_p   \, \varepsilon_t   \, \big ( \eta (\varepsilon_q)  \langle  c_t b_q  a_p \rangle + 
   (\eta (\varepsilon_q)-\varepsilon_q) \langle a_p  b_q c_t \rangle  \big ). \end{equation}
    Cyclically permuting     $a,b,c$ and    $p,q,t$, we get 
 \begin{equation}\label{4a} 
    {\mathcal P}_{\omega,k}^{b,c,a}=    \sum_{  p<_\omega  t <_\omega q}
    \,  \varepsilon_q   \, \varepsilon_p   \, \big ( \eta (\varepsilon_t) \langle c_t b_q a_p  \rangle + 
   (\eta (\varepsilon_t)-\varepsilon_t)  \langle  a_p  b_q c_t  \rangle \big ),  \end{equation}
     \begin{equation}\label{6a} 
    {\mathcal P}_{\omega,k}^{c,a,b}=    \sum_{  q<_\omega p<_\omega t}
    \,  \varepsilon_t   \, \varepsilon_q   \, \big ( \eta (\varepsilon_p) \langle c_t b_q a_p  \rangle + 
    (\eta (\varepsilon_p)-\varepsilon_p)   \langle a_p  b_q c_t  \rangle \big ). \end{equation}
    The  contribution of the 3-terms  (with given~$k$) to $u_\omega (x,y,z)$ is   the sum of the expressions  \eqref{1a}--\eqref{6a}. Then     $P_{3,k} (a,b,c)$  is the sum of these  six expressions  and  of similar expressions  obtained  by replacing~$\omega$ with~$\overline{\omega}$.  Under this replacement, the only change  on the right-hand sides of  \eqref{1a}--\eqref{6a} concerns  the summation domain. For example,  the summation domain in \eqref{1a}  changes from 
    the set of triples $p,q,t$ such that  ${ q<_\omega p, \, t<_\omega p}$ to the set of triples $p,q,t$ such that ${ q<_{\overline{\omega}} p, \, t<_{\overline{\omega}} p}$. The latter conditions may be rewritten as  ${ p<_{ {\omega}} q, \, p<_{ {\omega}} t}$.  Thus,
     \begin{equation}\label{1az} 
    {\mathcal L}_{\overline{\omega},k}^{a,b,c}=  \sum_{  
     { p<_\omega q, \, p<_\omega t}
  } \,  \varepsilon_q \, \varepsilon_t    \, \big (\eta (\varepsilon_p) \langle a_p  b_q   c_t  \rangle+ (\eta (\varepsilon_p ) -\varepsilon_p) \langle  c_t  b_q a_p  \rangle   \big ) ,\end{equation}
  \begin{equation}\label{3az} 
    {\mathcal L}_{\overline{\omega},k}^{b,c,a}=  \sum_{  
     { q<_\omega t, \, q<_\omega p}
  } \,  \varepsilon_t \, \varepsilon_p    \, \big (
    \eta (\varepsilon_q)  \langle a_p  b_q c_t \rangle + (\eta (\varepsilon_q)-\varepsilon_q) \langle c_t b_q a_p  \rangle \big ) ,\end{equation}
     \begin{equation}\label{5az} 
    {\mathcal L}_{\overline{\omega},k}^{c,a,b}=  \sum_{  
     {t<_\omega p, \,t<_\omega q}
  } \,  \varepsilon_p\, \varepsilon_q    \, \big ( 
    \eta (\varepsilon_t)  \langle a_p  b_q c_t  \rangle +(\eta (\varepsilon_t)-\varepsilon_t) \langle c_t b_q a_p  \rangle \big ) ,\end{equation}
 \begin{equation}\label{2az} 
    {\mathcal P}_{\overline{\omega},k}^{a,b,c}=    \sum_{    p<_\omega q <_\omega  t}
    \,  \varepsilon_p   \, \varepsilon_t   \, \big ( \eta (\varepsilon_q)  \langle  c_t b_q  a_p \rangle + 
   (\eta (\varepsilon_q)-\varepsilon_q) \langle a_p  b_q c_t \rangle  \big ), \end{equation}
 \begin{equation}\label{4az} 
    {\mathcal P}_{\overline{\omega},k}^{b,c,a}=    \sum_{  q<_\omega  t <_\omega p}
    \,  \varepsilon_q   \, \varepsilon_p   \, \big ( \eta (\varepsilon_t) \langle c_t b_q a_p  \rangle + 
   (\eta (\varepsilon_t)-\varepsilon_t)  \langle  a_p  b_q c_t  \rangle \big ),  \end{equation}
     \begin{equation}\label{6az} 
    {\mathcal P}_{\overline{\omega},k}^{c,a,b}=    \sum_{  t<_\omega p<_\omega q}
    \,  \varepsilon_t   \, \varepsilon_q   \, \big ( \eta (\varepsilon_p) \langle c_t b_q a_p  \rangle + 
    (\eta (\varepsilon_p)-\varepsilon_p)   \langle a_p  b_q c_t  \rangle \big ). \end{equation}

  For any triple $p\in a\cap \alpha_k, q \in b\cap \alpha_k , t \in c\cap \alpha_k$,     consider the expressions \eqref{1a} -\eqref{6az}   and pick their    $(p,q,t)$-summands (some of them   may be  equal to zero).    We claim that the sum of these  twelve summands is equal to $\begin{vmatrix}
a&b&c\\
p&q&t\\
\end{vmatrix} $. 
    This will imply~\eqref{00}. 
To prove our claim,    consider possible   positions of the points $p,q,t$ on $\alpha_k$. Replacing, if necessary, $\omega$ by~$\overline \omega$, we can assume that $p<_\omega q$.  This leaves us with three  cases:
(a)  $t<_\omega  p$; (b) $p<_\omega  t <_\omega  q$, and (c) $q <_\omega t$. In Case (a), only the $(p,q,t)$-summands of ${\mathcal L}^{b,c,a}_{\omega, k}$, ${\mathcal L}^{c,a,b}_{\overline \omega, k}$, 
${\mathcal P}^{c,a,b}_{\overline \omega, k}$ may be non-zero  and their sum is   
$$ \varepsilon_t \, \varepsilon_p   
    \eta (\varepsilon_q)  \langle a_p  b_q c_t \rangle + \varepsilon_t \, \varepsilon_p    (\eta (\varepsilon_q)-\varepsilon_q) \langle c_t b_q a_p  \rangle $$
    $$+ \varepsilon_p\, \varepsilon_q    
    \eta (\varepsilon_t)  \langle a_p  b_q c_t  \rangle + \varepsilon_p\, \varepsilon_q  (\eta (\varepsilon_t) -\varepsilon_t) \langle c_t b_q a_p  \rangle$$
    $$+\varepsilon_t   \, \varepsilon_q    \eta (\varepsilon_p) \langle c_t b_q a_p  \rangle + 
  \varepsilon_t   \, \varepsilon_q     (\eta (\varepsilon_p)-\varepsilon_p)   \langle a_p  b_q c_t  \rangle  
  =\begin{vmatrix}
a&b&c\\
p&q&t\\
\end{vmatrix} .$$  
 In Case (b), only the $(p,q,t)$-summands of ${\mathcal L}^{b,c,a}_{\omega, k}$,  ${\mathcal P}^{b,c,a}_{\omega, k}$,  ${\mathcal L}^{a,b,c}_{\overline \omega, k}$  may be non-zero and their sum is
easily computed to be equal to $ \begin{vmatrix}
a&b&c\\
p&q&t\\
\end{vmatrix}$.  
  In Case (c), only the $(p,q,t)$-summands of ${\mathcal L}^{c,a,b}_{\omega, k}$,  ${\mathcal L}^{a,b,c}_{\overline \omega, k}$, ${\mathcal P}^{a,b,c}_{\overline \omega, k}$ may be non-zero and again their sum is  equal to $ \begin{vmatrix}
a&b&c\\
p&q&t\\
\end{vmatrix}$.  
  This proves the claim above  and completes the proof of the theorem.

 \section{Proof of Theorem~\ref{MAIN2} }\label{section3ghgh22}

\subsection{Conventions}\label{MAIN2P}  Set  $\nu=\nu_X: M\to M \otimes M$ and fix the gate orientation~$\omega$ of~$X$ such that $\varepsilon(\omega, k)=+1$ for all $k\in \pi_0$. 
 For points $p,q$ of a  gate, we write $p< q$ if $p<_\omega q$, i.e., if the $\omega$-positive direction on this gate leads from~$p$ to~$q$. 
Recall that a loop   in~$X$   is   \emph{simple} if   it is transversal to the gates  and has no self-intersections in~$\Sigma' \subset X$. By   Lemma~\ref{Loopsiemnn},  any loop  in~$X$  is   freely    homotopic to a simple   loop. 
  By Lemma~\ref{svvvvtreebucte},  for a  simple loop~$a$ in~$X$, we have 
\begin{equation}\label{mop-} 
\nu (\langle a \rangle)
=     \sum_{(k,p_1,p_2)\in T_\omega (a)}   \varepsilon_{p_1}  \, \varepsilon_{p_2}     \big ( \langle  a_{p_2,p_1} \rangle_0 \otimes \langle  a_{p_1,p_2} \rangle_0 - \langle  a_{p_1,p_2}\rangle_0 \otimes \langle  a_{p_2,p_1} \rangle_0  \big )  \end{equation} 
where    $\varepsilon_p=\varepsilon_p(a)$ for any point~$p$ of a gate traversed by~$a$. 
We say that the summand of \eqref{mop-}  determined by the chord $(k,p_1,p_2)\in T_\omega(a)  $ is   obtained by 
\emph{splitting}~$a$ at this chord.

\subsection{Proof of the theorem} 
  It is enough to prove that   \begin{equation}\label{coJ} (I+Q+Q^2)  \circ  \nu^2 (\langle a \rangle)= E  \circ  \gamma^3( \langle a \rangle) \end{equation} 
  for any simple loop~$a$ in~$X$. 
We  compute
 $
\nu (\langle a \rangle) $ via~\eqref{mop-}.
Note that the  loops $a_{p_1,p_2},  a_{p_2,p_1}$  in this formula   are not transversal to $\alpha_k$ as they contain   subpaths  connecting  $ p_1, p_2 \in a \cap \alpha_k$ in $\alpha_k$.
We deform both loops   pushing these subpaths     into $ X\setminus   \Sigma'$ while sliding their endpoints $ p_1, p_2$    into $ X\setminus   \Sigma'$ along $a(S^1)$.  This gives   simple loops   
$a'_{p_1,p_2},  a'_{p_2,p_1}$ in~$X$    whose intersections with the gates are among the intersections of~$a$ with the gates.  Therefore all chords of these two loops   are among  the chords of~$a$.   Since
$ \langle  a'_{p_1,p_2} \rangle_0 = \langle  a_{p_1,p_2} \rangle_0 $  and $ \langle  a'_{p_2,p_1} \rangle_0 = \langle  a_{p_2,p_1} \rangle_0$, we have  \begin{equation}\label{dd1} 
\nu (\langle a \rangle)  =\sum_{(k,p_1,p_2)\in T_\omega (a)}   \varepsilon_{p_1}   \varepsilon_{p_2}   \big ( \langle  a'_{p_2,p_1} \rangle_0 \otimes \langle  a'_{p_1,p_2} \rangle_0 - \langle  a'_{p_1,p_2}\rangle_0 \otimes \langle  a'_{p_2,p_1} \rangle_0  \big ) .   \end{equation}
Note    that for a chord   $(k,p_1,p_2)\in T_\omega (a)$,  the point $p_1 \in a\cap \alpha_k$ is traversed by the loop 
$a'_{p_1,p_2}$ if $\varepsilon_{p_1} =+1$ and  by the loop 
$a'_{p_2,p_1}$ if $\varepsilon_{p_1} =-1$. The point $p_2\in a\cap  \alpha_k$ is traversed by 
$a'_{p_2,p_1}$ if $\varepsilon_{p_2} =+1$ and  by  
$a'_{p_1,p_2}$ if $\varepsilon_{p_2} =-1$.


 It is clear that  $\nu^2 (\langle a \rangle)$ is a sum of expressions obtained by   splitting~$a$ along a   chord and then  splitting  one of the  resulting two    loops (deformed as in the previous paragraph)   along a    chord. The summation runs over  ordered pairs of  chords of~$a$ arising in this way. We call   such pairs   \emph{splitting pairs}.  We will say that two  chords   of~$a$ are \emph{unlinked} if (i) they have distinct endpoints  and  (ii)  going along the loop~$a$     we   meet consecutively both  endpoints of    one chord and then both endpoints of the other  chord.    It is clear  that the chords in a splitting pair are either unlinked or     have   at least one   common endpoint. 


We claim that the sum of the terms of $\nu^2 (\langle a \rangle)$ derived from  unlinked pairs of chords is annihilated by  $I+Q+Q^2$.
Indeed, pick any  unlinked pair  of   chords $(k,p_1,p_2), (l,u_1, u_2)  $ of~$a$  (possibly, $k=l$). Then  the pair   $ (l,u_1, u_2) , (k,p_1,p_2) $ also is unlinked. We will  show  that the terms  of $(I+Q+Q^2) \nu^2 (\langle a \rangle)$ derived from these two pairs   are opposite to  each other. This   will imply our claim.    Consider  first the case where the loop~$a$ traverses the points $p_1,p_2, u_1, u_2  
 $    in the  cyclic order  $p_2, p_1, u_2,  u_1$. Let $b ,  c, d, e$
  be respectively the paths in~$X$ going
  along~$a$ from $p_2$ to $p_1$, from $p_1$ to $u_2$,  from $u_2$ to $u_1$, and from $u_1$ 
     to $p_2$.   Let $f$ be the path going  from $p_1$ to $p_2$ along~$\alpha_k$ and let $g$ be the path going from $u_1$ 
     to $u_2$ along~$\alpha_l$.   
The  term of $\nu (\langle a \rangle)$ obtained    by   splitting~$a$ at   $(k,p_1,p_2)$  is  equal to
\begin{equation}\label{sle1}  \varepsilon_{p_1}   \varepsilon_{p_2}     
\big (   \langle b f \rangle_0  \otimes  \langle c d e f^{-1}\rangle_0    -  \langle c d e f^{-1} \rangle_0 \otimes   \langle b f\rangle_0 \big ).
\end{equation}
Consider the loop $v=e f^{-1} c g^{-1} $.
Splitting the left tensor factors in  \eqref{sle1}  at $(l,u_1, u_2)  $ we get
\begin{equation}\label{sle2}  \varepsilon_{p_1}  \varepsilon_{p_2}  \varepsilon_{u_1}    \varepsilon_{u_2}   
\big (  0 +  \langle  v \rangle_0 \otimes \langle d g \rangle_0  \otimes   \langle b f\rangle_0  -  \langle d g\rangle_0 \otimes \langle v \rangle_0  \otimes   \langle bf \rangle_0 \big )  
\end{equation} where the zero term    reflects the fact that $ (l,u_1, u_2)  $ is not a chord of the loop $bf$. 
Similarly,  the   summand of $\nu^2 (\langle a \rangle)$ arising   by splitting~$a$ first  at   $(l,u_1, u_2)$ and then  at $(k,p_1,p_2)  $ is  equal to 
\begin{equation}\label{sle3}  \varepsilon_{p_1}   \varepsilon_{p_2}    \varepsilon_{u_1}    \varepsilon_{u_2}   
\big (  \langle v \rangle_0 \otimes \langle b f \rangle_0  \otimes \langle d g \rangle_0 - \langle b f\rangle_0 \otimes \langle v \rangle_0  \otimes \langle dg \rangle_0    \ \big ) .\end{equation}    
Applying $I+Q+Q^2$ to  \eqref{sle2} and  \eqref{sle3},  we obtain opposite values. 
The cases  where  the  points    $p_1,p_2, u_1, u_2  
 $ are traversed by~$a$ in other possible cyclic orders     $(p_2,p_1,u_1,u_2)$,  $(p_1,p_2,u_1, u_2)$, and  $(p_1,p_2,u_2, u_1)$ are treated similarly.    


We check next that a splitting pair  of chords  consisting of the same   chord $(k,p_1,p_2)$ of~$a$ taken twice contributes zero to $ \nu^2 (\langle a \rangle)$. If $\varepsilon_{p_1}=
\varepsilon_{p_2}$ then one of the points $p_1, p_2$ is traversed by    $a'_{p_1,p_2}$ and the other one by  $a'_{p_2,p_1}$ so that   $(k,p_1,p_2)$  is not a chord of   these   loops. Then  the  pair   $(k,p_1,p_2), (k,p_1,p_2)$ is not a splitting pair of~$a$ and does not contribute to $\nu^2 (\langle a \rangle)$. If  $\varepsilon_{p_1}=
-\varepsilon_{p_2}$ then both  points $p_1,p_2$ are traversed by the same loop, either  $a'_{p_1,p_2}$ or $a'_{p_2,p_1}$.  Splitting that loop at the chord $(k,p_1,p_2)$, we obtain two loops   one of which lies in a neighborhood of $\alpha_k$ and  is contractible in~$X$. So, the  corresponding term of $\nu^2 (\langle a \rangle)$ is equal to  zero. 

It remains to consider     splitting pairs  $(k,p_1,p_2), (l,u_1, u_2)$ consisting of     chords of~$a$   with   one common endpoint. Then $k=l$ and our   chords    have 3 distinct endpoints. For     any three distinct points of $ a  \cap \alpha_k$ we  compute the contribution to $ \nu^2 (\langle a \rangle)$ of  all    associated splitting pairs. Label our  points  $p_1,p_2, p_3$  so that $p_1<_\omega p_2<_\omega p_3$. 
The splitting pairs   in question    are ordered pairs of distinct  chords from the list $(k,p_1,p_2), (k,p_2,p_3), (k,p_1,p_3)$. Recall the function~$\eta$ on the set $\{+1, -1\}$ defined by $\eta(+1)=1, \eta (-1)=0$.
 In the following computations,   $$x=
\langle a'_{p_1,p_2} \rangle_0 \in M , \,\, y = \langle a'_{p_2,p_3} \rangle_0 \in M , \,\, z=\langle a'_{p_3, p_1} \rangle_0 \in M. $$   Consider  first the case where the loop~$a$ traverses the points $p_1,p_2, p_3  
 $  in the  cyclic order  $p_1,p_2, p_3$.  In this  notation, the contribution of  the chord $(k,p_1,p_2) $   to $ \nu (\langle a \rangle)$ is equal to $$  \varepsilon_{p_1}    \varepsilon_{p_2}   \big ( \langle a'_{p_2,p_1} \rangle_0 \otimes x -x\otimes \langle a'_{p_2,p_1} \rangle_0 \big ) .$$   Clearly,  the loop $a'_{p_1,p_2}$ representing~$x$ does not traverse the point $p_3$ and  does not have   $    (k,p_2,p_3)$ as a chord. If  $\varepsilon_{p_2} =-1$, then the loop $a'_{p_2,p_1}$ does not traverse $p_2$ and does not have   $     (k,p_2,p_3)$ as a chord. 
Therefore the contribution of   the splitting pair   $(k,p_1,p_2), (k,p_2,p_3)$   to $ \nu^2 (\langle a \rangle)$  
is  equal to
$$\eta{(\varepsilon_{p_2}) } \varepsilon_{p_1}     \varepsilon_{p_2}   \varepsilon_{p_2}    \varepsilon_{p_3}   (z\otimes y -y\otimes z) \otimes x
=\eta{(\varepsilon_{p_2}) }  \varepsilon_{p_1}     \varepsilon_{p_2}    \varepsilon_{p_3}   (z\otimes y  -y\otimes z ) \otimes x. $$
Similarly,   the splitting pair   $(k,p_1,p_2), (k,p_1, p_3)$  contributes to $ \nu^2 (\langle a \rangle)$
the term 
$$ (1-\eta{(\varepsilon_{p_1}) }  )  \, \varepsilon_{p_1}  \varepsilon_{p_1}     \varepsilon_{p_2}      \varepsilon_{p_3}   (z\otimes y    -y\otimes z )\otimes x$$
$$= (\eta{(\varepsilon_{p_1}) }  -1 )  \, \varepsilon_{p_1}    \varepsilon_{p_2}      \varepsilon_{p_3}   (z\otimes y    -y\otimes z )\otimes x   .$$
So,   the  splitting pairs starting with    $(k,p_1,p_2)$ contribute to $ \nu^2 (\langle a \rangle)$ the expression 
$$(\eta{(\varepsilon_{p_1}) }  + \eta{(\varepsilon_{p_2}) }  -1 ) \,  \varepsilon_{p_1}     \varepsilon_{p_2}     \varepsilon_{p_3}   (z\otimes y  \otimes x -y\otimes z \otimes x). $$
Similarly,   the contributions  to $ \nu^2 (\langle a \rangle)$ of the splitting pairs with first term  $(k, p_2,p_3)$ and the second term $(k,p_1,p_2)$ or $(k,p_1, p_3)$ sum up  to 
$$(\eta{(\varepsilon_{p_2}) }  + \eta{(\varepsilon_{p_3}) }  -1 ) \, \, \varepsilon_{p_1}     \varepsilon_{p_2}     \varepsilon_{p_3}   (z\otimes x  \otimes y - x\otimes z \otimes y) $$
and the contributions  of the splitting pairs with first chord $(k,p_1,p_3)$ sum up  to 
$$(\eta{(\varepsilon_{p_1}) }  + 1- \eta{(\varepsilon_{p_3}) }   ) \, \, \varepsilon_{p_1}     \varepsilon_{p_2}      \varepsilon_{p_3}    (x\otimes y  \otimes z -y\otimes x \otimes z) .$$
Applying $I+Q+Q^2$ to the sum of these three expressions we get  
\begin{equation}\label{QQQ}  \varepsilon_{p_1}     \varepsilon_{p_2}    \varepsilon_{p_3} \, (I+Q+Q^2)  (  x\otimes y  \otimes z- 
z\otimes y  \otimes x ). \end{equation}
By  definition,  $\gamma^3 ( \langle a \rangle) \in M^{\otimes 3}$  is a sum whose terms are numerated by triples of distinct points of~$S^1$ carried by  $a:S^1\to X$ to  the same gate. The term associated with the triple $ p_1,p_2, p_3 $   above  is  $$ \varepsilon_{p_1}   \varepsilon_{p_2}     \varepsilon_{p_3}     (  x\otimes y  \otimes z+y \otimes z  \otimes x +z \otimes x  \otimes y). $$
Applying  the operator~$E$ (defined by  \eqref{HHH}) to  this expression we again get  \eqref{QQQ}. Thus, the  triple $p_1,p_2, p_3$   contributes  the same    to both sides of \eqref{coJ}. Similar computations   show  that this is also true   when the points $p_1,p_2, p_3 $ are traversed by~$a$  in  the  cyclic order  $p_3, p_2,p_1$. 
Summing up over all $k \in \pi_0$ and all triples  $p_1,p_2, p_3\in a \cap \alpha_k $  with    $ p_1 <  p_2 <  p_3$, we get~\eqref{coJ}. 

\section{Proof of Theorem~\ref{MAIN3}}\label{coorcompadfT0}

We start by computing  $\partial \nu (x \otimes y)$ 
  for arbitrary  $x,y \in  L(X) \subset M $. Recall  that 
  $$\partial \nu (x \otimes y)= \nu ( [x,y])  - ad_x( \nu(y)) +   ad_y( \nu(x)) .$$
   Since  all terms on the right-hand side involve    two of our  (co)bracket operations,  they expand  as    sums  of terms determined  by ordered pairs of chords. To make this expansion    explicit, we use notation  of Section~\ref{MAIN2P} and   represent $x,y$ by simple loops $a,b$    which do not meet   in~$\Sigma'$.  
For any chord $(k,p,q) \in T(a,b)$,  consider the simple loop $a\circ_{p,q}b$ in~$X$ obtained by deformation of the loop  $a_pb_q$ as   in the proof of 
 Theorem~\ref{MAIN} in  Section~\ref{section3ghvvgh}.   Formula~\eqref{monoee} and  the equality $ \langle a\circ_{p,q}b \rangle =  \langle b\circ_{q,p }a \rangle $  imply  that 
$$ [x,y] =x \bullet y -y \bullet x =  \sum_{(k,p,q) \in T_\omega (a,b) }\,    \varepsilon_p  \,\varepsilon_q \,   \langle a\circ_{p,q}b \rangle - \sum_{(k,q,p) \in T_\omega (b,a) }\,    \varepsilon_p  \,\varepsilon_q \,   \langle b\circ_{q,p }a \rangle $$
$$= 
 \sum_{(k,p,q) \in T(a,b) }\,   \delta (p,q)\,  \varepsilon_p  \,\varepsilon_q \,   \langle a\circ_{p,q}b \rangle $$
where  $\varepsilon_p=\varepsilon_p(a)$, $\varepsilon_q=\varepsilon_q(b)$, and
\[  \delta  (p,q) =  \begin{cases}
    +1   & \quad \text{if } q< p,  \\
    -1    & \quad \text{if }  p < q.
  \end{cases} \]
   Therefore 
$$\nu ( [x,y] )= 
 \sum_{(k,p,q) \in T(a,b) }\,   \delta (p,q)\,  \varepsilon_p   \,\varepsilon_q \,  \nu (  \langle a\circ_{p,q}b  \rangle ) .$$
By  \eqref{mop-},   $ \nu (  \langle a\circ_{p,q}b \rangle )$ is a sum of terms   numerated by the   chords of  $a\circ_{p,q}b$. 
Thus,  $\nu ( [x,y])$ is a sum of  terms   numerated by pairs (a chord $(k,p,q)$ of the pair   $(a,b)$, a   chord  of the loop    $a\circ_{p,q}b$).   We call a chord $(k,p,q)$ of the pair  $(a,b)$ \emph{positive} if $p<q$. For all $l\in \pi_0$, we have    \begin{equation}\label{MMT} (a\circ_{p,q}b) \cap \alpha_l = (a \cap \alpha_l) \sqcup (b \cap \alpha_l). \end{equation}    Therefore a  chord of the loop $a\circ_{p,q}b$ is  either  a  chord of~$a$, or a  chord   of~$b$, or a positive chord of the
 pair $(a,b)$, or a positive chord of the pair~$(b,a)$.  Next we use Formula~\eqref{dd1} to compute $\nu(x)$.  Since the crossings of the gates with the loops $ a'_{p_1,p_2},  a'_{p_2,p_1}$ appearing in~\eqref{dd1}    are among the crossings of the gates with~$a$,  the   brackets   $[y,   \langle  a'_{p_1,p_2}\rangle_0] $ and  $  [y,   \langle  a'_{p_2, p_1}\rangle_0 ]$  are  sums of expressions  numerated by    chords   of  the pair   $(b,a)$.
 So,  $ad_y( \nu(x))$ is a sum  of  terms   numerated by pairs (a  chord   of~$a$, a chord  of  $(b,a)$). Similarly,  $ad_x( \nu(y))$ is a sum  of terms  numerated by pairs (a  chord   of~$b$, a chord  of  $(a,b)$).
We conclude that  there are five types of ordered pairs of   chords  which may contribute to $\partial \nu (x \otimes y)$:

(i)   (a chord  of  $(a,b)$, a  chord  of~$a$); 

(ii)  (a chord  of  $(a,b)$, a  chord  of~$b$); 

(iii)  (a chord  of  $(a,b)$, a positive chord  of  $(a,b)$ or    of  $(b,a)$);

(iv)   (a  chord   of~$a$, a chord  of  $(b,a)$);

(v) (a  chord   of~$b$, a chord  of  $(a,b)$). 

Here pairs of types (i)--(iii) contribute to  $\nu ( [x,y])$ and  pairs of types (iv), (v) contribute respectively  to $ad_y( \nu(x))$, $ad_x( \nu(y)) $.  In the sequel, the contribution of an ordered pair of chords~$s$ to  $\partial \nu (x \otimes y)$ is denoted by$\vert s\vert$.


Pick  now a   set $S\subset \cup_k \alpha_k $ such that there is an ordered
  pair of  chords in the list (i)--(v)   whose set of endpoints  
  is equal to~$S$.  We call such a pair   of chords an \emph{$S$-pair}. The existence of an  $S$-pair implies that   $2 \leq \card(S) \leq 4$
  and~$S$ meets both $a(S^1)$  and $ b(S^1)$.   Set $\vert S\vert=\sum_s \vert s\vert$ where~$s$ runs over all $S$-pairs.

Claim:    $\card(S)=4 \Rightarrow \vert S\vert=0$. 
We first consider the case where~$S$ meets both $a(S^1)$  and $b(S^1)$ in two points. Then all  $S$-pairs have type (iii) and~$S$ consists  of  four distinct   points   $$p\in a\cap \alpha_k , \quad  q\in b\cap \alpha_k , \quad p'\in a\cap \alpha_l ,  \quad q'\in b\cap \alpha_l$$  with    $k,l \in \pi_0 $ (possibly, $k=l$).  
We  let $c^{p,q}_{p',q' } $ be the loop   going from $p'$ to $q'$ along $a\circ_{p,q}b$ and then   back to $p'$ along  the gate $\alpha_l$.
Viewed up to homotopy, this loop     goes  along~$a$ from~$p'$ to~$p$,  then along $\alpha_k$ to $q$, then along~$b$ to $q'$,  and then   along  $\alpha_l$   back to~$p'$.  Similarly, we let $c^{p,q}_{q',p' } $ be the loop   going from  $q'$ to $p'$ along $a\circ_{p,q}b$ and then  back to~$q'$  along  $\alpha_l$.   Viewed up to homotopy, this  loop     goes  along~$b$ from $q'$ to~$q$,  then along $\alpha_k$ to~$p$, then along~$a$ to~$p'$,  and then   along  $\alpha_l$   back to~$q'$.

Note that there is just one $S$-pair,~$s$, with the first term $(k,p,q)$. Namely,
 $$  s =  \begin{cases}
     ((k,p,q), (l,p',q'))    & \quad \text{if }  p'< q', \\
     ((k,p,q), (l,q',p'))   & \quad \text{if } q'< p'.
  \end{cases}    $$
 Clearly,   $$ \vert s \vert =\delta (p,q) \,   \delta (p',q') \,  \varepsilon_p  \,  \varepsilon_q \,\varepsilon_{p'}   \, \varepsilon_{q'}  \, \big (\langle  c^{p,q}_{p',q'} \rangle_0 \otimes  \langle c^{p,q}_{q',p'} \rangle_0 - \langle c^{p,q}_{q',p'} \rangle_0 \otimes \langle  c^{p,q}_{p',q'} \rangle_0 \big) $$
where   $ \varepsilon_p = \varepsilon_p(a), \varepsilon_{p'}=\varepsilon_{p'}(a) , \varepsilon_q =\varepsilon_q (b),    \varepsilon_{q'}  =\varepsilon_{q'} (b)$. 
We    rewrite  this   as
$$ \vert s \vert =\delta (p,q) \,   \delta (p',q') \,  \varepsilon_p  \,  \varepsilon_q \,\varepsilon_{p'}   \, \varepsilon_{q'}  \, \overline P(\langle  c^{p,q}_{p',q'} \rangle_0 \otimes  \langle c^{p,q}_{q',p'} \rangle_0). $$
 Similarly,  there is a unique $S$-pair  $t$ with the first term $(l,p', q')$   and 
$$  \vert t \vert = \delta (p,q) \,   \delta (p',q') \,  \varepsilon_p \,  \varepsilon_q\,\varepsilon_{p'}  \, \varepsilon_{q'}   \,  \overline P(\langle  c^{p',q'}_{p,q} \rangle_0 \otimes  \langle c^{p',q'}_{q,p} \rangle_0) . $$
The obvious equalities    $\langle c^{p',q'}_{q,p}  \rangle_0 =\langle  c^{p,q}_{p',q'} \rangle_0 $  and
$\langle c^{p',q'}_{p,q}  \rangle_0 =\langle  c^{p,q}_{q',p'} \rangle_0 $ imply  that 
 $\vert s \vert=-\vert t \vert$. If $k\neq l$, then there are no other $S$-pairs and    $\vert S\vert=\vert s \vert+\vert t \vert=0$. If $k=l$, then there are two more $S$-pairs~$s'$ and~$t'$  
  with the first terms respectively $(k,p,q')$ and   $(k, p', q)$. The argument above (with~$p$ replaced by~$p'$ and vice versa)  shows that $\vert s'\vert=-\vert t' \vert$.  So,  $\vert S\vert=\vert s \vert+\vert t \vert +\vert s'\vert+\vert t' \vert =0$.
  


Suppose  now  that our  4-element set~$S$ meets $a(S^1)$ in three points and meets $b(S^1)$ in one point. The existence of an $S$-pair implies that~$S$ consists  of    points   $$p\in a \cap \alpha_k  , \quad q \in b \cap\alpha_k, \quad p', p''\in  a\cap \alpha_l$$  for some    $k,l \in \pi_0$.
 Assume first that $k \neq l$   and   choose  $p', p''$ so  that  $p'<p''$.
 Then there are only  two $S$-pairs of chords: \begin{equation}\label{nnnx-} s=((k,p,q),  (l,p',p''))\quad \text{and} \quad t=( (l,p',p''), (k,q,p)) \end{equation} of  types respectively (i) and    (iv).   
If  the points $p,p', p''$ are traversed by~$a$ in this cyclic  order, then $$\vert s \vert=    \delta (p,q)\,  \varepsilon_p  \,  \varepsilon_q \,  \varepsilon_{p'}  \,  \varepsilon_{p''} \, 
\overline P \big (  \langle (a_{p'',p'})_p b_q\rangle_0  \otimes   \langle a_{p',p''} \rangle_0 \big ) . $$
If  the points $p,p', p''$ are traversed by~$a$  in the opposite  cyclic  order, then
$$\vert s \vert=   \delta (p,q)\,  \varepsilon_p  \,  \varepsilon_q  \,  \varepsilon_{p'}  \,  \varepsilon_{p''} \, \overline P
\big (  \langle a_{p'',p'}\rangle_0  \otimes   \langle (a_{p',p''})_p  b_q \rangle_0   \big ) . $$
   At the same time, the contribution of the  chord  $ (l,p',p'')$ of~$a$  to $\nu(x)$ is equal to
$$  \varepsilon_{p'} \,  \varepsilon_{p''}    \,   \overline P
\big ( \langle a_{p'',p'} \rangle_0  \otimes   \langle a_{p',p''} \rangle_0    \big ).$$
As a consequence, $\vert t \vert=-\vert s \vert $ where the minus  is  due to the minus in the formula   $\delta(q,p) =-\delta (p,q)$.   Thus,  $\vert S\vert=\vert s \vert+\vert t \vert=0$.  If $k=l$, then we label the three points    of  the set  $S\cap a(S^1)$ by $p,p', p''$  so that  $p<p'<p''$. Here, besides the $S$-pairs $s,t$ as in  \eqref{nnnx-}, we have four more $S$-pairs
$$  s'=((k,p',q),  (l,p,p'')),\quad  \quad t'=( (l,p,p''), (k,q,p')), $$
$$ s''=((k,p'',q),  (l,p,p')), \quad   \quad t''=( (l,p,p'), (k,q,p'')) .  $$
The same computations as above yield $$\vert s \vert+\vert t \vert= \vert s'
 \vert+\vert t' \vert= \vert s'' \vert+\vert t'' \vert=0.$$
Hence, $$\vert S\vert =\vert s \vert+\vert t \vert+ \vert s'
 \vert+\vert t' \vert+ \vert s'' \vert+\vert t'' \vert=0.$$

The case where~$S$ meets $a(S^1)$ in one point  and meets $b(S^1)$ in  three points is treated similarly.  This completes the proof of our    claim     $\card(S)=4 \Rightarrow \vert S\vert=0$.

Consider now the case   $\card(S)=2$. Then~$S$ consists of the endpoints of a  chord  $(k,p,q)$ of $(a,b)$. Suppose first that $p<q$. Then  the only $S$-pair    is    $s=( (k,p,q),  (k,p,q)) $. The   chord $ (k,p,q)$     contributes  to $[x,y] $ the expression 
$$ \delta (p,q)  \, \varepsilon_p \,  \varepsilon_q\,   \langle a \circ_{p,q}b \rangle=-\varepsilon_p \,  \varepsilon_q\,   \langle a \circ_{p,q}b \rangle .$$ The   same chord $(k,p,q)$ is   a chord of  the loop $a \circ_{p,q}b$  and splitting $a \circ_{p,q}b$  at  it  we obtain  two   loops.    If  $\varepsilon_p=-  \varepsilon_q$, then    one of them is contractible and therefore
 $\vert S\vert =\vert s\vert  =0$.
If $\varepsilon_p=  \varepsilon_q $, then   these two   loops   are freely homotopic to $a, b$.    Analyzing  separately the cases  $\varepsilon_p=  \varepsilon_q  =+ 1$ and $\varepsilon_p=  \varepsilon_q =-1$  we obtain that   $$  \vert S\vert =\vert s\vert =  \varepsilon_p\,  \overline P   (\langle a \rangle_0 \otimes \langle b \rangle_0 )  =   
  \varepsilon_p\, \overline P  (x \otimes y) \,\, \,\, {\text {mod}}  (Re \otimes M +M\otimes Re).   
$$ Thus, modulo $Re \otimes M +M\otimes Re$, we have
    \begin{equation} \label{2-element--}   \vert S\vert  =  \begin{cases}
0   & \quad \text{if } \varepsilon_p=-  \varepsilon_q, \\
  \varepsilon_p\, \overline P  (x \otimes y)   & \quad \text{if } \varepsilon_p=  \varepsilon_q.
  \end{cases}    \end{equation} 
    In the case $q<p$ there is  only one $S$-pair       $ ( (k,p,q),  (k,q,p)) $ and similar computations again give  \eqref{2-element--}.

Finally, assume that   $\card(S)=3$, i.e., that~$S$ consists of the endpoints of two distinct  chords sharing one endpoint. Then  $S$  is contained  in a single gate $\alpha_k$   and meets one of the sets $a(S^1), b(S^1) $ in two points and the other  set  in one point. Suppose first that
$S=\{p,p', q\} $ for some $p,p' \in  a \cap \alpha_k   $, $q \in  b\cap \alpha_k $ with $p < p'$. Consider the  loops $u=a_{p,p'}$,
$u'=a_{p',p}$,  $v=b_q$ based in  $p, p', q$ respectively. We  view  the gate $\alpha_k$ as a \lq\lq big'' base point of these loops; this    allows us to form  the  products    $uu', u'v$, etc.  
For example, in this notation $\langle a\circ_{p,q} b \rangle  =\langle uu'v\rangle $ and  $\langle a\circ_{p',q} b \rangle  =\langle u'uv\rangle $. Set
$$U=\langle  u \rangle  \otimes  \langle  u'v \rangle    = \langle  u \rangle \otimes  \langle  vu'\rangle  
\in M \otimes M$$
and 
$$U'=\langle u' \rangle  \otimes  \langle  uv \rangle   = \langle  u' \rangle  \otimes  \langle  vu\rangle 
\in M \otimes M.$$
We claim that  modulo $Re \otimes M +M\otimes Re$ we have  
\begin{equation}\label{mann} \vert S\vert =  \varepsilon_p \, \varepsilon_{p'} \,\varepsilon_q \,  {\overline P} ( U+U' ) . \end{equation} 
To see it,  we list all $S$-pairs of chords. Pairs of type (i):
$$ s= ((k,p,q), (k,p,p')) \, \,\, \, \text{and}\,\, \,\,    s'= ((k,p',q), (k,p,p')).$$
Pairs of types (ii) and (v): none.  Pairs of type (iii):
$$  t =  \begin{cases}
     ((k,p,q), (k, p',q))    & \quad \text{if }  p'< q, \\
     ((k,p,q), (k,q,p'))   & \quad \text{if } q< p',
  \end{cases}    $$ and
  $$  t' =  \begin{cases}
     ((k,p',q), (k, p,q))    & \quad \text{if }  p< q, \\
     ((k,p',q), (k,q,p))   & \quad \text{if } q< p.
  \end{cases}    $$
  Pairs of type (iv):
  $$w=((k,p,p'), (k, q,p))\, \,\, \, \text{and}\,\, \,\,  w'=((k,p,p'), (k, q,p')).$$
  We compute the contributions of   these   six  $S$-pairs to $\partial \nu (x\otimes y)$. All the computations to follow proceed  modulo $Re \otimes M +M\otimes Re$.  As we know,  the contribution of the chord  $(k,p,q)$   to $[x,y]$ is equal to 
\begin{equation}\label{as0e}  \delta (p,q)\,   \varepsilon_p\, \varepsilon_q \,   \langle a\circ_{p,q}b\rangle   .\end{equation} We apply  $\nu$  to \eqref{as0e} and focus  on the contribution of the chord $(k,p,p')$. This computes $\vert s\vert$ as follows:
$$ \vert s\vert = - \delta (p,q)\,   \varepsilon_p\,  \varepsilon_{p'} \, \varepsilon_q \,   \times  \begin{cases}
     {\overline P}   (  U'
    )    & \quad \text{if }  \varepsilon_p=-1, \\
        {\overline P}   (  U
    )     & \quad \text{if } \varepsilon_p=+1.
  \end{cases}    $$    Similarly,    the chord  $(k,p',q)$    contributes 
$ \delta (p',q)\,   \varepsilon_{p'}\, \varepsilon_q \,   \langle a\circ_{p',q}b \rangle $ to $[x,y]$.  Applying~$\nu$  and focusing on the contribution of the chord $(k,p,p')$, we get
$$ \vert s'\vert =   \delta (p',q)\,   \varepsilon_p\,  \varepsilon_{p'} \, \varepsilon_q \,   \times  \begin{cases}
     {\overline P}   (  U
    )    & \quad \text{if }  \varepsilon_{p'}=-1, \\
        {\overline P}   (  U'
    )     & \quad \text{if } \varepsilon_{p'}=+1.
  \end{cases}    $$   
 Next, applying $\nu$ to \eqref{as0e}  and focusing on the contribution of the chord $(k, p',q)  $ if $  p'<q$ and the chord $
  (k,q,p')$ if $ q< p'$,  we get
  $$ \vert t\vert =  \delta (p,q)\,  \delta (p',q)\,   \varepsilon_p\,  \varepsilon_{p'} \,   \varepsilon_q \,   \times  \begin{cases}
     {\overline P}   (  U
    )    & \quad \text{if }  \varepsilon_q=-1, \\
        {\overline P}   (  U'
    )     & \quad \text{if } \varepsilon_q=+1.
  \end{cases}    $$  
  This computation of $\vert t\vert$ does     not use the assumption $p< p'$; therefore exchanging $p ,U$   with~$p', U'$, respectively,  we get 
  $$ \vert t'\vert =  \delta (p,q)\,  \delta (p',q)\,   \varepsilon_p\,  \varepsilon_{p'} \,   \varepsilon_q \,   \times  \begin{cases}
     {\overline P}   (  U'
    )    & \quad \text{if }  \varepsilon_q=-1, \\
        {\overline P}   (  U
    )     & \quad \text{if } \varepsilon_q=+1.
  \end{cases}    $$
By definition,  the contribution of the chord  $(k,p,p')$   to $\nu(x)$ is equal to 
\begin{equation}\label{asmm0e}   \varepsilon_p\, \varepsilon_{p'} \,     (\langle u' \rangle_0  \otimes \langle u \rangle_0-\langle u \rangle_0  \otimes \langle u' \rangle_0 ).\end{equation}
Applying ${ {ad}}_y$ and focusing on the contribution of the chord $(k,q,p)$, we get 
  $$ \vert w\vert = - \delta (p,q)\,      \varepsilon_p\,  \varepsilon_{p'} \,   \varepsilon_q \,   \times  \begin{cases}
     {\overline P}   (  U
    )    & \quad \text{if }  \varepsilon_p=-1, \\
        {\overline P}   (  U'
    )     & \quad \text{if } \varepsilon_p=+1 
  \end{cases}    $$
  where we use the identity $\delta(q,p)=- \delta (p,q)$. 
  Applying ${ {ad}}_y$ to \eqref{asmm0e} and focusing on the contribution of the chord $(k,q,p')$, we similarly get 
  $$ \vert w'\vert =   \delta (p',q)\,      \varepsilon_p\,  \varepsilon_{p'} \,   \varepsilon_q \,   \times  \begin{cases}
     {\overline P}   (  U'
    )    & \quad \text{if }  \varepsilon_{p'}=-1, \\
        {\overline P}   (  U
    )     & \quad \text{if } \varepsilon_{p'}=+1.
  \end{cases}    $$
  From these formulas we deduce that
\begin{equation}\label{el11} \vert s \vert +\vert w \vert = - \delta (p,q)\,      \varepsilon_p\,  \varepsilon_{p'} \,   \varepsilon_q \,
  {\overline P}   (  U +U'
    ),\end{equation}
\begin{equation}\label{el12}\vert s' \vert +\vert w' \vert = \delta (p',q)\,      \varepsilon_p\,  \varepsilon_{p'} \,   \varepsilon_q \,
  {\overline P}   (  U +U'
    ),\end{equation}
\begin{equation}\label{el13}\vert t \vert +\vert t' \vert = \delta (p,q)\, \delta (p',q)\,      \varepsilon_p\,  \varepsilon_{p'} \,   \varepsilon_q \,
  {\overline P}   (  U +U'
    ). \end{equation}
  Note that
    $$- \delta (p,q) +  \delta (p',q) + \delta (p,q)\, \delta (p',q) =1$$
    as either $q<p'$ and then $ \delta (p',q)=1$ or $p<p'<q$ and then $\delta (p,q) =-1$.
    Therefore   adding up  the equalities \eqref{el11}-\eqref{el13}
    we obtain  \eqref{mann}. 
    
    It remains to consider the case where   
$S=\{p,q,q'\} $ for some $p \in  a \cap \alpha_k   $, $q,q' \in  b\cap \alpha_k $ with $q < q'$. Consider the  loops $u=a_p$,  $v=b_{q,q'}$,
$v'=b_{q',q}$ based respectively at $p, q,q'$. As above, we  view  the gate $\alpha_k$ as a \lq\lq big" base point of these loops.  
 Set
$$V=\langle v \rangle \otimes  \langle  v'u \rangle    = \langle  v \rangle  \otimes  \langle  uv'\rangle  
\in M \otimes M$$
and 
$$V'=\langle v' \rangle  \otimes  \langle  vu\rangle    = \langle  v' \rangle  \otimes  \langle  uv\rangle  
\in M \otimes M.$$
Then modulo $Re \otimes M +M\otimes Re$ we have
\begin{equation}\label{mann+} \vert S\vert = -  \varepsilon_p \,\varepsilon_q  \, \varepsilon_{q'} \,  {\overline P} ( V+V' ) . \end{equation} 
This follows from  the observation that the equality $\partial \nu(x \otimes y)=-\partial \nu(y \otimes x)$ holds  term-wise so that the contribution    of~$S$ to $\partial \nu(x \otimes y)$ is obtained by negation from   the contribution of~$S$ to $\partial \nu(y \otimes x)$. By \eqref{mann}, the latter    is  equal to $\varepsilon_p \,\varepsilon_q  \, \varepsilon_{q'} \,  {\overline P} ( V+V' ) $. This  implies \eqref{mann+}.

Let $\sigma_0$ be  the total contribution to $\partial \nu(x \otimes y)$ of the
  2-element  sets~$S$ consisting of the endpoints of a chord of $(a,b)$.  Summing up the equalities \eqref{2-element--} over all such~$S$ we obtain  that
$$ \sigma_0=  \sum_{k \in \pi_0}\,  (n_k^+-n_k^-)\,  \overline P    (   x \otimes y   ) $$ 
where   $n_k^\pm$ is the number of pairs $(p\in a\cap \alpha_k, q\in b\cap \alpha_k)$ with  $ \varepsilon_p(a)=\varepsilon_q(b)=\pm$. 
In the notation of Section~\ref{Gate bi-endos} applied  to $C=\alpha_k$  we have 
$$n_k^+-n_k^-=\frac{\vert a \vert_{\alpha_k} (b \cdot \alpha_k) +\vert b \vert_{\alpha_k} (a \cdot \alpha_k)}{2}.$$ Therefore  
$$2 \sigma_0=  \sum_{k \in \pi_0}\, (\vert a \vert_{\alpha_k} (b \cdot \alpha_k) +\vert b \vert_{\alpha_k} (a \cdot \alpha_k))\,  \overline P    (   x \otimes y   ) =\sigma'_0 - \sigma''_0$$
where 
$$\sigma'_0= \sum_{k \in \pi_0}\, \vert a \vert_{\alpha_k} (b \cdot \alpha_k)  \,  \overline P    (   x \otimes y   )\quad {\text {and}} \quad 
\sigma''_0= \sum_{k \in \pi_0}\, \vert b \vert_{\alpha_k} (a \cdot \alpha_k)  \,  \overline P    (   y\otimes x   ) . $$
Let $\sigma_1$ be  the total contribution to $\partial \nu(x \otimes y)$ of all sets~$S$  consisting of two  points   of $a  \cap \alpha_k$ and a point of $b  \cap \alpha_k$ for some $k \in \pi_0$.
Summing up    the equalities~\eqref{mann} over all such $S,k$, we obtain that (modulo $Re \otimes M +M\otimes Re$) 
$$\sigma_1= \sum_{k \in \pi_0} \,   \sum_{p_1, p_2 \in a  \cap \alpha_k, p_1\neq p_2} \,  \sum_{q\in  b  \cap \alpha_k }  \, \varepsilon_{p_1} (a) \varepsilon_{p_2} (a)  \varepsilon_{q} (b)  \, \overline P \big ( \langle a_{p_1, p_2}\rangle \otimes \langle a_{p_2, p_1} \circ b_q\rangle \big ).$$ Similarly,   Formula~\eqref{mann+} implies that the total contribution to $\partial \nu(x \otimes y)$ of all sets~$S$  consisting of a  point   of $a  \cap \alpha_k$ and  two points of $b  \cap \alpha_k$ for some~$k$ is equal   (modulo $Re \otimes M +M\otimes Re$) to $-\sigma_2$ where
$$\sigma_2=   \sum_{k \in \pi_0} \,   \sum_{p  \in a  \cap \alpha_k } \,  \sum_{q_1, q_2 \in  b  \cap \alpha_k, q_1\neq q_2 }  \, \varepsilon_{p} (a) \varepsilon_{q_1} (b)  \varepsilon_{q_2} (b)  \,  \overline P \big ( \langle b_{q_1,q_2}\rangle \otimes \langle b_{q_2, q_1} \circ a_p\rangle \big ). $$
Since $  \vert S\vert=0$ for all other~$S$, we have 
$$\partial \nu(x \otimes y) = \sigma_0+\sigma_1-\sigma_2 \mod (Re \otimes M +M\otimes Re).$$
Therefore
$$2\, \partial \nu(x \otimes y) = (\sigma'_0+2\sigma_1)-(\sigma''_0 +2\sigma_2) \mod (Re \otimes M +M\otimes Re).$$  In terms of the bi-endomorphism $\zeta$ of~$M$, we have $$\sigma'_0+2\sigma_1= \overline P\zeta (x\otimes y) \quad {\text{and}} \quad \sigma''_0 +2\sigma_2=\overline P\zeta (y\otimes x).$$ Thus, modulo $Re \otimes M +M\otimes Re$, we have 
$$2\, \partial \nu(x \otimes y)  =\overline P\zeta (x\otimes y -y\otimes x)= \overline P \zeta \overline P (x\otimes y)=\delta(\zeta)( x \otimes y). $$ So, $\delta(\zeta)=2\, \partial \nu $ modulo $Re \otimes M +M\otimes Re$.

    \end{document}